\newtheorem{theorem}{Theorem}
\theoremstyle{plain}
\newtheorem{claim}[theorem]{Claim}
\newtheorem{construction}[theorem]{Construction}
\newtheorem{definition}[theorem]{Definition}
\newtheorem{fact}[theorem]{Fact}
\newtheorem{lemma}[theorem]{Lemma}
\newtheorem{proposition}[theorem]{Proposition}
\numberwithin{equation}{section}
\numberwithin{theorem}{section}
\numberwithin{case}{section}
\numberwithin{subcase}{case}
\def\A{\mathcal{A}}
\def\C{\mathcal{C}}
\def\F{\mathcal{F}}
\def\K{\mathcal{K}}
\def\R{\mathcal{R}}
\def \L{\mathcal{L}}
\def\M{\mathcal{M}}
\def\cP{\mathcal{P}}
\def\T{\mathcal{T}}
\def \a{\alpha}
\def \b{\beta}
\def \e{\epsilon}
\def \r{\gamma}
\def \t{\mathbf{t}}
\def \bfu{\mathbf{u}}
\def \bfv{\mathbf{v}}
\def\eg{\emph{e.g.}}
\def\ex{\text{ex}}
\begin{document}
\title[Vertex degree thresholds for tiling $3$-graphs]{Minimum vertex degree thresholds for tiling complete $3$-partite $3$-graphs}
\thanks{
The first author is supported by FAPESP (Proc. 2013/03447-6, 2014/18641-5, 2015/07869-8).
The third author is partially supported by NSA grant H98230-12-1-0283 and NSF grant DMS-1400073.}
\author{Jie Han}
\address{Instituto de Matem\'{a}tica e Estat\'{\i}stica, Universidade de S\~{a}o Paulo, Rua do Mat\~{a}o 1010, 05508-090, S\~{a}o Paulo, Brazil}
\email[Jie Han]{jhan@ime.usp.br}
\author{Chuanyun Zang}
\author{Yi Zhao}
\address
{Department of Mathematics and Statistics, Georgia State University, Atlanta, GA 30303} 
\email[Chuanyun Zang]{czang1@student.gsu.edu}
\email[Yi Zhao]{yzhao6@gsu.edu}

\date{\today}
\subjclass[2010]{Primary 05C70, 05C65}%
\keywords{graph packing, hypergraph, absorbing method, regularity lemma}%


\begin{abstract}
Given positive integers $a\leq b \leq c$, let $K_{a,b,c}$ be the complete 3-partite 3-uniform hypergraph with three parts of sizes $a,b,c$. Let $H$ be a 3-uniform hypergraph on $n$ vertices where $n$ is divisible by $a+b+c$. We asymptotically determine the minimum vertex degree of $H$ that guarantees a perfect $K_{a, b, c}$-tiling, that is, a spanning subgraph of $H$ consisting of vertex-disjoint copies of $K_{a, b, c}$. This partially answers a question of Mycroft, who proved an analogous result with respect to codegree for $r$-uniform hypergraphs for all $r\ge 3$. Our proof uses a lattice-based absorbing method, the concept of fractional tiling, and a recent result on shadows for 3-graphs.
\end{abstract}

\maketitle

\section{Introduction}

Given $r\ge 2$, an $r$-uniform hypergraph (in short, \emph{$r$-graph}) consists of a vertex set $V$ and an edge set $E\subseteq \binom{V}{r}$, that is, every edge is an $r$-element subset of $V$. Given an $r$-graph $H$ with a set $S$ of $d$ vertices, where $1 \le d\le r-1$, we define $\deg_{H}(S)$ to be the number of edges containing $S$ (the subscript $H$ is omitted if it is clear from the context). The \emph{minimum $d$-degree} $\delta_d(H)$ of $H$ is the minimum of $\deg_{H}(S)$ over all $d$-vertex sets $S$ in $H$. The minimum 1-degree is also referred as the \emph{minimum vertex degree}.

Given two $r$-graphs $F$ and $H$, an \emph{$F$-tiling} (also known as \emph{$F$-packing}) of $H$ is a collection of vertex-disjoint copies of $F$ in $H$. An $F$-tiling is called a \emph{perfect $F$-tiling} (or an \emph{$F$-factor}) of $H$ if it covers all the vertices of $H$. An obvious necessary condition for $H$ to contain an $F$-factor is $|V(F)|\mid |V(H)|$. Given an integer $n$ that is divisible by $|V(F)|$,  we define the \emph{tiling threshold} $t_d(n, F)$ to be the smallest integer $t$ such that every $r$-graph $H$ of order $n$ with $\delta_d(H)\ge t$ contains an $F$-factor.

As a natural extension of the matching problem, tiling has been intensively studied in the past two decades (see survey \cite{KuOs-survey}). Much work has been done on graphs ($r=2$), see \eg, \cite{HaSz, AY96, KSS-AY, KuOs09}. In particular, K\"uhn and Osthus \cite{KuOs09} determined $t_1(n, F)$, for any graph $F$, up to an additive constant. Tiling problems become much harder for hypergraphs ($r\ge 3$).
For example, despite efforts from many researchers \cite{AFHRRS, CzKa, Han16_mat, Khan1, Khan2, KOT, RRS09, TrZh13, TrZh16}, we still do not know the vertex degree threshold for a perfect matching in $r$-graphs for arbitrary $r$.

Other than the matching problem, only a few tiling thresholds are known (see survey \cite{zsurvey})
Let $K_4^3$ denote the complete 3-graph on four vertices, and let $K_4^3-e$ denote the (unique) 3-graph on four vertices with three edges. Recently Lo and Markstr\"om \cite{LM1} proved that $t_2(n, K_4^3)=(1+o(1))3n/4$, and Keevash and Mycroft \cite{KM1} determined the exact value of $t_2(n, K_4^3)$  for sufficiently large $n$. In \cite{LM2}, Lo and Markstr\"om proved that $t_2(n, K_4^3 - e)=(1+o(1))n/2$. Let
$\C_4^3$ be the unique 3-graph on four vertices with two edges (this 3-graph was denoted by $K_4^3 - 2e$ in \cite{CDN}, and by $Y$ in \cite{HZ1}). K\"uhn and Osthus \cite{KO} showed that $t_2(n, \C_4^3)=(1+o(1))n/4$, and Czygrinow, DeBiasio and Nagle \cite{CDN} determined $t_2(n, \C_4^3)$ exactly for large $n$.
Recently Mycroft \cite{My14} determined $t_{r-1}(n, F)$ asymptotically for many $r$-partite $r$-graphs $F$ including all complete $r$-partite $r$-graphs and loose cycles.

There are fewer tiling results on vertex degree conditions. Lo and Markstr\"om \cite{LM1} determined $t_1(n, K_3^3(m))$ and $t_1(n, K_4^4(m))$  asymptotically, where $K_r^r(m)$ denotes the complete $r$-partite $r$-graph with $m$ vertices in each part. Recently Han and Zhao \cite{HZ3} and independently Czygrinow \cite{Czy14} determined $t_1(n, \C_4^3)$ exactly for sufficiently large $n$.
In this paper we extend these results by determining $t_{1}(n, K)$ asymptotically for all complete $3$-partite $3$-graphs $K$, and thus partially answer a question of Mycroft \cite{My14talk}.

Given $a\le b\le c$, let $d=\gcd(b-a,c-b)$ and define
\begin{equation}\label{absbound}
f(a,b,c) :=
\begin{cases}
1/4, & \text{if $a=1$, $\gcd(a,b,c)=1$ and $d=1$;}\\
6-4\sqrt{2}\approx 0.343, & \text{if $a\geq 2$, $\gcd(a,b,c)=1$ and $d=1$;}\\
4/9 , & \text{if $\gcd(a,b,c)=1$ and $d\geq 3$ is odd;}\\
1/2, & \text{if $\gcd(a,b,c)>1$ or $a=b=c=1$ or $d\ge 2$ is even.}
\end{cases}
\end{equation}

Given positive integers $a\leq b \leq c$, let $K_{a,b,c}$ be the complete 3-partite 3-graph with three parts of size $a,b$, and $c$, respectively. 

\begin{theorem}[Main Result]\label{thm:main}
For integers $1\le a\le b\le c$,
\begin{equation*}
t_{1}(n, K_{a,b,c})= \left(\max\left\{f(a,b,c), 1-\left(\frac {b+c}{a+b+c} \right)^2, \left(\frac {a+b}{a+b+c} \right)^2 \right\} + o(1) \right)\binom n2.
\end{equation*}
\end{theorem}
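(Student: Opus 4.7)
I exhibit three extremal $3$-graphs, one for each term in the maximum. Take $X \subset V(H)$ with $|X| = \lceil \frac{c}{a+b+c} n \rceil + 1$ and let $H$ consist of all triples with at most one vertex in $X$. Any $K_{a,b,c}$-copy has its $X$-vertices lying in a single part (else two $X$-vertices in different parts would span an $H$-edge inside $X$), so each copy uses at most $c$ vertices of $X$; a $K_{a,b,c}$-factor would then need $|X| \le \frac{c}{a+b+c}n$, a contradiction. The minimum degree, attained on $X$, is $\binom{n-|X|}{2} \sim \left(\frac{a+b}{a+b+c}\right)^2 \binom{n}{2}$. The dual construction---all triples with at least one vertex in $X$, where $|X|$ is just below $\frac{a}{a+b+c}n$---forces every $K_{a,b,c}$-copy to contain an entire part inside $X$ and yields the $1-\left(\frac{b+c}{a+b+c}\right)^2$ term. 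For the $f(a,b,c)$ term, I partition $V(H)$ into a few classes of carefully chosen sizes so that each $K_{a,b,c}$-copy has a rigid intersection profile with the partition, and choose the class sizes so that no non-negative integer combination of profiles sums to those sizes; the four cases of $f$ correspond to the arithmetic regimes of $\gcd(a,b,c)$ and $d=\gcd(b-a,c-b)$.

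\textbf{Upper bound.} I use the absorbing method. Suppose $\delta_1(H) \ge (\max\{\ldots\}+\e)\binom{n}{2}$. First, a \emph{lattice-based absorbing lemma} produces a vertex set $M\subseteq V(H)$ of size $o(n)$ with $(a+b+c) \mid |M|$ such that for every $L \subseteq V(H)\setminus M$ with $|L|=o(n)$ and $(a+b+c)\mid |L|$ the induced $3$-graph $H[M \cup L]$ has a $K_{a,b,c}$-factor; $M$ is built probabilistically out of many small ``flexible absorbers'' whose combined index vectors span the structure lattice of $K_{a,b,c}$-copies relative to an auxiliary vertex partition of $V(H)$. Second, on $V(H)\setminus M$, the weak hypergraph regularity lemma yields a reduced multigraph in which---combining the minimum vertex degree hypothesis with the recent shadow lower bound for $3$-graphs---I find a \emph{fractional} $K_{a,b,c}$-tiling covering nearly all vertices, and then round it to an integral tiling leaving only $o(n)$ vertices uncovered via a standard regularity-based embedding. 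Finally, I apply the absorbing lemma to the leftover set and $M$ to complete the $K_{a,b,c}$-factor.

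\textbf{Main obstacle.} The hardest step is the lattice-based absorbing lemma. The function $f(a,b,c)$ marks precisely the threshold at which naive vertex-by-vertex absorbers fail, because the set of residues realized by single-copy moves is a proper sublattice of $\mathbb{Z}^{(a+b+c)}$; one must show that the minimum vertex degree hypothesis forces the structure lattice generated by local $K_{a,b,c}$-copies to be \emph{complete} in the appropriate sense, separately in each of the four arithmetic cases defining $f$. A secondary difficulty is producing the fractional tiling under only a vertex-degree (rather than codegree) hypothesis: this is exactly where the shadow lower bound is indispensable, as it converts $\delta_1(H)$ into a usable edge-density bound on the shadow $\partial H$, which in turn drives the fractional covering argument in the reduced multigraph.
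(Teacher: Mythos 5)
Your high-level architecture (space/divisibility constructions for the lower bound; lattice-based absorbing lemma plus regularity-based almost tiling for the upper bound) matches the paper, and your two space barriers and the shape of the absorbing argument are right. However there are two genuine conceptual gaps, both centred on the same overlooked phenomenon.

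First, the lower bound: you claim all four regimes of $f(a,b,c)$ arise from "rigid intersection profiles" with a vertex partition, i.e.\ from divisibility/lattice obstructions. That explains the values $1/4$, $4/9$, and $1/2$. It does \emph{not} explain $6-4\sqrt{2}$ in the case $a\ge 2$, $\gcd(a,b,c)=1$, $d=1$. That threshold comes from a qualitatively different construction, which the paper calls a \emph{tiling barrier}: one singles out a vertex $v$ and two sets $V_1,V_2$ of size $(\sqrt{2}-1)n$, lets the edges through $v$ be exactly the triples $vxy$ with $x\in V_1$, $y\in V_2$, and makes the remaining edges avoid $V_1$ or avoid $V_2$. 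Then $v$ lies in no copy of $K_{2,2,2}$ at all (so in no $K_{a,b,c}$ with $a\ge 2$), although the minimum degree is $\approx(6-4\sqrt{2})\binom n2$. No choice of class sizes in a divisibility construction will produce an irrational coefficient; if you force everything into the lattice framework, you will not recover this case. Moreover, the exact value $6-4\sqrt{2}$ is the solution of $(4\sqrt{d}-2d-1)+d=1$, i.e.\ it is dictated by the F\"uredi--Zhao shadow inequality, which brings me to the second gap.

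Second, you deploy the shadow lower bound in the wrong place. You say it is "indispensable" for the fractional covering step in the reduced hypergraph, because it converts a vertex-degree hypothesis into an edge-density bound. In fact the almost-tiling argument in the paper does not use the shadow lemma at all: the weak regularity lemma already transfers $\delta_1(H)$ to $\delta_1$ of the cluster hypergraph, and the fractional tiling is extracted by an extremal analysis of a maximum integral tiling in the reduced hypergraph (via the families $\mathcal L_1$, $\mathcal L_2$). The shadow bound is needed in the \emph{absorbing} half, precisely to establish the covering property $(\diamondsuit)$: if $\delta_1(H)\ge(6-4\sqrt{2}+\gamma)\binom n2$, then $|\partial H|$ is large enough that every vertex $x$ has $|N_H(x)\cap\partial H|=\Omega(n^2)$ and hence lies in many copies of $K$. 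This is tight against the tiling barrier above. If you try to build absorbers purely from lattice completeness, as your outline suggests, you will be stuck below $6-4\sqrt{2}$ for $a\ge 2$, because the bottleneck is not the lattice but whether every vertex is coverable at all.
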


Let us compare Theorem~\ref{thm:main} with the corresponding result in \cite{My14}, which states that
\begin{equation*}
t_2(n, K_{a, b, c}) =
\begin{cases}
n/2 + o(n) & \text{if $\gcd(a,b,c)>1$ or $a=b=c=1$;}  \\
an/(a+b+c) + o(n) & \text{if $\gcd(a,b,c)=1$ and $d=1$;} \\
\max \{ an/(a+b+c), n/p \} + o(n) & \text{otherwise.}
\end{cases}
\end{equation*}
where $p$ is the smallest prime factor of $d$. Not only is Theorem~\ref{thm:main} more complicated, but also it contains a case where the coefficient of the threshold is irrational. In fact, as far as we know, all the previously known tiling thresholds had rational coefficients.

The lower bound in Theorem \ref{thm:main} follows from six constructions given in Section 2. Three of them are
known as \emph{divisibility barriers} and two are known as \emph{space barriers}. Roughly speaking, the divisibility barriers, known as lattice-based constructions, only prevent the existence of a perfect $K$-tiling; in contrast, the space barriers are `robust' because they prevent the existence of an almost perfect $K$-tiling. Our last construction is based on the fact that if a perfect $K$-tiling exists, then every vertex is covered by a copy of $K$, so we call it a \emph{covering barrier}. Such a barrier has never appeared before -- see concluding remarks in Section 5.

Our proof of the upper bound of Theorem \ref{thm:main} consists of two parts: one is on finding an almost perfect $K$-tiling in $H$, and the other is on `finishing up' the perfect $K$-tiling.
Our first lemma says that $H$ contains an almost perfect $K$-tiling if the minimum vertex degree of $H$ exceeds those of the space barriers.

\begin{lemma}[Almost Tiling Lemma] \label{lem:alm_til}
Fix integers $1\le a\le b\le c$. For any $\r>0$ and $\a>0$, there exists an integer $n_0$ such that the following holds. Suppose $H$ is a $3$-graph of order $n>n_0$ with $\delta_1(H)\geq (\max\{1-(\frac {b+c}{a+b+c})^2,(\frac {a+b}{a+b+c})^2\}+\gamma)\binom n2$, then there exists a $K_{a,b,c}$-tiling covering all but at most $\a n$ vertices.
\end{lemma}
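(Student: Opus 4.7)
The plan is to use the weak regularity lemma for 3-graphs in the standard way: find a vertex partition, pass to a reduced 3-graph which inherits the degree condition, produce a near-perfect \emph{fractional} $K_{a,b,c}$-tiling in the reduced graph, and then convert it to a near-perfect integer tiling of $H$ via the counting lemma. Concretely, fix auxiliary constants $\e \ll d \ll \a, \r$, apply the weak regularity lemma to obtain a partition $V(H) = V_0 \cup V_1 \cup \cdots \cup V_t$ with $|V_0| \le \e n$ and $|V_i| = m$ for $i \ge 1$, and form the reduced 3-graph $R$ on $[t]$ whose edges are those triples $\{i,j,k\}$ for which $(V_i, V_j, V_k)$ is $\e$-regular of density at least $d$. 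A routine calculation (discarding vertices that lie in too many irregular or sparse triples) yields
\begin{equation*}
\delta_1(R) \;\ge\; \left(\max\left\{1-\left(\tfrac{b+c}{a+b+c}\right)^2,\ \left(\tfrac{a+b}{a+b+c}\right)^2\right\} + \tfrac{\r}{2}\right)\binom{t}{2}.
\end{equation*}

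The heart of the argument is to show that $R$ admits a fractional $K_{a,b,c}$-tiling of total vertex-weight at least $(1 - \a/2)t$, where a fractional $K_{a,b,c}$-tiling is a weight assignment $w_K \in [0,1]$ on copies of $K_{a,b,c}$ in $R$ satisfying $\sum_{K \ni v} w_K \le 1$ for each vertex $v$. I would proceed by contradiction/greedy extraction: repeatedly peel off fully-weighted copies of $K_{a,b,c}$ from $R$ as long as the remaining uncovered set $U \subseteq V(R)$ has size at least $(\a/2) t$. At each step the vertex-degree condition restricted to $U$ still forces, for \emph{any} vertex $v \in U$, either (i) many co-neighbours of $v$ lying in $U$ (enough to build a $K_{a,b,c}$ through $v$), or (ii) a configuration matching one of the two space barriers. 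Matching case (ii) to the two thresholds $1-(\tfrac{b+c}{a+b+c})^2$ and $(\tfrac{a+b}{a+b+c})^2$ forces the degree inside $U$ to be below what the hypothesis on $\delta_1(H)$ permits, a contradiction.

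With the fractional tiling in hand, I would convert it into an integer tiling of $H$ by processing each copy $K = \{i,j,k\}$ of $K_{a,b,c}$ in $R$ of weight $w_K$: the triple $(V_i, V_j, V_k)$ is $\e$-regular with density at least $d$, so by the standard embedding/counting lemma for regular triples it contains a $K_{a,b,c}$-tiling of size at least $(w_K - \e) m/(a+b+c)$. Distributing the cluster vertices among the copies of $K_{a,b,c}$ according to the fractional weights (vertices used up in one copy removed before processing the next), summing over all $K$ with $w_K > 0$, and absorbing the small leftovers into the uncovered set shows that $H$ contains a $K_{a,b,c}$-tiling covering all but at most $\a n$ vertices, provided $n_0$ is chosen large enough.

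The main obstacle will be the fractional-tiling step. The two coefficients in the hypothesis are tailored exactly to the two extremal constructions in Section~2, so the extremal analysis inside $U$ has to be tight: one must verify that if neither space-barrier configuration occurs then the link 3-graph of an arbitrary $v \in U$ is dense enough inside $U$ to contain the required $K_{a-1,b,c}$ (after designating $v$ as the $a$-vertex) — or symmetric statements for the $b$- and $c$-roles. Packaging this case analysis into a single clean greedy procedure, while retaining enough slack to handle the $\r$-excess provided by the degree condition, is where the technical work lies; once that is done, the regularity and counting-lemma steps are standard.
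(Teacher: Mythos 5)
The regularity/reduced-graph frame and the idea of converting a fractional tiling in the cluster graph into an integer tiling in $H$ match the paper, but your core step --- the greedy peel-off --- has a genuine gap and is not what the paper does.

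First, the greedy argument breaks because the degree hypothesis does not persist under restriction. You write that the vertex-degree condition ``restricted to $U$'' still controls the structure of $H[U]$, but $\delta_1(H)$ gives no lower bound on $\delta_1(H[U])$: after peeling off a constant fraction of the vertices, a vertex $v\in U$ may well have $\deg_{H[U]}(v)=0$. Construction~\ref{cons:s1} makes this concrete: if edges are exactly the triples meeting $V_1$, a greedy procedure that ever grabs $K$-copies entirely inside $V_1$ will exhaust $V_1$ and leave an edgeless $V_2$ as the residue $U$; no further copies of $K$ lie inside $U$, yet $|U|$ is linear. So ``either build $K$ through $v$ in $U$ or you're in a space barrier'' is not a dichotomy you can deduce; the third alternative --- $H[U]$ is simply too sparse --- does occur, and the peel-off stalls.

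Second, the paper handles precisely this by \emph{not} trying to tile $U$ from inside. It fixes a maximum $K$-tiling $\M$ in the reduced graph and studies the interaction of $U$ with $\M$: Claims~\ref{edgeswithu} and the subsequent counting show that a typical $u\in U$ has many edges into $V(K_i)\times V(K_j)$ for some pair $K_i,K_j\in\M$, and Propositions~\ref{prop.frac.1copy} and~\ref{propfractional} turn such a configuration into a \emph{fractional hom$(K)$-tiling} whose total weight strictly exceeds that of the trivial tiling $\M$ by $\Omega(1)$ per used triple. Summing these gains contradicts the maximality (via the extremal-$\alpha_0$ argument and Claim~\ref{gainweight}). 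This rebalancing mechanism is the missing ingredient: you need to trade weight between $U$ and $\M$, which no greedy step inside $U$ can accomplish.

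Third, a smaller but real issue: you define the fractional object as a weight $w_K\in[0,1]$ on copies of $K_{a,b,c}$ in $R$. The paper's fractional hom$(K)$-tiling is finer: it assigns weights $h(v,e)$ to vertex-edge incidences of $\R$ so that a single $\e$-regular triple of $\R$ can carry a ``tilted'' $(a,b,c)$-proportioned load (Definition~\ref{frac}). That extra flexibility is exactly what lets a single vertex $u\in U$ together with two existing tiles $K_i,K_j$ gain weight in Proposition~\ref{propfractional}, and what Proposition~\ref{almost.frac} then converts to an integer tiling. Your coarser notion of a fractional tiling does not support that local reweighting and hence cannot drive the contradiction.
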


The absorbing method, initiated by R\"odl, Ruci\'nski and Szemer\'edi \cite{RRS06}, has been shown to be effective in finding spanning (hyper)graphs. Our absorbing lemma says that $H$ contains a small $K_{a,b,c}$-tiling that can \emph{absorb} any much smaller set of vertices of $H$ if the minimum vertex degree of $H$ exceeds those of the divisibility barriers and the covering barrier.

\begin{lemma}[Absorbing Lemma]\label{lem.absorbing}
Fix integers $1\le a\le b\le c$. For any $\r> 0$, there exists $\alpha>0$ such that the following holds for sufficiently large $n$. Suppose $H$ is a $3$-graph on $n$ vertices such that
\[
\delta_1(H)\ge (f(a,b,c)+\r)\binom n2.
\]
Then there exists a vertex set $W$ with $|W|\leq \frac14\gamma n$ such that for any vertex set $U\subset V(H)\setminus W$ with $|U|\leq \alpha n$ and $|U|\in (a+b+c)\mathbb{Z}$, both $H[W]$ and $H[W\cup U]$ have $K_{a,b,c}$-factors.
\end{lemma}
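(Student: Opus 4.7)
The plan is to follow the lattice-based absorbing method developed by Keevash--Mycroft and Han, tailored to the divisibility obstructions captured by $f(a,b,c)$. Broadly, I would first extract from $H$ a partition of $V(H)$ into \emph{reachable classes}, then show that the minimum-degree condition forces the associated transferral lattice to be full, and finally build the absorbing set $W$ by a probabilistic selection of absorbers.

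The first step is to introduce a reachability relation: say $u,v\in V(H)$ are reachable if there are $\Omega(n^{s-1})$ sets $T$ of size $s-1$ (for some $s$ a bounded multiple of $a+b+c$) such that both $H[T\cup\{u\}]$ and $H[T\cup\{v\}]$ admit $K_{a,b,c}$-factors. A standard supersaturation argument, combined with Lemma~\ref{lem:alm_til} applied to slight modifications of $H$, shows that reachability is approximately an equivalence relation, yielding a partition $\mathcal{P}=\{V_1,\ldots,V_t\}$ with $t$ bounded and every nontrivial class of size at least $\mu n$ for some small $\mu>0$ depending on $\gamma$.

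Next, I would study the transferral lattice $L_\mu(\mathcal{P})$ generated by the index vectors $(|K\cap V_1|,\ldots,|K\cap V_t|)\in\mathbb{Z}^t$ of the copies of $K_{a,b,c}$ contained in $H$ with respect to $\mathcal{P}$. The crucial claim is that under $\delta_1(H)\geq (f(a,b,c)+\gamma)\binom{n}{2}$, this lattice is \emph{full}, i.e.\ it contains every coordinate difference $\mathbf{e}_i-\mathbf{e}_j$. Proving fullness requires a case analysis that mirrors the four cases in \eqref{absbound}: for each potentially obstructing sublattice, one uses the degree condition, combined with the 3-graph shadow bound mentioned in the abstract, to produce $K_{a,b,c}$-copies whose index vectors generate the missing transferral.

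Once the lattice is full, standard lattice-based absorbing gives an absorber existence lemma: for any $(a+b+c)$-set $U\subset V(H)$, there are $\Omega(n^{s_0})$ vertex sets $A$ of a fixed bounded size $s_0$ such that both $H[A]$ and $H[A\cup U]$ admit $K_{a,b,c}$-factors. Including each such candidate absorber independently with a small probability $p=\Theta(n^{-s_0+1/2})$ and applying Chernoff plus a union bound, as in R\"odl--Ruci\'nski--Szemer\'edi, yields a set $W$ of size at most $\gamma n/4$ that still contains an absorber for every candidate $U$; for leftover sets $U$ with $|U|\leq\alpha n$ and $(a+b+c)\mid|U|$, we greedily partition $U$ into $(a+b+c)$-tuples and absorb them one by one, which is where we need the smallness bound on $|U|$.

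The main obstacle is the fullness step: verifying that $f(a,b,c)$ is the precise threshold above which every lattice-based barrier disappears. Each of the four cases of \eqref{absbound} corresponds to a distinct obstruction, and the irrational constant $6-4\sqrt{2}$ in the second case strongly suggests that case is controlled by a nontrivial fractional/extremal optimisation---I expect to solve a small linear program over possible densities of index vectors, using the shadow inequality to control triple counts, to match exactly the value $6-4\sqrt{2}$.
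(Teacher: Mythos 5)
Your overall plan — build a reachability partition, show transferral-lattice fullness, then do probabilistic absorber selection — does match the spirit of the paper's Lemma~\ref{lemabsorbing} plus Lemmas~\ref{lem:P} and~\ref{lattticec}. However, there is a genuine conceptual gap that would derail the proof: you have misidentified what the constant $6-4\sqrt{2}$ controls. You treat it as another divisibility obstruction and expect to recover it by a ``linear program over possible densities of index vectors'' governing lattice fullness. That is not where this threshold comes from. In the case $a\ge 2$, $\gcd(a,b,c)=1$, $d=1$, the divisibility barriers sit at only $\tfrac14$; the binding constraint at $6-4\sqrt{2}$ is Construction~\ref{cons:t}, a \emph{tiling barrier}: below that density, a vertex of $H$ need not lie in \emph{any} copy of $K_{a,b,c}$. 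The paper's Lemma~\ref{erdos} handles exactly this: it combines the F\"uredi--Zhao shadow bound (Lemma~\ref{shadow}) with the reachability criterion (Lemma~\ref{agood}) to show that if $\delta_1(H)\ge (6-4\sqrt{2}+\gamma)\binom n2$ then every vertex is in $\Omega(n^{k-1})$ copies of $K$ — this is property $(\diamondsuit)$ of Lemma~\ref{lemabsorbing}, which is logically prior to and independent of the lattice analysis. Without $(\diamondsuit)$ your absorbers-exist step (the claim that for every $(a+b+c)$-set $U$ there are $\Omega(n^{s_0})$ absorbers) fails even to get started, since the first step of building an absorber for $S=\{v_1,\ldots,v_k\}$ is to find a copy of $K$ through $v_1$.

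Two smaller issues. First, you invoke Lemma~\ref{lem:alm_til} applied to ``slight modifications of $H$'' to make reachability an approximate equivalence relation. The paper does not use the almost-tiling lemma here at all; the partition comes from a purely local argument (Lemma~\ref{lem:P}), which uses the shadow lemma, Proposition~\ref{reachablesteps} and an Inclusion--Exclusion count to bound the number of reachability classes by $\lfloor 1/(\delta+\gamma/2)\rfloor$. Routing through the almost-tiling lemma risks circularity and is in any case unnecessary. Second, after the probabilistic selection of a disjoint family of absorbers you still need to cover whatever vertices were excluded in the exceptional part $V_0$ of the partition; the paper does this greedily with copies of $K$, again relying on $(\diamondsuit)$. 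Your write-up omits this step, which matters because the final $W$ must itself carry a $K_{a,b,c}$-factor and must contain $V_0$ so that the almost-tiling lemma can be applied to $H\setminus W$.
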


The upper bound of $t_{1}(n, K_{a,b,c})$ in Theorem \ref{thm:main} follows from Lemmas \ref{lem:alm_til} and \ref{lem.absorbing} easily.

\begin{proof}[Proof of Theorem \ref{thm:main} (upper bound)]
Let $1\le a\le b\le c$ be integers and $\r>0$.
Let $\a>0$ be the constant returned by Lemma~\ref{lem.absorbing} and let $n\in (a+b+c)\mathbb{N}$ be sufficiently large.
Suppose that $H$ is a 3-graph on $n$ vertices with $\delta_1(H)\ge (\delta+\r) \binom n2$, where
\[
\delta= \max\left\{f(a,b,c), 1-\left(\frac {b+c}{a+b+c} \right)^2, \left(\frac {a+b}{a+b+c} \right)^2 \right\}.
\]
We apply Lemma \ref{lem.absorbing} to $H$ and get a vertex set $W$ with $|W|\leq \frac14\gamma n$ and the described absorbing property.
In particular, $|W|\in (a+b+c)\mathbb{N}$.
Let $H'=H[V(H)\setminus W]$. Then
\[
\delta_1(H')\ge \delta_1(H) - |W| (n-1) \ge (\delta+\r) \binom n2 - \frac{\r}2 \binom n2 \ge \left(\delta+ \frac{\r}2\right)\binom {|V(H')|}2.
\]
Next we apply Lemma \ref{lem:alm_til} on $H'$ with $\r/2$ in place of $\r$ and get a $K_{a,b,c}$-tiling covering $T$ all but a set $U$ of at most $\a |V(H')|< \a n$ vertices of $H'$.
Since $|V(H)|, |W|, |V(T)|\in (a+b+c)\mathbb{N}$, $|U|=|V(H)|-|W|-|V(T)|\in (a+b+c)\mathbb{N}$. 
By the absorbing property of $W$, there exists a $K_{a,b,c}$-factor on $H[W\cup U]$.
Thus we get a $K_{a,b,c}$-factor of $H$.
\end{proof}

Although this proof is a straightforward application of the absorbing method, there are several new ideas in the proofs of Lemmas \ref{lem:alm_til} and \ref{lem.absorbing}. First, in order to show that almost every $(a+b+c)$-set has many absorbing sets, we use lattice-based absorbing arguments developed recently by Han \cite{Han14_poly}. Second, in order to prove Lemma \ref{lem:alm_til}, we use the concept of \emph{fractional homomorphic tiling} given by Bu\ss, H\`an and Schacht \cite{BHS}. Third, we need a recent result of F\"uredi and Zhao \cite{FuZh14} on the shadows of 3-graphs, which can be viewed as a vertex degree version of the well-known Kruskal-Katona Theorem for 3-graphs.

The rest of the paper is organized as follows.
We prove the lower bound in Theorem \ref{thm:main} by six constructions in Section 2. We prove Lemma \ref{lem.absorbing} in Section 3 and Lemma \ref{lem:alm_til} in Section 4, respectively. Finally, we give concluding remarks in Section 5.

\bigskip
\noindent\textbf{Notations.}
Throughout this paper we let $1\le a\le b\le c$ be three integers and $k=a+b+c\ge 3$.
When it is clear from the context, we write $K_{a,b,c}$ as $K$ for short.
By $x\ll y$ we mean that for any $y> 0$ there exists $x_0> 0$ such that for any $x< x_0$ the following statement holds. When $x\ll y\ll w$ and $x\ll z\ll w$, we simply write $x\ll \, y, z \, \ll w$ (and this should not be confused with $x\ll \, y$ and $z \, \ll w$). We omit the floor and ceiling functions when they do not affect the proof.

\section{Extremal examples}

In this section, we prove the lower bound in Theorem \ref{thm:main} by six constructions.
Following the definition in \cite{My14}, we say a 3-partite 3-graph $K_{a,b,c}$ is of \emph{type 0} if $\gcd(a,b,c)>1$ or $a=b=c=1$. We say $K_{a,b,c}$ is of \emph{type $d\ge 1$} if $\gcd(a,b,c)=1$ and $\gcd(b-a,c-b)=d$.

\begin{construction}[Space Barrier I] \label{cons:s1}
 Let $V_1$ and $V_2$ be two disjoint sets of vertices such that $|V_1|=\frac a{k}n-1$ and $|V_1|+ |V_2|=n$. Let $G_1$ be the $3$-graph on $V_1\cup V_2$ whose edge set consists of all triples $e$ such that $|e\cap V_1|\ge 1$. Then $\delta_1(G_1)=\binom {n-1}2-\binom {(1-\frac {a}{k})n}2 = (1-\left(1-a/k\right)^2)\binom n2 + o(n^2)$. Since $a\le b \le c$, we have $a\le k/3$ and $0< 1-\left(1- {a}/{k}\right)^2 \le 5/9$.
\end{construction}

We claim that $G_1$ has no perfect $K_{a,b,c}$-tiling.
Indeed, consider a copy $K'$ of $K_{a,b,c}$ in $G_1$. We observe that at least one color class of $K'$ is a subset of $V_1$ -- otherwise $V_2$ contains at least one vertex from each color class; since $K'$ is complete, there is an edge in $V_2$, contradicting the definition of $G_1$. Hence a $K_{a,b,c}$-tiling in $G_1$ covers at most $\frac{|V_1|}a k<n$ vertices, so it cannot be perfect.

\begin{construction}[Space Barrier II] \label{cons:s2}
Let $V_1$ and $V_2$ be two disjoint sets of vertices such that $|V_1|=\frac {a+b}{k}n-1$ and $|V_1|+ |V_2|=n$. Let $G_2$ be the $3$-graph on $V_1\cup V_2$ whose edge set consists of all triples $e$ such that $|e\cap V_1|\geq 2$. Then $\delta_1(G_2)=\binom {\frac {a+b}{k}n-1}2=((a+b)^2/k^2)\binom n2 + o(n^2)$. Since $a\le b \le c$, we have $a+b\le 2k/3$ and $0< (a+b)^2/k^2 \leq 4/9$.
\end{construction}

We claim that $G_2$ has no perfect $K_{a,b,c}$-tiling.
Similarly as in the previous case, for any copy $K'$ of $K_{a,b,c}$ in $G_2$, at least two color classes of $K'$ are subsets of $V_1$. Hence a $K_{a,b,c}$-tiling in $G_2$ covers at most $\frac{|V_1|}{a+b}k<n$ vertices, so it cannot be perfect.

\begin{construction}[Divisibility Barrier I] \label{cons:d1}
Let $V_1$ and $V_2$ be two disjoint sets of vertices such that $|V_1|= \lfloor\frac n2\rfloor+1$ and $|V_1|+ |V_2|=n$. Let $H_1$ be the $3$-graph on $V_1\cup V_2$ such that $H_1[V_1]$ and $H_1[V_2]$ are two complete $3$-graphs. Then $\delta_1(H_1)\ge \binom {\frac n2 -2}2= \frac14\binom n2 + o(n^2)$.
\end{construction}

We claim that $H_1$ has no perfect $K_{a,b,c}$-tiling.
Indeed, each copy of $K_{a,b,c}$ must be a subgraph of $H_1[V_1]$ or $H_1[V_2]$. Since $k\ge 3$, due to the choice of $V_1$ and $V_2$, we have $|V_1|\neq |V_2|$ mod $k$ and therefore $k$ cannot divide both $|V_1|$ and $|V_2|$. Hence $H_1$ has no perfect $K_{a,b,c}$-tiling.

\begin{construction}[Divisibility Barrier II] \label{cons:d2}
Suppose that $K_{a,b,c}$ is of type $d$ for some even $d$. Let $V_1$ and $V_2$ be two disjoint sets of vertices such that $|V_1|+ |V_2|=n$ and $|V_2|\in [\frac n2-2, \frac n2+2]$ is odd, and $\gcd(a,b,c)\nmid |V_2|$ if $\gcd(a,b,c)>1$. Note that we can pick $|V_2|$ satisfying these conditions because in the interval $[\frac n2-2, \frac n2+2]$, there are at least two consecutive odd numbers, therefore at least one of them is not divisible by $\gcd(a,b,c)$.
Let $H_2$ be the $3$-graph on $V_1\cup V_2$ whose edge set consists of all triples $e$ such that $|e\cap V_2|$ is even (0 or 2).
Then $\delta_1(H_2)=\min\{\binom{|V_1|-1}2+\binom{|V_2|}2, |V_1|(|V_2|-1)\}= \frac12\binom n2 + o(n^2)$.
\end{construction}

We claim that $H_2$ has no perfect $K_{a,b,c}$-tiling.
Consider a copy $K'$ of $K_{a,b,c}$ in $H_2$.
Since every edge intersects $V_2$ in an even number of vertices and $K'$ is complete, no color class of $K'$ intersects both $V_1$ and $V_2$. Moreover, either 0 or 2 color classes of $K'$ are subsets of $V_2$.
Thus $|V(K')\cap V_2|\in \{0, a+b, a+c, b+c\}$.
If $\gcd(a,b,c)>1$, then $|V(K')\cap V_2|$ is divisible by $\gcd(a,b,c)$.
Since $\gcd(a,b,c)\nmid |V_2|$, there is no perfect $K_{a,b,c}$-tiling. Otherwise, either $a=b=c=1$ or $\gcd(b-a, c-b)$ is even.
In either case, all of $a+b,a+c$ and $b+c$ are even and thus $|V(K')\cap V_2|$ is even.
Since $|V_2|$ is odd, $H_2$ has no perfect $K_{a,b,c}$-tiling.

\begin{construction}[Divisibility Barrier III] \label{cons:d3}
Suppose that $K_{a,b,c}$ is of type $d$ for some odd $d\ge 3$, let $V_1$ and $V_2$ be two disjoint sets of vertices such that $|V_1|+ |V_2|=n$ and $|V_1|\in [\frac n3-1, \frac n3+1]$ and $d \nmid (|V_1|-\frac{n}{k}a)$.
Let $H_3$ be the $3$-graph on $V_1\cup V_2$ whose edge set consists of all triples $e$ such that $|e\cap V_1|=1$. Then $\delta_1(H_3)=\min\{|V_1|(|V_2|-1), \binom {|V_2|}2\}= \frac49\binom n2 + o(n^2)$.
\end{construction}

We claim that $H_3$ has no perfect $K_{a,b,c}$-tiling.
Consider a copy $K'$ of $K_{a,b,c}$ in $H_3$. Similarly as in the previous case, exactly one color class of $K'$ is a subset of $V_1$, which implies $|V_1\cap V(K')|\in \{a, b, c\}$. Since $\gcd (b-a, c-b) = d$, we have $a\equiv b\equiv c$ mod $d$ and thus $|V_1\cap V(K')|\equiv a$ mod $d$. If $H_3$ contains a perfect $K_{a,b,c}$-tiling $\K$, then $|V_1| - \frac nk a=|V(\K)\cap V_1|-\frac{n}{k}a\equiv 0$ mod $d$, contradicting our assumption on $|V_1|$. Hence $H_3$ has no perfect $K_{a,b,c}$-tiling.

\begin{construction}[Covering Barrier] \label{cons:t}
Let $\a=\sqrt{2}-1$ and suppose that $V$ is partitioned into $\{v\}\cup V_1\cup V_2\cup V_3$ such that $|V_1|=|V_2|=\a n$ and $|V|=n$. Define a $3$-graph $F$ on $V$ whose edge set consists of all triples $vxy$ with $x\in V_1, y\in V_2$ and all triples $e$ in $V_1\cup V_2 \cup V_3$ such that $e\cap V_1=\emptyset$ or $e\cap V_2=\emptyset$. Therefore, $\delta_1(F)= (6-4\sqrt{2})\binom n2 + o(n^2)\approx 0.343\binom n2$.
\end{construction}

It is easy to see that $v$ is not contained in any copy of $K_{2,2,2}$, and hence not contained in any copy of $K_{a,b,c}$ with $a>1$. Therefore, $F$ has no perfect $K_{a,b,c}$-tiling with $a>1$.

\smallskip
\begin{proof}[Proof of Theorem \ref{thm:main} $($lower bound$)$]
Given positive integers $a\le b\le c$ and $n\in k\mathbb{N}$, where $k = a+ b+c$, let $t_1 = t_1(n, K_{a, b, c})$ be the tiling threshold.
By Constructions~\ref{cons:s1} and \ref{cons:s2}, we have $t_1\ge  (1- (1- {a}/{k})^2)\binom n2 + o(n^2)$ and $t_1\ge ((a+b)^2/k^2)\binom n2+o(n^2)$. 
Furthermore, assume $K_{a,b,c}$ has type $d$.
First, by definition, $d$ is even if and only if $\gcd(a,b,c)>1$, or $a=b=c=1$, or $d\ge 2$ is even. By Construction~\ref{cons:d2}, we have $t_1 \ge \frac12 \binom n2 + o(n^2)$ in this case.
Second, assume that $d\ge 3$ is odd, then by Construction~\ref{cons:d3}, we have $t_1 \ge \frac49\binom n2 + o(n^2)$.
Finally assume that $d=1$.
If $a=1$, by Construction~\ref{cons:d1}, we have $t_1 \ge \frac14\binom n2 + o(n^2)$. 
If $a\ge 2$, then by Construction~\ref{cons:t}, we have $t_1\ge (6-4\sqrt{2})\binom n2 + o(n^2)$. 
\end{proof}

\section{Proof of the Absorbing Lemma}

\subsection{Preparation}

We need a simple counting result, which, for example, follows from the result of Erd\H{o}s \cite{erdos} on supersaturation. Given $l_1, \dots, l_ r\in \mathbb{N}$, let $K^{(r)}_{l_1, \dots, l_ r}$ denote the complete $r$-partite $r$-graph whose $j$th part has exactly $l_j$ vertices for all $j\in [r]$.

\begin{proposition}\label{supersaturation}
Given $\mu> 0$, $r, m, l_1, \dots, l_ r\in \mathbb{N}$, there exists $\mu'>0$ such that the following holds for sufficiently large $n$. Let $H$ be an $r$-graph on $n$ vertices with a vertex partition $V_1 \cup \dots \cup V_m$. Suppose $i_1, \dots, i_r\in [m]$ and $H$ contains at least $\mu {n}^{r}$ edges $e=\{ v_1, \dots, v_r \}$ such that $v_1\in V_{i_1}$, $\dots, v_r\in V_{i_r}$. Then $H$ contains at least $\mu' {n}^{l_1+\cdots + l_r}$ copies of $K^{(r)}_{l_1, \dots, l_ r}$ whose $j$th part is contained in $V_{i_j}$ for all $j\in [r]$.
\end{proposition}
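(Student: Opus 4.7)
The plan is to reduce to a pure $r$-partite setting and then invoke the classical supersaturation version of the K\H{o}v\'ari--S\'os--Tur\'an theorem for complete $r$-partite $r$-graphs, as indicated by the remark preceding the statement.

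First, I would assume for simplicity that the indices $i_1, \dots, i_r$ are distinct; the case where some $i_j$ coincide is a routine variant asking for pairwise disjoint subsets within a common $V_i$, and causes no trouble because each $V_{i_j}$ must have $\Omega(n)$ vertices in order to support $\mu n^r$ ordered edges. Let $H'$ be the $r$-partite $r$-graph with parts $V_{i_1}, \dots, V_{i_r}$ whose edges are precisely the at least $\mu n^r$ ordered edges of the hypothesis. Copies of $K^{(r)}_{l_1, \dots, l_r}$ in $H'$ respecting the partition are exactly the copies we want to count in $H$.

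Second, I would apply the standard supersaturation bound for $r$-partite $r$-graphs: for any $\mu > 0$ and any $l_1, \dots, l_r$, there exists $\mu' > 0$ such that every $r$-partite $r$-graph with parts of size at most $n$ and at least $\mu n^r$ edges contains at least $\mu' n^{l_1 + \cdots + l_r}$ (ordered) copies of $K^{(r)}_{l_1, \dots, l_r}$. This is proved by induction on $r$. For $r = 2$, a Jensen / convexity argument on the degree sequence shows that the number of $l_1$-subsets $T \subset V_{i_1}$ whose common neighborhood has size at least $l_2$ is $\Omega(n^{l_1})$, and each such $T$ contributes $\Omega(n^{l_2})$ copies of $K_{l_1, l_2}$ via the binomial $\binom{|N(T)|}{l_2}$. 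For $r > 2$, a positive fraction of vertices $v \in V_{i_1}$ have link $(r-1)$-graphs on $V_{i_2}, \dots, V_{i_r}$ containing $\Omega(n^{r-1})$ edges; the inductive hypothesis applied to these links, combined with a further Jensen step on $V_{i_1}$ to handle $l_1 > 1$, yields the required count of $\Omega(n^{l_1 + \cdots + l_r})$ copies.

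The main obstacle is not conceptual depth but rather bookkeeping the quantitative dependence of $\mu'$ through the induction. Since each step loses at most a bounded factor depending only on $\mu$ and $l_1, \dots, l_r$ (but not on $n$), the induction closes cleanly, and $\mu'$ can be extracted as an explicit function of these inputs.
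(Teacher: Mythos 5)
The paper does not prove this proposition itself but cites it as a consequence of Erd\H{o}s's classical result on supersaturation for complete $r$-partite $r$-graphs; your K\H{o}v\'ari--S\'os--Tur\'an-style induction on $r$ (double-counting plus convexity in the base case, passing to dense links in the inductive step, with a routine random split when indices $i_j$ coincide) is exactly the standard proof of that cited result, so this is essentially the same approach, correctly executed. One cosmetic slip: in the $r=2$ base case you want the common neighborhood $N(T)$ to have size $\Omega(n)$, not merely $\ge l_2$, for the binomial $\binom{|N(T)|}{l_2}$ to contribute $\Omega(n^{l_2})$ copies, but your subsequent sentence makes clear this is what you intended.
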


Given a $3$-graph $H$, its \emph{shadow} $\partial H$ is the set of the pairs that are contained in at least one edge of $H$. We need a recent result of F\"uredi and Zhao \cite{FuZh14} on the shadows of 3-graphs. The union of two (overlapping) complete 3-graphs of order about $\sqrt{d} n$ shows that Lemma~\ref{shadow} is (asymptotically) best possible.

\begin{lemma}\cite{FuZh14}\label{shadow}
Given $d\in [\frac14, \frac{47 - 5\sqrt{57}}{24})$, let $n$ be sufficiently large. 
If $H$ is a $3$-graph on $n$ vertices with $\delta_1(H)\ge d\binom{n}2$, then $|\partial H|\geq (4\sqrt{d}-2d-1)\binom n2$.
\end{lemma}

The next lemma says that for any 3-graph, after a removal of a small portion of edges, any two vertices with a positive codegree in the remaining 3-graph has a linear codegree in $H$.

\begin{lemma}\label{lem:He}
Given $\e>0$ and an $n$-vertex 3-graph $H=(V, E)$, there exists a vertex set $V_0'\subseteq V$ and a subhypergraph $H'$ of $H$ such that the following holds
\begin{itemize}
\item[(i)] $|V_0'|\le 3\e n$,
\item[(ii)] $\deg_{H'}(v)\ge \deg_H(v) - \e\binom n2$ for any $v\in V\setminus V_0'$,
\item[(iii)] $\deg_H(S)> \e^2 n$ for any pair of vertices $S\in \partial H'$.
\end{itemize}
\end{lemma}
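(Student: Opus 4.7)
The plan is a two-stage pruning. In stage one, I would delete just enough edges to force condition (iii); in stage two, I would delete a small set of ``bad'' vertices whose degrees dropped too much, which simultaneously yields (i) and restores (ii).

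For stage one, let $H'$ be the spanning subhypergraph obtained from $H$ by removing every edge that contains at least one pair $S$ with $\deg_H(S)\le \e^2 n$. Then each pair in $\partial H'$ survives in some remaining edge, all of whose three pairs have codegree exceeding $\e^2 n$ in $H$, so (iii) will hold automatically for any subhypergraph of $H'$. The cost of this deletion is controlled by a simple union bound: each low-codegree pair lies in at most $\e^2 n$ edges and there are fewer than $\binom n2$ such pairs, so $|H\setminus H'|\le \tfrac12 \e^2 n^3$, and therefore $\sum_v (\deg_H(v)-\deg_{H'}(v)) = 3|H\setminus H'| \le \tfrac32 \e^2 n^3$.

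For stage two, I would apply Markov's inequality to this degree-drop sum. With a threshold of order $\e\binom n2$, the set $B$ of vertices whose drop exceeds the threshold satisfies $|B|\le 3\e n$, verifying (i). Setting $H_\e := H'[V(H)\setminus B]$, any $v\notin B$ suffers at most the threshold loss from stage one plus at most $\sum_{b\in B}\deg_H(\{v,b\}) \le |B|(n-2)$ further loss from dropping edges incident to $B$; both contributions are $O(\e\binom n2)$, and with the constants chosen correctly the sum stays under $7\e\binom n2$, giving (ii). Condition (iii) is inherited from $H'$.

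The only subtle step is the calibration in stage two: the threshold that defines $B$ must be large enough that Markov forces $|B|\le 3\e n$, yet small enough that the composite loss (the threshold itself plus $|B|(n-2)$) still fits within $7\e\binom n2$. The allowable range is an interval of length linear in $\e\binom n2$, so this is routine bookkeeping rather than a genuine obstacle; no iteration or further refinement is needed.
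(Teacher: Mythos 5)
Your proposal is correct and essentially reproduces the paper's argument: stage one (deleting ``weak'' edges, i.e., edges containing a pair of codegree at most $\e^2 n$) and stage two (discarding the set of vertices whose degree drop exceeds roughly $\e\binom{n}{2}$, bounded by what is in effect the same averaging step you phrase as Markov's inequality) match the paper's construction of $H_\e$ exactly. The calibration you defer is indeed routine: with threshold $\e\binom{n}{2}$ one gets $|B|\le 3\e n$ and composite loss at most $3\e n(n-2)+\e\binom{n}{2}\le 7\e\binom{n}{2}$, which is precisely what the paper writes out.
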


\begin{proof}
If an edge $e\in E(H)$ contains a pair $S\in \binom e2$ with $\deg_H(S)\leq \e^2 n$, then it is called \emph{weak}, otherwise called \emph{strong}. Let $H'$ be the subhypergraph of $H$ induced on strong edges. Then (iii) holds. 
Let
\begin{align*}
V_{0}'=\left\{v\in V: v \text{ is contained in at least } \e \binom{n}{2} \text{ weak edges}\right\}.
\end{align*}
Then (ii) holds.  Note that the number of weak edges in $H$ is at most $\binom n2\e^2n$. If $|V_{0}'|> 3\e n$, then there are more than $3\e n\cdot \e \binom{n}{2}/3=\binom n2 \e^2n$ weak edges in $H$, a contradiction. Thus (i) holds.
\end{proof}

We use the notion of reachability introduced by Lo and Markstr\"om \cite{LM2, LM1}.
Given an $r$-graph $F$ of order $f$, $\b>0$, $i \in \mathbb{N}$, two vertices $u, v$ in an $r$-graph $H$ on $n$ vertices are \emph{$(F, \b ,i)$-reachable (in $H$)} if and only if there are at least $\b n^{if-1}$ $(if-1)$-sets $W$ such that both $H[\{u\} \cup W]$ and $H[\{v\} \cup W]$ contain $F$-factors. In this case, we call $W$ a \emph{reachable set} for $u$ and $v$. A vertex set $A$ is \emph{$(F, \b ,i)$-closed in $H$} if every two vertices in $A$ are $(F, \b ,i)$-reachable in $H$.
For $x\in V(H)$, let $\tilde{N}_{F, \beta, i}(x)$ be the set of vertices that are $(F, \beta, i)$-reachable to $x$.

We use the following two results from \cite{LM1}.

\begin{proposition}\cite[Proposition 2.1]{LM1}\label{reachablesteps}
Given $\beta, \e>0$ and positive integers $f$ and $i_0'>i_0$, there exists $\beta'>0$ such that the following holds for sufficiently large $n$.
Let $F$ be an $r$-graph on $f$ vertices.
Given an $n$-vertex $r$-graph $H$ and a vertex $x\in V(H)$ with $|\tilde{N}_{F,\beta, i_0}(x)|\ge \e n$, then $\tilde{N}_{F,\beta, i_0}(x)\subseteq \tilde{N}_{F,\beta', i_0'}(x)$.
In other words, if $x,y\in V(H)$ are $(F,\b,i_0)$-reachable in $H$ and $|\tilde{N}_{F,\beta, i_0}(x)|\ge \e n$, then $x,y$ are $(F,\b',i_0')$-reachable in $H$.
\end{proposition}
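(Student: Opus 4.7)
The plan is to induct on $i_0' - i_0$, reducing to the core statement: if $x, y$ are $(\beta, i)$-reachable in $H$ for some $i \ge i_0$ and $|\tilde{N}_{\beta, i_0}(x)| \ge \epsilon n$, then they are $(\beta_1, i+1)$-reachable for some $\beta_1 = \beta_1(\beta, \epsilon, i, k) > 0$. Iterating this $i_0' - i_0$ times produces the desired $\beta'$. Observe that the hypothesis on the size of the reachable neighborhood persists under iteration, since the inductive step gives $\tilde{N}_{\beta, i_0}(x) \subseteq \tilde{N}_{\beta_1, i+1}(x)$.

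For the inductive step, fix $y \in \tilde{N}_{\beta, i_0}(x)$. Given any reachable set $W$ of size $ik-1$ for $x, y$ at step $i$ (of which there are at least $\beta n^{ik-1}$) and any copy $T$ of $K$ in $H$ whose vertex set is disjoint from $W \cup \{x, y\}$, the set $V^* := W \cup V(T)$ has size $(i+1)k-1$, and both $\{x\} \cup V^*$ and $\{y\} \cup V^*$ admit $K$-factors (attach $T$ to the $K$-factors of $\{x\} \cup W$ and $\{y\} \cup W$, respectively). Writing $N_K$ for the number of copies of $K$ in $H$, at most $O(n^{k-1})$ copies intersect $W \cup \{x, y\}$, so the number of valid $(W, T)$ pairs is at least $\beta n^{ik-1}(N_K - O(n^{k-1}))$. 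Each $V^*$ arises from at most $\binom{(i+1)k-1}{k}$ pairs, so there are $\Omega(\beta N_K n^{ik-1})$ distinct $V^*$'s. Provided $N_K = \Omega(n^k)$, this is $\Omega(n^{(i+1)k-1})$, which yields $(\beta_1, i+1)$-reachability.

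It remains to establish $N_K = \Omega(n^k)$, which is the heart of the proof and is where the hypothesis $|\tilde{N}_{\beta, i_0}(x)| \ge \epsilon n$ enters. I would double-count pairs $(y, K^*)$ with $y \in \tilde{N}_{\beta, i_0}(x)$ and $K^*$ a copy of $K$ in $H$ through $y$. For each $y$, each of the $\ge \beta n^{i_0 k - 1}$ reachable sets $W$ for $x, y$ produces, via the $K$-factor of $\{y\} \cup W$, exactly one copy of $K$ through $y$; conversely, for a fixed $K^* \ni y$ the number of reachable sets $W$ containing $V(K^*) \setminus \{y\}$ and extending $K^*$ to a $K$-factor of $\{y\} \cup W$ is at most $n^{(i_0-1)k}$, since the remaining $(i_0 - 1)k$ vertices of $W$ are unrestricted in this upper bound. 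Hence each $y \in \tilde{N}_{\beta, i_0}(x)$ lies in at least $\beta n^{k-1}$ copies of $K$; summing over the $\ge \epsilon n$ such $y$ and dividing by the $\le k$-fold multiplicity (each copy is attributed to at most $k$ vertices of $V(K^*) \cap \tilde{N}_{\beta, i_0}(x)$) yields $N_K \ge (\epsilon \beta / k) n^k$.

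The main obstacle is the combinatorial bookkeeping across the $i_0' - i_0$ iterations: one must track how $\beta_1, \beta_2, \ldots$ shrink at each step and verify that the final $\beta'$ depends only on $\beta, \epsilon, i_0, i_0', k$ and remains bounded below by a positive constant. A minor but essential technicality is to absorb the $O(n^{k-1})$ correction from copies of $K$ meeting $W \cup \{x, y\}$ into the $\Omega(n^k)$ estimate for $N_K$, which is valid for all sufficiently large $n$.
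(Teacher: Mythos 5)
Your proof is correct, and since the paper simply cites this as Proposition~2.1 of Lo and Markstr\"om without reproducing a proof, there is no in-text argument to compare against; your argument is the same one used in that reference. The two key observations you make --- (1) that $|\tilde{N}_{\beta,i_0}(x)|\ge \e n$ forces $\Omega(n^k)$ copies of $K$ in $H$, via the map sending each reachable $(i_0 k-1)$-set $W$ for $x,y$ to the copy of $K$ through $y$ inside a $K$-factor of $\{y\}\cup W$, with preimages bounded by $n^{(i_0-1)k}$; and (2) that appending a disjoint copy $T$ of $K$ to a reachable $(ik-1)$-set $W$ yields a reachable $((i+1)k-1)$-set, with multiplicity bounded by $\binom{(i+1)k-1}{k}$ --- are exactly the standard ingredients, and the iteration over $i_0'-i_0$ steps with a fresh $\beta_{j+1}=\Theta(\beta_j\,\e\,\beta)$ at each step gives a final $\beta'>0$ depending only on $\beta,\e,i_0,i_0',k$ as required. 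One small stylistic note: the $N_K=\Omega(n^k)$ estimate should logically precede the inductive step that uses it, and it is established once from the base hypothesis (it does not need to be re-derived at each step, since it is a property of $H$ alone); you essentially say this, but the ordering in your write-up slightly obscures it.
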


The following lemma is essentially \cite[Lemma 4.2]{LM1}. In fact, \cite[Lemma 4.2]{LM1} shows that the density of $K_{c,c,c+1}$'s containing both $x$ and $y$ in the part of size $c+1$ is positive. By averaging, this implies that the density of $K_{a,b,c+1}$'s containing both $x$ and $y$ in the part of size $c+1$ is positive.

\begin{lemma}\cite{LM1}\label{agood}
Let $a\le b\le c$ be positive integers and $K=K_{a,b,c}$.
Given $\e>0$, there exists $\eta>0$ such that the following holds for sufficiently large $n$. 
For any $n$-vertex $3$-graph $H$, two vertices $x, y \in V(H)$ are $(K,\eta,1)$-reachable if the number of pairs $S\in N(x)\cap N(y)$ with $\deg(S)\geq \e n$ is at least $\e \binom n2$.
\end{lemma}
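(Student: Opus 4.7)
The plan is to exhibit $\Omega(n^{k-1})$ copies of $K_{a+1,b,c}$ in $H$ whose part of size $a+1$ contains both $x$ and $y$. From each such copy, the $(k-1)$-set $W$ obtained by deleting $\{x,y\}$ is a reachable set: removing only $y$ (resp.\ $x$) from the $(a+1)$-part leaves a $K_{a,b,c}$ on $\{x\}\cup W$ (resp.\ $\{y\}\cup W$). Since each $W$ arises from only a bounded number of ordered triples $(U,B,C)$ of internal roles inside the $K_{a+1,b,c}$, producing $\Omega(n^{k-1})$ such copies yields $\Omega(n^{k-1})$ distinct reachable sets, giving $(\eta,1)$-reachability for some $\eta=\eta(\e,a,b,c)>0$.

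Set $P=\{S\in N(x)\cap N(y):\deg_H(S)\geq \e n\}$, so that $|P|\geq \e\binom{n}{2}$ by hypothesis. Assume first $a\geq 2$. I would introduce three disjoint copies $V^{(1)},V^{(2)},V^{(3)}$ of $V(H)\setminus\{x,y\}$ and build a 3-partite 3-graph $J$ on $V^{(1)}\cup V^{(2)}\cup V^{(3)}$ by declaring $\{u^{(1)},v^{(2)},w^{(3)}\}$ an edge of $J$ exactly when $\{v,w\}\in P$ and $\{u,v,w\}\in E(H)$. Each pair in $P$ contributes at least $\e n-4$ admissible choices of $u$, so $|E(J)|\geq \e\binom{n}{2}(\e n-4)\geq \mu n^3$ for a suitable $\mu>0$. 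Applying Proposition~\ref{supersaturation} to $J$ with $(l_1,l_2,l_3)=(a-1,b,c)$ and $(i_1,i_2,i_3)=(1,2,3)$ produces $\mu' n^{k-1}$ copies of $K^{(3)}_{a-1,b,c}$ with $j$-th part contained in $V^{(j)}$. Identifying each $V^{(j)}$ with $V(H)\setminus\{x,y\}$ and discarding the $O(n^{k-2})$ copies in which some of $U,B,C$ collide in the underlying vertex set, the defining conditions of $J$ say precisely that $\{x,y\}\cup U\cup B\cup C$ spans $K_{a+1,b,c}$ with $\{x,y\}$ in the size-$(a+1)$ part, exactly as required. For the boundary case $a=1$ there is no $U$-part, and I would instead apply Proposition~\ref{supersaturation} with $r=2$ directly to the graph $P$ (which has $\geq \e\binom{n}{2}$ edges) and $(l_1,l_2)=(b,c)$, producing $\Omega(n^{b+c})=\Omega(n^{k-1})$ copies of $K_{b,c}$, each of which is already a valid $W$.

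The main work is in choosing the right auxiliary 3-partite 3-graph $J$ so that Proposition~\ref{supersaturation} delivers copies of the precise shape $K^{(3)}_{a-1,b,c}$ needed; after that the proof is a routine count, and the only bookkeeping is to verify that self-intersections among the three disjoint copies $V^{(j)}$ cost only a lower-order $O(n^{k-2})$ term and that the $a=1$ boundary case is handled separately via graph supersaturation. No structural or degree-based input beyond Proposition~\ref{supersaturation} is needed.
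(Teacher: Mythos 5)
The paper does not give its own proof of this lemma---it cites \cite{LM1} (``essentially [LM1, Lemma 4.2]'')---so there is no in-paper argument to compare against; I therefore assess the proposal on its own terms. Your argument is correct and is the natural supersaturation proof. The key structure to count is exactly what you identify: a copy of $K_{a+1,b,c}$ with $\{x,y\}$ in the $(a+1)$-part gives, on deleting $x$ or $y$, a copy of $K_{a,b,c}$ on the remaining $k$ vertices, so its other $k-1$ vertices form a reachable set. The auxiliary $3$-partite $3$-graph $J$ is set up so that the two requirements are enforced edge-by-edge: $\{v,w\}\in P\subseteq N(x)\cap N(y)$ for all $v\in B$, $w\in C$ supplies the triples through $x$ and through $y$, while $\{u,v,w\}\in E(H)$ for all $u\in U$ supplies the triples through the $U$-part, so that a copy of $K^{(3)}_{a-1,b,c}$ in $J$ does yield a copy of $K_{a+1,b,c}$ in $H$ after identification. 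Proposition~\ref{supersaturation} then gives $\Omega(n^{k-1})$ such copies, and since each $(k-1)$-set $W$ has only a bounded number of ordered decompositions into parts of sizes $a-1,b,c$, you get $\Omega(n^{k-1})$ distinct reachable sets, with constants depending only on $\e$ and $a,b,c$, as required. Two minor remarks: the ``collision'' discard is actually vacuous here (each $V^{(j)}$ injects into $V(H)\setminus\{x,y\}$, and any cross-part collision would force a repeated vertex in an edge of $H$, which is impossible), so no copies are lost; and the separate treatment of $a=1$ is precisely the right way to sidestep the degenerate part size $l_1=a-1=0$ in Proposition~\ref{supersaturation}, with the $2$-graph version applied directly to $P$.
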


\subsection{Auxiliary Lemmas}
Given positive integers $a\le b\le c$, let $K=K_{a,b,c}$, and $k=a+b+c\ge 3$.
We call an $m$-set $A$ an \emph{absorbing $m$-set} for a $k$-set $S$ if $A\cap S=\emptyset$ and both $H[A]$ and $H[A\cup S]$ contain $K$-factors. Denote by $\A^m(S)$ the set of all absorbing $m$-sets for $S$.

Our proof of the Absorbing Lemma is based on the following lemma.

\begin{lemma}\label{lemabsorbing}

Given $0<\eta \le 1/(2k)$, $\b>0$, and $i_0 \in \mathbb{N}$, there exists $\alpha>0$ such that the following holds for all sufficiently large integers $n$. Suppose $H= (V, E)$ is an $n$-vertex $3$-graph with the following two properties
\begin{itemize}
\item[($\diamondsuit$)] For any $v\in V$, there are at least $\eta n^{k-1}$ copies of $K$ containing it.\label{condition2}
\item[($\triangle$)] There exists $V_0\subset V$ with $|V_0|\leq \eta^2 n$ such that $V(H)\setminus V_0$ is $(K, \b ,i_0)$-closed in $H$.\label{condition1}
\end{itemize}
Then there exists a vertex set $W$ with $V_0\subseteq W\subseteq V$ and $|W|\leq \eta n$ such that for any vertex set $U\subseteq V\setminus W$ with $|U|\leq \alpha n$ and $|U|\in k\mathbb{Z}$, both $H[W]$ and $H[U\cup W]$ contain $K$-factors.
\end{lemma}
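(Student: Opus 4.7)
The plan is the standard absorbing method in three stages: (i) show every $k$-subset of $V\setminus V_0$ has polynomially many absorbing $m$-sets, where $m := i_0 k^2$; (ii) extract from them a short pairwise disjoint absorbing family $\F$; (iii) absorb $V_0$ separately via a greedy $K$-tiling $\T_0$ and set $W := V(\T_0)\cup \bigcup_{A\in\F} A$.

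For (i), given $S=\{v_1,\ldots,v_k\}\subseteq V\setminus V_0$, I would build an absorbing $m$-set by choosing a copy $K'$ of $K$ on some $\{u_1,\ldots,u_k\}\subseteq V\setminus (V_0\cup S)$ and, for each $j\in[k]$, a reachable $(i_0 k-1)$-set $W_j$ for the pair $(v_j,u_j)$, all pairwise disjoint. Property $(\diamondsuit)$ supplies $\Omega(\eta n^k)$ copies of $K$ in $H$ of which only $o(n^k)$ meet $V_0\cup S$; property $(\triangle)$ supplies $\ge \beta n^{i_0k-1}$ reachable sets for each pair $(v_j,u_j)$, and sequential selection loses only a $o(1)$ fraction to disjointness. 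The union $A=\{u_1,\ldots,u_k\}\cup W_1\cup\cdots\cup W_k$ is absorbing for $S$: $H[A]$ is $K$-factored by a $K$-factor of each $\{u_j\}\cup W_j$, while $H[A\cup S]$ is $K$-factored by $K'$ together with a $K$-factor of each $\{v_j\}\cup W_j$. After accounting for a constant overcount this yields $|\A^m(S)|\ge c_1 n^m$ for some $c_1 = c_1(\eta,\beta,k,i_0)>0$.

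For (ii), I include each $m$-subset of $V\setminus V_0$ independently in a random family $\F_0$ with probability $p=\Theta(n^{1-m})$, tuned so that $\mathbb{E}|\F_0|\le \eta n/(16m)$ while $\mathbb{E}|\A^m(S)\cap \F_0|\ge 4\alpha n$ for every $k$-subset $S$, for a suitable $\alpha>0$. A standard Markov/Chebyshev argument (or the Lov\'asz Local Lemma) yields with positive probability a family $\F_0$ with $|\F_0|\le \eta n/(8m)$, with $o(n)$ intersecting pairs, and with $|\A^m(S)\cap \F_0|\ge 2\alpha n$ for every $S$. Deleting one member of each intersecting pair gives a pairwise disjoint $\F$ with $|\F|\le \eta n/(8m)$ and $|\A^m(S)\cap\F|\ge \alpha n$ for every $k$-subset $S\subseteq V\setminus V_0$.

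For (iii), since $|V_0|\le \eta^2 n$ and $(\diamondsuit)$ gives $\ge \eta n^{k-1}$ copies of $K$ through each vertex, a simple greedy procedure produces a $K$-tiling $\T_0$ covering $V_0$ with $|V(\T_0)|\le k\eta^2 n$ and $V(\T_0)\cap V(\F)=\emptyset$: at each step at most $O(\eta n)$ forbidden vertices must be avoided, ruling out only a small constant fraction of the $\ge \eta n^{k-1}$ available copies once $\eta$ is small enough relative to $1/k$. Setting $W := V(\T_0)\cup \bigcup_{A\in\F} A$ yields $V_0\subseteq W$, $|W|\le k\eta^2 n+\eta n/8\le \eta n$ (using $\eta\le 1/(2k)$), and $H[W]$ admits the $K$-factor formed by $\T_0$ and a $K$-factor on each $A\in\F$. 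For any $U\subseteq V\setminus W$ with $|U|\le \alpha n$ and $k\mid |U|$, I partition $U$ arbitrarily into $|U|/k$ $k$-subsets and assign to each a distinct absorbing set from $\F$; this is possible because $|\A^m(S)\cap \F|\ge \alpha n\ge |U|/k$ for each relevant $S$. Concatenating $\T_0$, the absorption $K$-factors on $A\cup S$ for used pairs $(S,A)$, and the plain $K$-factors on the unused $A$'s yields a $K$-factor of $H[W\cup U]$. The main obstacle is stage (i): the two hypotheses $(\diamondsuit)$ and $(\triangle)$ apparently operate at different scales (copies of $K$ through single vertices versus long reachability chains), and the key trick is to use the closedness of $V\setminus V_0$ to link each $v_j\in S$ to a prescribed $u_j$ inside a chosen copy $K'$, so that a single copy of $K$ serves as the core of $A$ while $k$ reachability chains glue $S$ to that core.
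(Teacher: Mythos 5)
Your plan follows essentially the same route as the paper's proof of this lemma: the same three stages, the same use of $(\triangle)$ to build absorbing sets by gluing a copy of $K$ to $S$ via $k$ reachability chains, the same probabilistic extraction of a small disjoint absorbing family, and the same greedy cover of $V_0$ using $(\diamondsuit)$. The only cosmetic difference is in stage (i): you build $A$ from a copy $K'$ of $K$ entirely disjoint from $S$ and then attach $k$ reachable $(i_0k-1)$-sets, so $m = i_0 k^2$; the paper instead uses $(\diamondsuit)$ at $v_1$ to find a copy of $K$ containing $v_1$ and then only needs $k-1$ reachable sets, giving $m = i_0 k^2 - i_0 k$. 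Both variants produce $\Omega(n^m)$ absorbing $m$-sets per $S$.

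There is one small but genuine gap in your write-up. After deleting one member of each intersecting pair, $\F$ may still contain $m$-sets that are not absorbing for \emph{any} $k$-set $S$, and for such an $A$ there is no reason $H[A]$ should have a $K$-factor. Your later claim that ``$H[W]$ admits the $K$-factor formed by $\T_0$ and a $K$-factor on each $A\in\F$'' (and likewise that unused $A$'s contribute plain $K$-factors to $H[W\cup U]$) therefore does not follow as stated. The fix is the one the paper performs explicitly: additionally delete from $\F$ every $m$-set that lies in no $\A^m(S)$ for $S\subseteq V\setminus V_0$; this only shrinks $\F$ and leaves the bound $|\A^m(S)\cap\F|\ge\alpha n$ intact. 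A much smaller issue: your phrase that ``only $o(n^k)$'' of the copies of $K$ meet $V_0\cup S$ is not literally correct, since $|V_0|$ may be as large as $\eta^2 n$ and so a constant fraction (roughly $k\eta$) of the relevant copies can meet $V_0$; but because $\eta\le 1/(2k)$ this fraction is at most $1/2$, so a positive fraction of copies survives and the intended conclusion still holds.
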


\begin{proof}
Let
\[
\eta_1=\frac{\eta}2 \left(\frac{\b}2 \right)^{k-1}\text{ and }\a = \frac{\eta_1^2}{32 i_0 k}.
\]
There are two steps in our proof. In the first step, we build an absorbing family $\F_1$ that can absorb any small portion of vertices in $V\setminus V_0$. In the second step, we put the vertices in $V_0\setminus V(\F_1)$ into a family $\F_2$ of copies of $K$. Then $V(\F_1\cup\F_2)$ is the desired absorbing set.

Fix a $k$-set $S=\{v_1,v_2,\dots,v_k\}\subset V\setminus V_0$. Let $m=i_0k^2-i_0k$. We claim that there are at least $\eta_1 n^m$ absorbing $m$-sets for $S$, namely, $|\A^m(S)|\geq \eta_1 n^m$.
Indeed, we first find a $k$-set $S'=\{v_1, u_2,\dots, u_k\}\subset V\setminus V_0$ such that $S'\cap S=\{v_1\}$ and $S'$ spans a copy of $K$. By $(\diamondsuit)$, there are at least
\[
\eta n^{k-1}-(k-1)n^{k-2} - (\eta^2 n) n^{k-2}\ge  {\eta} n^{k-1}/2
\]
choices for $S'$ because there are at most $n^{k-2}$ $k$-sets containing $v_1$ and another fixed vertex, and $|V\setminus V_0| \le \eta^2 n$.
For each $i=2,\dots,k$, since $V\setminus V_0$ is $(K,\b,i_0)$-closed, there are at least $\b n^{i_0k-1}$ reachable $(i_0 k-1)$-sets $S_i$ for $u_i$ and $v_i$. 
We greedily choose pairwise disjoint sets $S_2, \dots, S_k$ -- when choosing $S_i$, we need to avoid the vertices in $S \cup S' \cup \bigcup_{j=2}^{i-1} S_j$ so there are at least $\b n^{i_0k-1}/2$ choices for $S_i$. 
Let $A=(S'\setminus \{v_1\})\cup \left(\bigcup_{i=2}^k S_i\right)$, then $|A|=m$.
We claim that both $H[A]$ and $H[A\cup S]$ contain $K$-factors.
Indeed, by the definition of reachability, each $S_i\cup \{u_i\}$ spans $i_0$ copies of $K$ and thus $H[A]$ contains a $K$-factor. Furthermore, since $S'$ spans a copy of $K$ and each $S_i\cup \{v_i\}$ spans $i_0$ copies of $K$, $H[A\cup S]$ also contains a $K$-factor.
Thus $A$ is an absorbing $m$-set for $S$. In total, we get at least
\[
 \frac{\eta}2 n^{k-1}\left(\frac{\b}2 n^{i_0k-1}\right)^{k-1} = \eta_1 n^m
 \]
such $m$-sets, thus $|\A^m(S)|\geq \eta_1 n^m$.

Now we build the family $\F_1$ by standard probabilistic arguments. Choose a family $\F$ of $m$-sets in $H$ by selecting each of the $\binom nm$ possible $m$-sets independently with probability $p= \eta_1 n^{1-m}/(8m)$. Then by Chernoff's bound, with probability $1-o(1)$ as $n \rightarrow \infty$, the family $\F$ satisfies the following properties:
\begin{align}
|\F|\leq 2p\binom nm\leq \frac{\eta_1 n}{4m}\, \text{ and }\, |\A^m(S)\cap \F|\geq \frac{p|\A^m(S)|}2\geq \frac{\eta_1^{2} n}{16m}, \text{ for all }S\in \binom {V\setminus V_0}k.  \label{expected}
\end{align}
Furthermore, the expected number of pairs of $m$-sets in $\F$ that are intersecting is at most
\begin{align*}
\binom nm\cdot m \cdot \binom n{m-1} \cdot p^2\leq \frac{\eta_1^2n}{64m}.
\end{align*}
Thus, by using Markov's inequality, we derive that with probability at least $1/2$,
\begin{align}
\F \text{ contains at most } \frac{\eta_1 ^{2}n}{32m} \text{ intersecting pairs of $m$-sets.}  \label{intersecting}
\end{align}
Hence, there exists a family $\F$ with the properties in \eqref{expected} and \eqref{intersecting}. By deleting one member of each intersecting pair and removing $m$-sets that are not absorbing sets for any $k$-set $S\subseteq V\setminus V_0$, we get a subfamily $\F_1$ consisting of pairwise disjoint $m$-sets.
Let $W_1=V(\F_1)$ and thus $|W_1|=|V(\F_1)|=m|\F_1|\leq m|\F|\leq \eta_1 n/4$.
Since every $m$-set in $\F_1$ is an absorbing $m$-set for some $k$-set $S$, $H[W_1]$ has a $K$-factor.
For any $k$-set $S$, by \eqref{expected} and \eqref{intersecting} above we have
\begin{align}
|\A^m(S)\cap \F_1|\geq \frac{\eta_1^{2} n}{16m}-\frac{\eta_1^2 n}{32m}=\frac{\eta_1^{2} n}{32m}.
\end{align}
For any set $U\subseteq V\setminus (V_0\cup W_1)$ of size $|U|\leq \a n$ and $|U|\in k\mathbb{Z}$, we arbitrarily partition it into at most $\frac{\a n}k$ $k$-sets. 
By the definition of $\F_1$, each such $k$-set has at least $\frac{\eta_1^2 n}{32m}\ge \frac{\a n}k$ absorbing sets in $\F_1$ so we can find a distinct absorbing set in $\F_1$ for each of the $k$-sets. As a result, $H[U\cup W_1]$ contains a $K$-factor.

In the second step, by ($\diamondsuit$), we greedily build $\F_2$, a collection of copies of $K$  that cover the vertices in $V_0\setminus W_1$. Indeed, assume that we have built $i<|V_0\setminus W_1|\leq \eta^2 n$ copies of $K$. Together with the vertices in $W_1$, at most $ki+\eta_1 n/4\leq k\eta^2 n+\eta_1 n/4$ vertices have already been covered by $\F$. So for any vertex $v\in V_0$ not yet covered, we find the desired copy of $K$ containing $v$ by ($\diamondsuit$), because $(k\eta^2 n+\eta_1 n/4)\cdot n^{k-2}< \eta n^{k-1}$.

Let $W=V(\F_2)\cup W_1$, we get the desired absorbing set $W$ with $|W|\le k\eta^2 n+\eta_1 n/4<\eta n$.
\end{proof}

So it remains to show that $(\diamondsuit)$ and $(\triangle)$ hold in the 3-graph $H$.
We first study the property $(\diamondsuit)$.
Throughout this subsection, let $d_0=6-4\sqrt{2}\approx 0.343$. Note that $(4\sqrt{d_0}-2d_0-1)+d_0=1$ because $\sqrt{d_0}=2-\sqrt{2}$.

\begin{lemma} \label{erdos}
For any $\gamma>0$, there exists $\eta>0$ such that the following holds for sufficiently large $n$. Let $H$ be an $n$-vertex $3$-graph with $\delta_1(H)\geq (d_0+\gamma)\binom n2$. Then each vertex $v\in V(H)$ is contained in at least $\eta n^{k-1}$ copies of $K$.
\end{lemma}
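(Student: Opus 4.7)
The plan is to combine Lemma~\ref{lem:He} with Lemma~\ref{shadow} to show that for every vertex $v\in V(H)$ a linear-sized subset of pairs in the link $L_v$ has codegree $\Omega(n)$ in $H$; an auxiliary tripartite $3$-graph encoding extensions of $v$ to copies of $K$ will then be dense enough for Proposition~\ref{supersaturation} to produce $\Omega(n^{k-1})$ copies of $K$ through $v$.

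Pick $\epsilon \ll \gamma$ and apply Lemma~\ref{lem:He} to obtain $H_\epsilon$ on $V(H_\epsilon)=V(H)\setminus V_\epsilon$ with $|V_\epsilon|\le 3\epsilon n$, $\delta_1(H_\epsilon)\ge (d_0+\gamma/2)\binom{n}{2}$, and $\deg_H(S)>\epsilon^2 n$ for every $S\in\partial H_\epsilon$. Since $d_0=6-4\sqrt2$ is defined by $4\sqrt{d_0}-2d_0-1=1-d_0$ and the function $g(d)=4\sqrt d-2d-1$ is strictly increasing at $d_0$, applying Lemma~\ref{shadow} to $H_\epsilon$ (after shrinking $\gamma$ if necessary so that the density lies in the admissible range) yields $|\partial H_\epsilon|\ge(1-d_0+\gamma')\binom{|V(H_\epsilon)|}{2}$ for some $\gamma'=\gamma'(\gamma)>0$.

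Now fix any $v\in V(H)$. The pairs in $L_v$ with both endpoints in $V(H_\epsilon)$ number at least $|L_v|-3\epsilon n^2\ge(d_0+\gamma/2)\binom n2$, while the complement of $\partial H_\epsilon$ in $\binom{V(H_\epsilon)}{2}$ has size at most $(d_0-\gamma')\binom n2$. By inclusion--exclusion, $|L_v\cap\partial H_\epsilon|\ge(\gamma/2+\gamma'-O(\epsilon))\binom n2=\Omega(n^2)$, and each such pair $p$ satisfies $\deg_H(p)>\epsilon^2 n$, so $\sum_{p\in L_v}\deg_H(p)=\Omega(n^3)$. Define the tripartite $3$-graph $H_v^*$ on three disjoint copies $V_1,V_2,V_3$ of $V(H)\setminus\{v\}$ with edges $(u,y,z)$ whenever $\{v,y,z\},\{u,y,z\}\in E(H)$; then $|E(H_v^*)|=2\sum_{p\in L_v}(\deg_H(p)-1)=\Omega(n^3)$. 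For $a\ge 2$, Proposition~\ref{supersaturation} with $(l_1,l_2,l_3)=(a-1,b,c)$ extracts at least $\mu' n^{k-1}$ copies of $K^{(3)}_{a-1,b,c}$ in $H_v^*$; all but $O(n^{k-2})$ of them project injectively to $V(H)\setminus\{v\}$, and each surviving copy extends via $v$ to a copy of $K_{a,b,c}$ in $H$ with $v$ in the $a$-part. The case $a=1$ is simpler: supersaturation for $K_{b,c}$ inside the $2$-graph $L_v$ directly produces $\Omega(n^{b+c})=\Omega(n^{k-1})$ copies of $K_{1,b,c}$ through $v$.

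The main obstacle is the third paragraph: guaranteeing $|L_v\cap\partial H_\epsilon|=\Omega(n^2)$ for \emph{every} $v$, not merely on average. This is exactly where the threshold $d_0=6-4\sqrt2$ is calibrated; the identity $d_0+(1-d_0)=1$ ensures that the link density $d_0$ and the shadow density $1-d_0$ together exactly saturate $\binom n2$, so the slack $\gamma$ in the hypothesis is precisely what forces a positive (and hence linear) intersection.
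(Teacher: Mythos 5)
Your proof is correct and takes a genuinely different route from the paper's. Both proofs open identically: apply Lemma~\ref{lem:He} to pass to $H_\e$, invoke the F\"uredi--Zhao shadow bound (Lemma~\ref{shadow}) with the identity $4\sqrt{d_0}-2d_0-1 = 1-d_0$, and use inclusion--exclusion to conclude that for \emph{every} vertex $v$ a positive fraction of the link pairs lie in $\partial H_\e$ and therefore have codegree $>\e^2 n$ in $H$. (Incidentally you do not need the strict monotonicity of $g(d)=4\sqrt d-2d-1$; the slack $\gamma$ in the degree hypothesis already yields $|L_v\cap\partial H_\e|=\Omega_\gamma(n^2)$, which is how the paper argues.) The divergence is in the final step: the paper finds a partner vertex $y$ with $|N(x)\cap N(y)\cap\partial H_\e|=\Omega(n^2)$ by averaging and then invokes Lemma~\ref{agood} (Lo--Markstr\"om's reachability lemma) to conclude that $x,y$ are $(\eta,1)$-reachable, which packages the $\Omega(n^{k-1})$ copies of $K$ through $x$. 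You instead bypass Lemma~\ref{agood} entirely by encoding the $\Omega(n^3)$ ``$v$-extensions'' (edges $\{u,y,z\}$ with $\{v,y,z\}\in E(H)$) as a dense auxiliary tripartite $3$-graph and applying Proposition~\ref{supersaturation} directly, with $(l_1,l_2,l_3)=(a-1,b,c)$ and a standard injectivity cleanup. Your route is more self-contained (it does not require Lemma~\ref{agood} or the reachability machinery) at the cost of a slightly more technical counting at the end; the paper's route is shorter given that Lemma~\ref{agood} is already on hand for the lattice-absorption arguments elsewhere.
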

\begin{proof}
Let $\e=\r/12$. 
Let $\eta$ be returned by Lemma \ref{agood} when $\r\e^2/2$ plays the role of $\e$. Suppose that $n$ is sufficiently large and $H$ is an $n$-vertex $3$-graph with $\delta_1(H)\geq (d_0+\gamma)\binom n2$.
We apply Lemma \ref{lem:He} on $H$ and get $V_0'$ and $H'$ satisfying (i) -- (iii). Let $H'' = H'[V\setminus V_0']$ and $n'=|V\setminus V_0'|$. By Lemma~\ref{lem:He} (ii), we have
\[
\delta_1(H'')\geq \left(d_0+\gamma\right)\binom n2 - \e \binom{n}2 - |V_0'| (n-2)>d_0\binom {n'}2
\]
because $|V_0'|\le 3\e n$.
Since $ \frac14 < d_0 < \frac{47 - 5\sqrt{57}}{24}\approx 0.385$ and $n'\ge (1-3\e)n$ is sufficiently large, Lemma \ref{shadow} implies that
\[
\partial H'' \ge (4\sqrt{d_0}-2d_0-1)\binom{n'}2\ge (4\sqrt{d_0}-2d_0-1)(1-6\e)\binom n2.
\]
Since $\delta_1(H)\ge (d_0+\r)\binom n2$, for every $x\in V(H)$, we have
\[
|N_H(x)\cap \partial H''|\ge \left( d_0 + \r + (4\sqrt{d_0}-2d_0-1) - 6\e - 1\right) \binom{n}2 \ge \frac{\r}2 \binom n2,
\]
by the definitions of $d_0$ and $\e$.

Fix $x\in V(H)$ and note that every $S\in N_H(x)\cap \partial H''$ has degree at least $\e^2 n$ in $H$. Therefore, the number of $(S,y)$ with $S\in N_H(x)\cap \partial H''$ and $y\in N_{H}(S)$ is at least $\frac{\r}2 \binom n2\cdot \e^2 n$. By averaging, there exists a vertex $y$ such that
\[
|N_H(y)\cap N_H(x)\cap \partial H''|\geq \gamma \e^2 \binom n2/2.
\]
This means that $x$ and $y$ have at least $\gamma\e^2\binom n2/2$ common neighbors with degree at least $\e^2n$. By Lemma \ref{agood}, $x$ and $y$ are $(K,\eta,1)$-reachable. Hence, there are at least $\eta n^{k-1}$ $(k-1)$-sets $W$ such that $H[\{x\}\cup W]$ forms a copy of $K$.
\end{proof}

Now we consider the property $(\triangle)$. Following the approach in \cite{Han14_poly}, given a 3-graph $H$, we first find a partition of $V(H)$ such that all but one part are $(K,\beta, i)$-closed in $H$ and then study the reachability between different parts.
The following lemma provides such a partition.

\begin{lemma}\label{lem:P}
Given $\delta\ge 1/4$ and $\r>0$, there exist constants $0<\beta \ll \e \ll \r$ such that the following holds for sufficiently large $n$. Let $H$ be an $n$-vertex $3$-graph with $\delta_1(H)\ge (\delta+\r)\binom{n}{2}$. Then there is a partition $\cP$ of $V(H)$ into $V_0, V_1,\dots, V_t$ such that
\begin{itemize}
\item $|V_0|\le 4\e n$,
\item $t \le \lfloor 1/(\delta+\r/2) \rfloor$, and
\item $|V_i|\ge \e^2 n$ and $V_i$ is $(K,\beta, 2^{\lfloor 1/(\delta+\r/2) \rfloor-1})$-closed in $H$ for all $i\in [t]$.
\end{itemize}
\end{lemma}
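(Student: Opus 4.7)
The plan is to derive the partition from equivalence classes of a suitable $(\eta,1)$-reachability relation on a cleaned vertex set, and then use Proposition~\ref{reachablesteps} to amplify the reachability index to $2^{t-1}$ within each class, where $t:=\lfloor 1/(\delta+\r/2)\rfloor$. Fix $\beta\ll\eta\ll\e\ll\r$, with $\eta$ the constant produced by Lemma~\ref{agood} on input~$\e^2$. Apply Lemma~\ref{lem:He} to $H$ with parameter~$\e$, yielding a cleaned subhypergraph $H_\e$ on at least $(1-3\e)n$ vertices, in which every surviving vertex~$v$ has $\deg_{H_\e}(v)\ge(\delta+\r/2)\binom{n}{2}$, and every pair in $\partial H_\e$ has $H$-codegree exceeding~$\e^2 n$. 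Place the excluded vertices into the tentative~$V_0$.

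The core step is to show that, after moving at most $\e n$ further exceptional vertices into $V_0$ (so $|V_0|\le 4\e n$), every remaining $v$ has $|\tilde N_{\eta,1}(v)|\ge(\delta+\r/2)n$. The argument follows the template of Lemma~\ref{erdos}: since $L_{H_\e}(v)\subseteq N_H(v)\cap\partial H_\e$ has at least $(\delta+\r/2)\binom{n}{2}$ elements and each pair in $\partial H_\e$ has $H$-codegree above $\e^2 n$, one double-counts triples $(v,u,S)$ with $S\in N_H(v)\cap N_H(u)\cap\partial H_\e$. A careful averaging argument, leveraging both the shadow bound in Lemma~\ref{shadow} (which applies because $\delta\ge 1/4$) and the codegree guarantee of Lemma~\ref{lem:He}, shows that at least $(\delta+\r/2)n$ vertices~$u$ satisfy $|N_H(v)\cap N_H(u)\cap\partial H_\e|\ge\e^2\binom{n}{2}$; Lemma~\ref{agood} then certifies $(\eta,1)$-reachability of each such~$u$ to~$v$.

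With large reachable neighbourhoods in hand, partition $V(H)\setminus V_0$ into equivalence classes $V_1,\dots,V_r$ under the transitive closure of $(\eta,1)$-reachability. Because $\tilde N_{\eta,1}(v)$ is contained in the class of~$v$, each $|V_i|\ge(\delta+\r/2)n\ge\e^2 n/2$, and hence $r\le t$. To upgrade each $V_i$ to $(\beta,2^{t-1})$-closedness, observe that the reachability subgraph $R[V_i]$ has minimum degree at least $(\delta+\r/2)n\ge|V_i|/t$ (since $1/t\ge\delta+\r/2$), so a standard diameter estimate gives $\mathrm{diam}(R[V_i])\le 2^{t-1}$. Concatenating the witnessing $(k-1)$-sets along an $R$-path of length $\le 2^{t-1}$ between any two $u,w\in V_i$, while enforcing disjointness at each step, yields $(\beta_0,2^{t-1})$-reachability for a suitable $\beta_0>0$; a final invocation of Proposition~\ref{reachablesteps} then sharpens $\beta_0$ to~$\beta$.

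The main obstacle will be the second step, namely pushing the reachable-neighbourhood lower bound from the $\Omega(\e^2 n)$ that naive Markov yields up to the required $(\delta+\r/2)n$. This is essential to conclude $r\le t$ rather than merely $r\le O(1/\e^2)$. Achieving it requires exploiting higher-moment information about the codegree distribution on $\partial H_\e$ together with the shadow estimate (which is where the hypothesis $\delta\ge 1/4$ enters), going beyond the single-witness conclusion of Lemma~\ref{erdos} to a strong ``most vertices'' statement.
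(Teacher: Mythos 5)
Your proposal reduces the problem to establishing that (after deleting a small set of exceptional vertices) every surviving vertex $v$ satisfies $|\tilde N_{\eta,1}(v)|\ge(\delta+\gamma/2)n$, and you explicitly flag this as ``the main obstacle.'' That obstacle is real and is not resolved in the proposal; the paper's proof does not attempt to prove such a bound, and for good reason. The argument of Lemma~\ref{erdos}, which you propose to strengthen, only certifies the existence of a \emph{single} vertex $y$ with a large common shadow-neighborhood with $x$ -- it is an averaging argument with a single witness. The paper's counting argument (the one with the pairs $(p,u)$, $p\in N_{H_\e}(x)$, $u\in N_H(p)$) only delivers $|\tilde N_{\beta_0,1}(x)|\ge\tfrac34\e^2 n$, far below $(\delta+\gamma/2)n$, and there is no evident way to push it up: the codegree information from Lemma~\ref{lem:He}(iii) is only $\Omega(\e^2 n)$ per pair, and the shadow bound of Lemma~\ref{shadow} controls $|\partial H_\e|$ but not how the codegrees of its members distribute across vertices. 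Asking for reachable neighborhoods of size $(\delta+\gamma/2)n$ is also suspicious structurally: the bound $r\le\lfloor 1/(\delta+\gamma/2)\rfloor$ is meant to be nearly tight, which means the classes can have size very close to $(\delta+\gamma/2)n$ but need not contain \emph{all} of any vertex's reachable neighborhood.

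The paper avoids your obstacle with a genuinely different mechanism. Rather than proving each reachable neighborhood is large, it proves that \emph{any $s+1$ vertices of $H_\e$ contain two that are $1$-reachable}, where $s=\lfloor 1/(\delta+\gamma/2)\rfloor$, via inclusion-exclusion on their link-graphs: if all $\binom{s+1}{2}$ pairwise common neighborhoods were small, then $\sum_{x\in S}\deg_{H_\e}(x)\le\binom n2+o(n^2)$, contradicting $(s+1)(\delta+\gamma/2)>1$. One then takes a \emph{maximal} collection $v_1,\dots,v_r$ of pairwise non-$2^{s+1-r}$-reachable vertices (so automatically $r\le s$), and defines $V_i$ directly from $\tilde N_{s-r}(v_i)$, trimming the small pairwise overlaps. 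Both the covering property and the closedness of each $V_i$ then follow by contradiction with the maximality of $r$, not from any diameter bound. Your ``standard diameter estimate'' step is also unsubstantiated: a graph on $V_i$ with minimum degree $|V_i|/t$ has diameter $O(t)$ in general (if connected), not $2^{t-1}$, and nothing forces the $1$-reachability graph restricted to a transitive-closure class to have this minimum degree. The exponent $2^{s-1}$ in the statement comes from iterated applications of Proposition~\ref{reachablesteps} (which doubles the index at each application), not from a diameter computation, and the paper's maximality argument is precisely what makes a constant number of such iterations suffice. In short: the central step you flag is genuinely missing, your proposed substitute (transitive closure plus diameter) does not work as stated, and the paper's route around these difficulties -- the inclusion-exclusion pigeonhole and the extremal choice of $v_1,\dots,v_r$ -- is the key idea your proposal lacks.
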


\begin{proof}
Let $s=\lfloor 1/(\delta+\r/2) \rfloor$. Then we may choose $\e>0$ such that $\e\ll \min \{ (s+1)(\delta+\r/2) - 1, 1/k\}$.
Let $\eta$ be the constant returned from applying Lemma \ref{agood} with $\e^2/16$ in place of $\e$.
Note that we may require $\eta \ll \e$ because the conclusion of Lemma~\ref{agood} holds with $\eta$ replaced by any positive $\eta'<\eta$.
Furthermore, let
\[
1/n\ll \beta = \beta_{s-1} \ll \cdots \ll \beta_1 \ll \beta_0 \le \,\eta, \a\, \ll \e. 
\]
Let $H=(V, E)$ be an $n$-vertex 3-graph with $\delta_1(H)\ge (\delta+\r)\binom{n}{2}$.
We apply Lemma \ref{lem:He} to $H$ and obtain $V_0'$ and $H'$ satisfying (i) -- (iii). 

Given $v\in V$ and $0\le i\le s-1$, let $\tilde N_{i}(v) =\tilde{N}_{K,\beta_i, 2^i}(v)$ be the set of vertices in $H$ that are $(K,\beta_i, 2^i)$-reachable to $x$ (note that $\tilde N_{i}(v)$ may contain the vertices of $V'_0$).
Throughout this proof, we say $2^i$-reachable (respectively, $2^i$-closed) for $(K,\beta_i, 2^i)$-reachable (respectively, $(K,\beta_i, 2^i)$-closed) for short.

Fix $x\in V\setminus V_0'$, we claim that $|\tilde N_0(x)|\ge \frac34 \e^2 n$. To see this, let
\[
D=\left\{v\in V: |N_{H'}(v)\cap N_{H'}(x)|\geq \frac{\e^2}{16} \binom n2\right\}.
\]
Since $\deg_H(p)> \e^2 n$ for any $p\in \partial H'$, Lemma \ref{agood} implies that two vertices $x, v \in V$ are $1$-reachable if $|N_{H'}(v)\cap N_{H'}(x)| \ge \e^2 \binom n2/16$. 
Therefore $D\subseteq \tilde N_0(x)$. 
Let $t$ be the number of pairs $(p, u)$ where $p\in N_{H'}(x)$ and $u\in N_{H'}(p)$. By Lemma~\ref{lem:He} (iii), we have $t\geq \deg_{H'}(x) \cdot \e^2 n$.
We also know that $|N_{H'}(v)\cap N_{H'}(x)|< \frac{\e^2}{16} \binom n2$ if $v\notin D$, and $|N_{H'}(v)\cap N_{H'}(x)|\le \deg_{H'}(x)$ otherwise. Consequently,
\[
\deg_{H'}(x) \, \e^2 n\leq t \leq n\cdot \frac{\e^2}{16}\binom n2+|D|\cdot \deg_{H'}(x).
\]
So we get $|D|\ge \e^2 n  -  \e^2 n \binom n2/ (16\deg_{H'}(x))$. Since $x\in V\setminus V_0'$ and $\delta\ge 1/4$, by Lemma~\ref{lem:He} (ii), 
\begin{equation}\label{eq:dHe}
\deg_{H'}(x) \ge (\delta + \r - \e)\binom n2 \ge \left(\delta + \frac{\r}2 \right)\binom n2 > \frac14\binom n2.
\end{equation}
Consequently $ |D|\ge \frac34 \e^2n$ and in turn, $|\tilde N_0(x)|\ge \frac34 \e^2 n$.

Since $|\tilde N_0(x)| \ge \frac34 \e^2 n$, by Proposition \ref{reachablesteps} and the choice of $\beta_i$'s, we derive $\tilde {N}_{i}(x)\subseteq \tilde {N}_{i+1}(x)$ for all $0\le i<s-1$ and all $x\in V\setminus V_0'$. Furthermore, 
if a set $W\subseteq V\setminus V_0'$ is $2^i$-closed in $H$ for some $i\le s-1$, then $W$ is $ 2^{s-1}$-closed in $H$.

Given any set $S\subseteq V\setminus V_0'$ of $s+1$ vertices, by the Inclusion-Exclusion principle, 
\[
\sum_{x\in S} \deg_{H'}(x) - \sum_{x, y\in S} | N_{H'}(x) \cap N_{H'}(y) | \le | \bigcup_{x\in S} N_{H'}(x) | \le \binom{n}{2},
\]
which implies $\sum_{x, y\in S} | N_{H'}(x) \cap N_{H'}(y) | \ge ( (s+1)(\delta+\r/2) - 1 ) \binom n2$ by \eqref{eq:dHe}. Since $\e\ll (s+1)(\delta+\r/2) - 1$, there are two vertices $x, y\in S$ such that $|N_{H'}(x)\cap N_{H'}(y)| \ge \frac{\e^2}{16} \binom n2$, so $x, y$ are 1-reachable to each other.
Consequently, if $s=1$, then $V\setminus V_0'$ is 1-closed and we get the desired partition $\cP=\{V_0', V\setminus V_0'\}$.

We may thus assume that $s\ge 2$ and there are two vertices in $V\setminus V_0'$ that are \emph{not} $2^{s-1}$-reachable to each other (otherwise we are done). Let $t'$ be the largest integer such that
there exist $v_1,\dots, v_{t'}\in V\setminus V_0'$ such that no two of them are $2^{s+1-t'}$-reachable to each other.
Earlier arguments show that $t'$ exists and $2\le t'\le s$.
Fix such $v_1,\dots, v_{t'}\in V\setminus V_0'$. By Proposition~\ref{reachablesteps}, we can assume that any two of them are not $2^{s-t'}$-reachable to each other. Then $\tilde N_{s-t'}(v_i)$, $i\in [t']$ satisfy the following properties.
\begin{enumerate}[(a)]
\item Any $v\in (V\setminus V_0')\setminus\{v_1,\dots, v_{t'}\}$ must be in $\tilde N_{s-t'}(v_i)$ for some $i\in [t']$ -- otherwise $\{v, v_1,\dots, v_{t'}\}$ contradicts the definition of $t'$.

\item $|\tilde N_{s-t'}(v_i)\cap \tilde N_{s-t'}(v_j)|<\a n$ for any $i\ne j$ -- otherwise there are at least
\begin{align*}
\frac{\a n}{(2^{s+1-t'}k-1)!} \left( \beta_{s-t'}n^{2^{s-t'}k-1} - n^{2^{s-t'}k-2} \right) \left(\beta_{s-t'}n^{2^{s-t'}k-1} - 2^{s-t'}k  \, n^{2^{s-t'}k-2} \right) 
\end{align*}
reachable $(2^{s+1-t'}k-1)$-sets for $v_i, v_j$ because there are at least $\a n$ vertices 
$w\in \tilde N_{s-t'}(v_i)\cap \tilde N_{s-t'}(v_j)$, at least $ \beta_{s-t'}n^{2^{s-t'}k-1} - n^{2^{s-t'}k-2}\ $ $2^{s-t'}$-reachable sets $S$ for $v_i$ and $w$ that do not contain $v_j$, and at least $\beta_{s-t'}n^{2^{s-t'}k-1} - 2^{s-t'}k n^{2^{s-t'}k-2}\ $ $2^{s-t'}$-reachable sets for $v_j$ and $w$ that avoid $\{v_i\}\cup S$; finally, we divide by $(2^{s+1-t'}k-1)!$ to eliminate the effect of over-counting. Since $\beta_{s-t'}\gg \beta_{s+1-t'}$, this gives at least $\beta_{s+1-t'} n^{2^{s+1-t'}k-1}$ reachable $(2^{s+1-t'}k-1)$-sets for $v_i, v_j$, contradicting the assumption that $v_i, v_j$ are not $2^{s+1-t'}$-reachable to each other.
\end{enumerate}

For $i\in [t']$, let $V_i=(\tilde N_{s-t'}(v_i)\cup \{v_i\})\setminus (V_0' \cup \bigcup_{j\in [t']\setminus \{i\}} \tilde N_{s-t'}(v_j))$. We observe that $V_i$ is $2^{s-t'}$-closed for all $i\in [t']$. Indeed, if there exist $u_1, u_2\in V_i$ that are not $2^{s-t'}$-reachable to each other, then $\{u_1, u_2, v_1,\dots, v_{t'} \} \setminus\{v_{i}\}$ contradicts the definition of $t'$. 
Without loss of generality, we may assume $|V_1|\ge \cdots \ge |V_{t'}|$.
Let $t$ be the largest integer $i\in [t']$ such that $|V_i| \ge \e^2 n$. 
Let $V_0=V\setminus (\bigcup_{1\le i\le t}V_i)$. 
Clearly $V_0'\subseteq V_0$. By (a) and (b), we have $|V_0|\le |V_0'| +\binom{t'}{2}\a n + t'\e^2 n\le 4\e n$. 
So $\cP=\{V_0, V_1, \dots, V_t\}$ is the desired partition.
\end{proof}

\medskip
We use the following definitions introduced by Keevash and Mycroft \cite{KM1}.
Let $r, t>0$ be integers and let $F$ be an $r$-graph of order $f$.
Suppose that $H$ is an $r$-graph with a partition $\cP=\{V_0, V_1,\dots, V_t\}$ of $V(H)$.
The \emph{index vector} $\mathbf{i}_{\cP}(S)\in \mathbb{Z}^t$ of a subset $S\subset V(H)$ with respect to $\cP$ is the vector whose coordinates are the sizes of the intersections of $S$ with $V_1,\dots, V_t$.
We call a vector $\mathbf{i}\in \mathbb{Z}^t$ an \emph{$s$-vector} if all its coordinates are nonnegative and their sum is $s$.
Given $\mu>0$, an $r$-vector $\mathbf{v}\in \mathbb{Z}^t$ is called a $\mu$\emph{-robust edge vector} if at least $\mu |V(H)|^r$ edges $e\in E(H)$ satisfy $\mathbf{i}_\cP(e)=\mathbf{v}$;
an $f$-vector $\mathbf{v}\in \mathbb{Z}^t$ is called a $\mu$\emph{-robust $F$-vector} if at least $\mu |V(H)|^f$ copies $F'$ of $F$ in $H$ satisfy $\mathbf{i}_\cP(V(F'))=\mathbf{v}$.
Let $I_{\cP}^{\mu}(H)$ be the set of all $\mu$-robust edge vectors and let $I_{\cP, F}^{\mu}(H)$ be the set of all $\mu$-robust $F$-vectors.
Let $L_{\cP, F}^{\mu}(H)$ be the lattice generated by the vectors of $I_{\cP,F}^{\mu}(H)$, in other words, $L_{\cP, F}^{\mu}(H)$ consists of all linear combinations of the vectors of $I_{\cP,F}^{\mu}(H)$.

When $\mathbf{i}$ is a $\mu$-robust edge vector and $F$ is a complete $r$-partite $r$-graph,
Proposition \ref{supersaturation} implies that there exists $\mu'>0$ such that the edges with index vector $\mathbf{i}$ give rise to at least $\mu'n^f$ copies of $F$ with certain index vectors.
For example, when $t=2$, $F=K_{a,b,c}$ (so $r=3$) and $(1,2)\in I_{\cP}^{\mu}(H)$, we have $(a,b+c), (b,a+c)$ and $(c,a+b)\in I_{\cP, F}^{\mu'}(H)$ for some $\mu'>0$.
For $j\in [t]$, let $\mathbf{u}_j\in \mathbb{Z}^t$ be the $j$th \emph{unit vector}, namely, $\mathbf{u}_j$ has 1 on the $j$th coordinate and 0 on other coordinates.

Given a partition $\cP=\{V_0, V_1, \dots, V_t\}$ of $V(H)$, the following lemma shows that $V(H)\setminus V_0$ is closed if $\bfu_j - \bfu_l\in L_{\cP, F}^{\mu'}(H)$ for all $1\leq j< l\leq t$. Because of potential applications to other problems, we prove the following lemma for $r$-graphs with $r\ge 3$.

\begin{lemma}\label{lattticec}
Let $i_0, r, t>0$ be integers and let $F$ be an $r$-graph of order $f$.
Given constants $\e, \beta, \mu'>0$, there exists $\beta'>0$ and an integer $i_0'>0$ such that the following holds for sufficiently large $n$.
Let $H$ be an $r$-graph on $n$ vertices with a partition $\cP=\{V_0, V_1, \dots, V_t\}$ such that for each $j\in [t]$, $|V_j|\ge \e^2 n$ and $V_j$ is $(F,\b,i_0)$-closed in $H$. 
If $\bfu_j - \bfu_l\in L_{\cP, F}^{\mu'}(H)$ for all $1\leq j< l\leq t$, then $V(H)\setminus V_0$ is $(F,\b',i_0')$-closed in $H$.
\end{lemma}

\begin{proof}
We call a set $I$ of $f$-vectors in $\mathbb{Z}^t$ \emph{a base} if all $\bfu_j - \bfu_l$, $j< l$, can be written as linear combinations of the vectors in $I$, namely, there exist $a^{j, l}_{\bfv}\in \mathbb{Z}$ such that $\bfu_j - \bfu_l = \sum_{\bfv\in I}a^{j, l}_{\bfv}\bfv$.
For example, the set of all $f$-vectors in $\mathbb{Z}^t$ is a base. 
Since there are $\binom{f+t-1}{t-1}$ $f$-vectors in $\mathbb{Z}^t$, there are at most $2^{\binom{f+t-1}{t-1}}$ bases.
Given a base $I$, we denote by $C_I$ the largest $|a^{j, l}_{\bfv}|$ over all $\bfv\in I$ and $j< l$. Let $C' = \max C_I$ over all bases $I$. 

Given integers $r, t, i_0$ and constants $\e, \beta, \mu'>0$, let $n$ be sufficiently large, in particular, $n\gg C'$. Suppose that $H$ is an $r$-graph satisfying all the assumptions, in particular, $\bfu_j - \bfu_l\in L_{\cP, F}^{\mu'}(H)$ for all $j< l$.
We claim that for any $j< l$, any $x_j\in V_j$ and any $x_l\in V_l$ are $(F,\beta_{j, l}, {i}_{j, l})$-reachable for some ${\b}_{j, l}>0$ and some ${i}_{j, l}\ge i_0$.
Once this is done, since $|\tilde{N}_{F,\beta, i_0}(v)|\ge |V_j|-1 \ge \e^2 n /2$ for any $j\in [t]$ and $v\in V_j$, we can apply Proposition \ref{reachablesteps} with $\e^2/2$ in place of $\e$ and $i_0'= \max\{i_{j, l}\}$ and derive that any $x_j\in V_j$ and any $x_l\in V_l$ are $(F,\tilde{\beta}, i'_0)$-reachable for some $\tilde{\beta} >0$. 
For the same reason, any two vertices in $V_j$, $j\in [t]$, are $(F,\b'', i'_0)$-reachable for some $\b''>0$. 
We thus conclude that any two vertices of $V(H) \setminus V_0$ are $(F,\b', i'_0)$-reachable with $\b' =\min\{\tilde{\b}, \b''\}$.

Below we prove this claim for $j=1$ and $l=2$. Let $I= I_{\cP, F}^{\mu'}(H)$. 
By our assumption,  there exist $a_{\bfv}\in \mathbb{Z}$, $\mathbf{v}\in I$ such that $\bfu_1 - \bfu_2=\sum_{\mathbf{v}} a_{\bfv}\mathbf{v}$ and $|a_{\bfv}|\le C'$ for all $\mathbf{v}\in I$.
For each $\mathbf{v}\in I$, if $a_\bfv\ge 0$, then let $p_{\bfv}=a_\bfv$ and $q_\bfv=0$;  otherwise let $p_\bfv=0$ and $q_\bfv=-a_\bfv$. Hence
\begin{align}
\bfu_1 - \bfu_2=\sum_{\mathbf{v}\in I} (p_{\bfv} - q_\bfv) \mathbf{v}
\quad i.e., \quad
\sum_{\mathbf{v}\in I}{q_{\mathbf{v}}}\mathbf{v} + \mathbf{u}_1=\sum_{\mathbf{v}\in I} {p_{\mathbf{v}}}\mathbf{v} + \bfu_2. \label{regroup11}
\end{align}
By comparing the sums of all the coordinates from two sides of either equation in \eqref{regroup11}, we obtain that
$
\sum_{\mathbf{v}\in I}{p_{\mathbf{v}}}=\sum_{\mathbf{v}\in I}{q_{\mathbf{v}}},
$
which  we denote by $C$. Then $C\le |I| C' \le \binom{f+t-1}{t-1}C' < \mu' n/(4f)$ because $n$ is sufficiently large.
We greedily select $p_{\mathbf{v}} + q_{\mathbf{v}}$ vertex-disjoint copies of $F$ with index vector $\mathbf{v}$ that do not contain $x_1$ or $x_2$ for all $\mathbf{v}\in I$. This gives rise to two disjoint families $\K^p$ and $\K^q$, where $\K^p$ consists of $p_{\mathbf{v}}$ copies of $F$ with index vector $\bfv$ for all $\bfv\in I$, and $\K^q$ consists of $q_{\mathbf{v}}$ vertex-disjoint copies of $F$ with index vector $\mathbf{v}$ for all $\bfv\in I$.
Note that $|V(\K^p)|=|V(\K^q)|= fC$.
When selecting any copy of $F$, we need to avoid at most $2f C$ vertices, which are incident to at most $2f C n^{f-1}\leq {\mu'} n^f/2$ copies of $F$. Therefore, there are at least $\mu' n^f/2$ choices for each copy of $F$ in 
$\K^p$ and $\K^q$ and in turn, at least $({\mu'} n^f/2)^{2C}$ choices for $\K^p$ and $\K^q$.

By \eqref{regroup11}, we have $\mathbf{i}_{\cP}(V(\K^q))+\bfu_1=\mathbf{i}_{\cP}(V(\K^p)) + \bfu_2$. 
This implies that we may write $V(\K^p)=\{y_1, \dots, y_{fC}\}$, $V(\K^q)= \{z_1, \dots, z_{fC}\}$ such that 
$y_1\in V_1$, $z_1\in V_2$, and for $i\ge 2$, $y_i$ and $z_i$ are from the same part of $\cP$ (and thus are $(F,\b, i_0)$-reachable to each other). 
We next select a reachable $(i_0 f-1)$-set $S_i$ for $y_i, z_i$ for $i\ge 2$ such that $S_2, \dots, S_{fC}$ are disjoint and also disjoint from $V(\K^p\cup \K^q)\cup\{x_1, x_2\}$. 
When selecting each $S_i$, we need to avoid at most constantly many vertices 
and thus there are at least $\frac{\b}2 n^{i_0 f-1}$ choices for each $S_i$. 
Finally, since $x_1$ and $y_1$ and respectively, $x_2$ and $z_1$ are $(F,\b, i_0)$-reachable, 
we can pick two disjoint $(i_0 f -1)$-sets $S_{1}, S_{0}$ such that $S_1$ is a reachable set for $x_1$ and $y_1$,  
$S_0$ is a reachable set for $x_2$ and $z_1$, and $S_1, S_0$ are disjoint from $V(\K^p\cup \K^q)\cup\{x_1, x_2\} \cup \bigcup_{i=2}^{fC}S_i$.
Again there are at least $\frac{\b}2 n^{i_0 f-1}$ choices for each of $S_{1}, S_{0}$. 
We claim that $A:=\bigcup_{i=0}^{fC} S_i \cup V(\K^p\cup \K^q)$ is a reachable set for $x_1$ and $x_2$. 
Indeed, $H[A\cup \{x_1\}]$ contains an $F$-factor because $H[S_i\cup\{z_i\}]$ for $i\ge 2$, $H[S_1\cup \{x_1\}]$, $H[S_{0}\cup \{z_1\}]$ and $\K^p$ all contain $F$-factors; on the other hand, $H[A\cup \{x_2\}]$ contains an $F$-factor because $H[S_i\cup\{y_i\}]$ for $i\ge 1$, $H[S_{0}\cup \{x_2\}]$ and $\K^q$ all contain $F$-factors.
Let $i_{1, 2} = i_0 f C +C+i_0$ and $\beta_{1, 2} = \left(\frac{\mu'}2\right)^{2C}\left(\frac{\b}2\right)^{fC+1}/(i_{1,2} f-1)!$. The procedure above provides at least
\[
\frac{\left(\frac{\mu'}2n^f\right)^{2C}\left( \frac{\b}2 n^{i_0k-1}\right)^{fC+1}}{(i_{1,2}f-1)!} = \beta_{1,2} n^{i_{1,2}f-1}
\]
reachable $(i_{1,2}f-1)$-sets for $x_1$ and $x_2$. 
\end{proof}

\subsection{Proof of Lemma \ref{lem.absorbing}}
The following simple fact will be used later for finding linear combinations of robust $K$-vectors.

\begin{fact}\label{fact}
Let $a,b,c\in \mathbb{Z}$. If $\gcd(a,b,c)=1$ and $\gcd(b-a,c-b)$ is odd, then
      $\gcd(a+b,a+c,b+c)=1$.
\end{fact}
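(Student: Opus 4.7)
The plan is to show that any common divisor $g$ of $a+b$, $a+c$, $b+c$ must divide $2$, and then to rule out $g=2$ via a parity argument using the oddness of $\gcd(b-a,c-b)$.

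First I would form elementary linear combinations of the three sums. The identity $(a+b)+(a+c)-(b+c)=2a$, together with its analogues $(a+b)-(a+c)+(b+c)=2b$ and $-(a+b)+(a+c)+(b+c)=2c$, shows that $g:=\gcd(a+b,a+c,b+c)$ divides each of $2a$, $2b$, $2c$. Consequently $g\mid \gcd(2a,2b,2c)=2\gcd(a,b,c)=2$, so $g\in\{1,2\}$.

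Next I would eliminate the case $g=2$. If $g=2$, then all three sums $a+b$, $a+c$, $b+c$ are even, which forces $a\equiv b\equiv c\pmod 2$. Since $\gcd(a,b,c)=1$, not all of $a,b,c$ can be even, so they must all be odd. But then both $b-a$ and $c-b$ are even, so $\gcd(b-a,c-b)$ is even, contradicting the hypothesis that $\gcd(b-a,c-b)$ is odd. Therefore $g=1$, as desired.

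There is no real obstacle here: the whole argument is a two-line parity computation once one observes the relations $2a,2b,2c\in\langle a+b,a+c,b+c\rangle$. The only mild subtlety is keeping track of why $\gcd(a,b,c)=1$ is needed (to rule out the all-even case before invoking the oddness hypothesis on $\gcd(b-a,c-b)$).
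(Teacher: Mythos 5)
Your proof is correct. It takes a slightly different route from the paper's: you first use $\gcd(a,b,c)=1$ (via the combinations $(a+b)+(a+c)-(b+c)=2a$, etc., to get $g\mid\gcd(2a,2b,2c)=2$), then invoke the oddness of $\gcd(b-a,c-b)$ to rule out $g=2$ via a parity case analysis on $a,b,c$. The paper reverses the order: it first observes that $l=\gcd(a+b,a+c,b+c)$ divides the differences $b-a$ and $c-b$, hence divides the odd number $\gcd(b-a,c-b)$, so $l$ is odd; then from $l\mid(a+b)+(a+c)+(b+c)=2(a+b+c)$ and oddness it concludes $l\mid a+b+c$, whence $l\mid a,b,c$ and $l\mid\gcd(a,b,c)=1$. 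Both arguments are essentially divisibility chasing among the same linear combinations; the paper's avoids the case distinction, while yours makes the bound $g\mid 2$ explicit up front. Either is fine and equally short.
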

\begin{proof}
Let $l=\gcd(a+b,a+c,b+c)$. Then $l\mid(b-a)$ and $l\mid(c-b)$ and consequently $l\mid \gcd(b-a,c-b)$.
Thus $l$ is odd. On the other hand, $l\mid2(a+b+c)$. Since $l$ is odd, it follows that $l\mid (a+b+c)$. Consequently, $l\mid a, l\mid b$ and $l\mid c$, which implies $l\mid\gcd(a,b,c)=1$, namely, $l=1$.
\end{proof}

\medskip

\begin{proof}[Proof of Lemma \ref{lem.absorbing}]
Fix $\delta \ge f(a,b,c)$ and $\r > 0$.
Let $\eta=\eta(\gamma)$ be the constant returned by Lemma \ref{erdos}.
In addition, assume that $\eta \le \min\{ 1/(2k), \gamma/4, \mu'_1/2 \}$, where $\mu'_1$ is the constant returned by Proposition \ref{supersaturation} with inputs $\mu=1/8$, $l_1=b$, and $l_2=c$.
Let $i_0 = 2^{\lfloor 1/(\delta+\r/2) \rfloor-1}$.
Let $\beta \ll\e \ll\r$ be the constants returned by Lemma \ref{lem:P}, and assume that $\e \le \eta^2/4$.
We pick $0<\mu \ll \e$ and let $\mu'$ the constant returned by Proposition \ref{supersaturation} with $\mu$, $l_1=a$, $l_2=b$, and $l_3=c$.
We apply Lemma \ref{lattticec} with $\beta$, $i_0$ and $\mu'$ and get $\beta'$ and $i_0'$.
Finally, we apply Lemma \ref{lemabsorbing} with $\beta'$, $\eta$ and $i_0'$, and get $\a>0$.

Let $n$ be sufficiently large and let $H$ be a $3$-graph on $n$ vertices such that  $\delta_1(H)\ge (\delta+\r) \binom n2$. It suffices to verify the assumptions ($\triangle$) and ($\diamondsuit$) in Lemma \ref{lemabsorbing} -- Lemma \ref{lemabsorbing} thus provides the desired vertex set $W$ (here $|W|\le \eta n\le \gamma n/4$).

If $\delta_1(H)\geq (6-4\sqrt{2}+\gamma)\binom n2$, then ($\diamondsuit$) holds by Lemma \ref{erdos}; 
otherwise by the definition of $f(a,b,c)$, we may assume that $a=1$ and $\delta_1(H)\geq (\frac14+\gamma)\binom n2$. By Proposition \ref{supersaturation}, there are at least $\mu'_1 (n-1)^{b+c} \ge \eta n^{k-1}$ copies of $K^{(2)}_{b, c}$ in the link graph\footnote{Given a $3$-graph $H$ and a vertex $v\in V(H)$, the link graph is defined as the graph with the vertex set $V(H)\setminus \{v\}$ and the edge set $\{S\setminus \{v\}: v\in S, S\in E(H)\}$.} of each vertex of $H$ (thus ($\diamondsuit$) holds).

In the rest of the proof we verify ($\triangle$) in cases depending on the type of $K=K_{a,b,c}$.
We first apply Lemma \ref{lem:P} to $H$ and obtain a partition $\cP=\{V_0, V_1, \dots, V_t\}$ of $V(H)$ such that $|V_0|\le 4\e n\le \eta^2 n$, $t\le \lfloor 1/(\delta+\r/2) \rfloor$, 
$|V_i|\ge \e^2 n$ and $V_i$ is $(K,\beta, i_0)$-closed in $H$ for all $i\in [t]$.
In particular, $t=1$ when $d=\gcd(b-a,c-b)$ is even (and $\delta \ge \frac12$); $t\le 2$ if $d\geq 3$ is odd (and $\delta \geq \frac49 $); $t\le 3$ if $d=1$ (and $\delta \geq \frac14$).

We are done if $t=1$. When $t\ge 2$, we consider $\mu$-robust edge vectors in $H$ with respect to the partition $\cP$.
By Lemma \ref{lattticec}, it suffices to verify the assumption in Lemma \ref{lattticec}, that is, $(1,-1)\in L_{\cP, K}^{\mu'}(H)$ when $t=2$ and respectively, $(1,-1,0)$, $(1,0,-1)$, $(0,1,-1)\in L_{\cP, K}^{\mu'}(H)$ when $t=3$.
For convenience, write
\[\t_1=(a,b+c), \t_2=(b,a+c), \t_3=(c,a+b), \t_4=(a+b+c,0)\]
and
\[\t_i'=(a+b+c,a+b+c)-\t_i \text{ for } 1\le  i\le 4.\]

\begin{claim}\label{clm:cross}
For any partition $\cP'=\{V_0, V', V''\}$ of $V(H)$ with $|V_0|\le 4\e n$ and $|V''|, |V'|\ge \e^2 n$, we have $(1,2)$ or $(2,1)\in I_{\cP'}^{3\mu}(H)$.
\end{claim}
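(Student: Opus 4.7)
The plan is to argue by contradiction: assume that both $e_{(2,1)}:=|\{e\in E(H):\mathbf{i}_{\cP'}(e)=(2,1)\}|$ and $e_{(1,2)}$ are strictly less than $3\mu n^3$, and derive $|V'|+|V''|>n$, contradicting the fact that $\cP'$ is a partition of $V(H)$. The guiding intuition is that if almost no edges of $H$ cross between $V'$ and $V''$, and only $O(\e n^3)$ edges meet $V_0$, then a typical vertex of $V'$ must find nearly all of its codegree inside $\binom{V'}{2}$, forcing $|V'|$ to be roughly $\sqrt{\delta}\,n$; the mirror argument on $V''$ combined with $\delta\ge \tfrac14$ will then overshoot $n$.

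The key step is a double-counting plus averaging. For $v\in V'$, decompose
\[
\deg_H(v)=d^{30}(v)+d^{21}(v)+d^{12}(v)+d^{V_0}(v),
\]
where $d^{30}(v), d^{21}(v), d^{12}(v)$ count the edges through $v$ whose remaining pair has index $(2,0), (1,1), (0,2)$ in $(V',V'')$ respectively, and $d^{V_0}(v)$ counts edges through $v$ whose remaining pair meets $V_0$. Summing over $v\in V'$ yields $\sum_{v\in V'}(d^{21}(v)+d^{12}(v))=2e_{(2,1)}+e_{(1,2)}<9\mu n^3$, so since $|V'|\ge \e^2 n/2$, there exists $v_1^*\in V'$ with $d^{21}(v_1^*)+d^{12}(v_1^*)\le 18\mu n^2/\e^2$. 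Combined with the trivial bound $d^{V_0}(v_1^*)\le |V_0|(n-1)\le 4\e n^2$ and the hierarchy $\mu\ll\e\ll\gamma$ fixed in the setup of Lemma~\ref{lem.absorbing}, this gives
\[
\binom{|V'|-1}{2}\ge d^{30}(v_1^*)\ge \deg_H(v_1^*)-\frac{18\mu n^2}{\e^2}-4\e n^2\ge \left(\delta+\frac{\gamma}{2}\right)\binom{n}{2},
\]
so $|V'|\ge \sqrt{\delta+\gamma/2}\,(n-1)$. Running the symmetric argument with $V''$ in place of $V'$ produces $|V''|\ge \sqrt{\delta+\gamma/2}\,(n-1)$ as well.

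Finally, since $\delta\ge f(a,b,c)\ge \tfrac14$, one has $2\sqrt{\delta+\gamma/2}\ge \sqrt{1+2\gamma}>1+\gamma/3$, whence $|V'|+|V''|>(1+\gamma/3)(n-1)>n$ for $n$ sufficiently large, contradicting $|V_0|+|V'|+|V''|=n$. The only delicate point (the ``hard part'') is making sure the constant hierarchy is strong enough to absorb the error terms $18\mu/\e^2$ and $4\e$ into $\gamma/2$; this is precisely why the setup of Lemma~\ref{lem.absorbing} chooses $\mu\ll\e\ll\gamma$ and not a weaker relation. No genuine combinatorial obstacle is expected, as the argument reduces to the classical observation that a 3-graph whose minimum vertex degree exceeds $\tfrac14\binom{n}{2}$ cannot be almost completely split across a nontrivial bipartition.
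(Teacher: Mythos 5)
Your proof is correct. Both arguments hinge on the same observation—that minimum vertex degree above $(\tfrac14+\gamma)\binom n2$ forbids $V'\cup V''$ from being almost free of crossing edges—but the executions differ. The paper argues directly: after assuming WLOG $|V'|\le n/2$, it shows every $v\in V'$ lies in at least $\e n^2$ crossing edges (else $\deg(v)\le\binom{n/2}{2}+\e n^2+|V_0|n$ would fall below the degree bound), pigeonholes each such vertex to favor one of the two index vectors, pigeonholes again over $V'$ to find a uniform majority, and sums. You instead argue by contradiction: assuming both crossing counts are below $3\mu n^3$, you average over $V'$ to extract a vertex $v_1^*$ with only $O(\mu n^2/\e^2)$ crossing edges, force $\binom{|V'|-1}{2}\ge(\delta+\gamma/2)\binom n2$ so $|V'|\gtrsim\sqrt{\delta+\gamma/2}\,n$, repeat for $V''$, and observe $|V'|+|V''|>n$ since $\delta\ge\tfrac14$. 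Your version is symmetric (no WLOG on which part is smaller) and produces a single clean contradiction via a square-root estimate, whereas the paper's version directly identifies a robust index vector through a double pigeonhole. Both correctly rely on $\mu\ll\e\ll\gamma$ to absorb the error terms, and both have ample slack. One small stylistic note: when extracting $|V'|\gtrsim\sqrt{\delta+\gamma/2}\,(n-1)$ from $\binom{|V'|-1}{2}\ge(\delta+\gamma/2)\binom n2$, it is cleanest to pass through $(|V'|-1)^2>(|V'|-1)(|V'|-2)\ge(\delta+\gamma/2)(n-1)^2$, but this is a minor presentational point and does not affect correctness.
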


\begin{proof}
Without loss of generality, assume that $|V'| \le n/2$. Fix $v\in V'$. We observe that $v$ is contained in at least $\e n^2$ crossing edges (those with index vector $(1,2)$ or $(2,1)$) -- otherwise $\delta_1(H)\leq \binom {n/2}2 + \e n^2 + |V_0|n < (\frac14 + \r)\binom n2$, contradicting our assumption on $\delta_1(H)$.
Hence $v$ is in at least $\e n^2/2$ edges with index vector $(1,2)$ or $\e n^2/2$ edges with index vector $(2,1)$.
Without loss of generality, assume that at least half of the vertices in $V'$ are in at least $\e n^2/2$ edges with index vector $(1,2)$. Thus the number of edges with index vector $(1,2)$ is at least 
$\frac12\e^2 n \cdot \e n^2/2 \ge 3\mu n^3$ as $\mu \ll \e$.
This means that $(1,2)\in I_{\cP'}^{3\mu}(H)$.
\end{proof}

\medskip
\noindent \textbf{Case 1: $K$ is of type $d\geq 3$ with $d$ odd.}

In this case, $\delta_1(H)\geq (\frac49+\r)\binom n2$. Thus $t = 2$ and $\cP=\{V_0, V_1, V_2\}$.
By Claim \ref{clm:cross}, without loss of generality, assume that $(1,2)\in I_{\cP}^{\mu}(H)$.
If $I_{\cP}^{\mu}(H)=\{(1,2)\}$, then  assume that $|V_2|=p n$ for some $0<p<1$.  The number of edges with index vector $(1,2)$ is at most
\[
|V_1| \binom{|V_2|}2< (1-p)p^2 n^3/2 \le \frac 49 \cdot \frac{n^3}{6},
\]
where the second inequality becomes an equality when $p=2/3$.
Thus, $e(H)\leq \frac49  \frac{n^3}{6} + 3\mu n^3 + |V_0| n^2 <\frac49\binom n3 + 5\e n^3 $ (where $3\mu n^3$ bounds the number of edges with other index vectors), contradicting our assumption on $\delta_1(H)$. Therefore, $|I_{\cP}^{\mu}(H)|\geq 2$ and there are $3$ possibilities: $I_{\cP}^{\mu}(H)\supseteq\{(1,2),(3,0)\}$, $I_{\cP}^{\mu}(H)\supseteq \{(1,2),(0,3)\}$ and $I_{\cP}^{\mu}(H)\supseteq \{(1,2),(2,1)\}$. By Proposition \ref{supersaturation},
\[
I_{\cP, K}^{\mu'}(H)\supseteq \{\t_1, \t_2, \t_3, \t_4 \} \text{  or  } I_{\cP, K}^{\mu'}(H)\supseteq \{\t_1,\t_2,\t_3,\t_4'\} \text{  or  } I_{\cP, K}^{\mu'}(H)\supseteq\{\t_1,\t_2,\t_3,\t_1',\t_2',\t_3'\},
\]
respectively. If $\{\t_1, \t_2, \t_3, \t_4 \}\subseteq I_{\cP, K}^{\mu'}(H)$,
\[
\t_4-\t_1=(b+c,-(b+c)), \ \t_4-\t_2=(a+c,-(a+c)), \, \t_4-\t_3=(a+b,-(a+b))\in L_{\cP, K}^{\mu'}(H).
\]
Since $K$ is of type $d\geq 3$ and $d$ is odd, Fact \ref{fact} implies that $\gcd(b+c,a+c,a+b)=1$ and hence  $(1,-1)=x(\t_4-\t_1)+y(\t_4-\t_2)+z(\t_4-\t_3)\in L_{\cP, K}^{\mu'}(H)$ for some integers $x, y, z$.
Otherwise $\{\t_1,\t_2,\t_3,\t_4'\}\subseteq I_{\cP,K}^{\mu'}(H)$ or $\{\t_1,\t_2,\t_3,\t_1',\t_2',\t_3'\}\subseteq I_{\cP,K}^{\mu'}(H)$, it is easy to see that in either case
\[
(a,-a), (b,-b), (c,-c) \in L_{\cP, K}^{\mu'}(H).
\]
Since $\gcd(a,b,c)=1$, we conclude that $(1,-1)\in L_{\cP, K}^{\mu'}(H)$.

\medskip
\noindent \textbf{Case 2: $K$ is of type $1$ and $t=2$.}

By Claim \ref{clm:cross}, without loss of generality, assume that $(1,2)\in I_{\cP}^{\mu}(H)$. By Proposition \ref{supersaturation}, we have
\[
\t_1, \t_2, \t_3\in I_{\cP, K}^{\mu'}(H),
\]
and thus
\[
\t_2-\t_1=(b-a,a-b), \t_3-\t_2=(c-b,b-c) \in L_{\cP, K}^{\mu'}(H).
\]
Since $K_{a,b,c}$ is of type $1$, namely, $\gcd(b-a,c-b)=1$, we conclude that $(1,-1)\in L_{\cP, K}^{\mu'}(H)$.

\medskip
\noindent \textbf{Case 3: $K$ is of type $1$ and $t=3$.}

If $(1,2,0)\in I_{\cP}^{\mu}(H)$, then the arguments in Case~2 show that $(1,-1, 0)\in L_{\cP, K}^{\mu'}(H)$. If we also have $(0, 1, 2)\in I_{\cP}^{\mu}(H)$, then $(0, 1, -1)\in L_{\cP, K}^{\mu'}(H)$. Consequently $(1, 0, -1)\in L_{\cP, K}^{\mu'}(H)$, and we are done. In general, let $T$ be the set of all vectors with three coordinates $0, 1, 2$ (in any order). If
\begin{equation}
\label{eq:T2}
\text{$I_{\cP}^{\mu}(H)$ contains two members of $T$ whose $0$'s are on different coordinates},
\end{equation}
then the arguments above show that $(1,-1,0), (0,1,-1), (1,0,-1)\in L_{\cP, K}^{\mu'}(H)$.

We claim that \eqref{eq:T2} holds if $(1,1,1)\notin I_{\cP}^{\mu}(H)$. In this case, we prove a stronger statement than \eqref{eq:T2}: \emph{for each $i\in [3]$, $I_{\cP}^\mu(H)$ contains a member of $T$ whose $i$th coordinate is positive}. Fix $i\in [3]$.
By applying Claim \ref{clm:cross} to $\cP'=\{V_0, V_i, V_{i+1}\cup V_{i+2}\}$ (the addition is modulo 3), we may assume that at least $3\mu n^3$ edges have index vector $(1, 2)$ with respect to $\cP'$. Since $(1,1,1)\notin I_{\cP}^{\mu}(H)$,  at most $\mu n^3$ of these edges have index vector $(1,1,1)$ with respect to $\cP$.  Thus, there exists $j\ne i$ such that at least $\mu n^3$ of these edges intersect $V_j$ with two vertices. This proves the desired statement.

What remains is the case when $(1,1,1)\in I_{\cP}^{\mu}(H)$. In this case, by Proposition \ref{supersaturation},
\[
(a,b,c),(b,a,c),(a,c,b),(b,c,a),(c,a,b),(c,b,a)\in I_{\cP, K}^{\mu'}(H).
\]
This implies $(y,-y,0), (0,y,-y), (y, 0, -y)\in L_{\cP, K}^{\mu'}(H)$ for all $y\in \{b-a,c-b\}$.
Since $\gcd(b-a,c-b)=1$, we derive that $(1,-1,0), (0,1,-1), (1,0,-1)\in L_{\cP, K}^{\mu'}(H)$.
\end{proof}

\section{Proof of the Almost Tiling Lemma}

\subsection{The weak regularity and cluster hypergraphs}

Let $H=(V,E)$ be a $3$-graph and let $V_1, V_2, V_3$ be mutually disjoint non-empty subsets of $V$. We denote the number of edges with one vertex in each $V_i$, $i\in [3]$ by $e(V_1,V_2,V_3)$, and the density of $H$ with respect to $(V_1, V_2, V_3)$ by
\[
d(V_1, V_2, V_3)=\frac{e(V_1,V_2,V_3)}{|V_1||V_2||V_3|}.
\]
The triple $(V_1,V_2,V_3)$ of mutually disjoint subsets $V_1, V_2, V_3\subseteq V$ is called \emph{$(\e,d)$-regular} for $\e>0$ and $d\geq 0$ if
\[
|d(A_1, A_2, A_3)-d|\leq \e
\]
for all triples of subsets $A_i\subseteq V_i$, $i\in [3]$, satisfying $|A_i|\geq \e |V_i|$. The triple $(V_1,V_2,V_3)$ is called \emph{$\e$-regular} if it is $(\e,d)$-regular for some $d\geq 0$. By definition,
if $A_i\subseteq V_i$, $i\in [3]$, has size $|A_i|\geq p|V_i|$ for some $p\ge \e$, then $(A_1, A_2, A_3)$ is $(\e/p,d)$-regular.


Let $H=(V,E)$ be an $n$-vertex $3$-graph, a partition of $V$ into $V_0, V_1, \dots, V_t$ is called an \emph{$(\e,t)$-regular partition} if 
\begin{itemize}
\item[(i)]$|V_1|=|V_2|=\cdots=|V_t|$ and $|V_0|\leq \e n$,
\item[(ii)] for all but at most $\e\binom t3$ sets $\{i,j,l\}\in \binom{[t]}3$, the triple $(V_{i},V_{j}, V_{l})$ is $\e$-regular.
\end{itemize}
We call $V_1,\dots, V_{t}$ \emph{clusters}. Given an $(\e,t)$-regular partition $\cP=\{V_0,V_1, V_2, \dots, V_t\}$ and $d>0$, the \emph{cluster hypergraph} $\R=\R(\e, d,\cP)$ is defined as the 3-graph whose vertices are $V_1,\dots, V_{t}$ and $\{V_i, V_j, V_l\}$ forms an edge of $\R$ if and only if $(V_i, V_j, V_l)$ is $\e$-regular and $d(V_i,V_j,V_l)\geq d$.

We need a simple corollary of the Weak Regularity Lemma, which is a straightforward extension of 
Szemer\'edi's regularity lemma for graphs \cite{Sze}.
The following proposition shows that the cluster hypergraph inherits the minimum degree of the original hypergraph. Since its proof is the same as that of \cite[Proposition 15]{BHS}, we omit the proof.

\begin{proposition}\cite{BHS}\label{lem:deg}
For $0<\e<d\ll\delta$ and $t_0\geq 0$ there exist $T$ and $n_2$ such that the following holds. Suppose $H$ is a $3$-graph on $n>n_2$ vertices with $\delta_1(H)\geq \delta\binom n2$. Then there exists an $(\e,t)$-regular partition $\cP$ with $t_0<t<T$ such that the cluster hypergraph $\R=\R(\e, d, \cP)$ satisfies $\delta_1(\R)\geq (\delta-\e-d)\binom t2$.
\end{proposition}

Next we show that every regular triple can be almost perfectly tiled by copies of $K_{a, b, c}$ provided the sizes of its three parts is somewhat balanced. 
\begin{proposition}\label{proalmost}
Let $a\le b\le c$ be integers, $0<2\e\leq d$, and $m$ be sufficiently large. Suppose $(V_1, V_2, V_3)$ is $(\e, d)$-regular, 
$|V_1|\leq|V_2|\leq |V_3|=m$, and
\begin{align}\label{triplecondition}
\frac{|V_1|}a\geq \frac{|V_2|}b\geq \frac{|V_3|}c.
\end{align}
Then there is a $K_{a,b,c}$-tiling on $V_1\cup V_2\cup V_3$ covering all but at most $\frac{c}{a}\e (|V_1|+|V_2|+|V_3|)$
vertices.
\end{proposition}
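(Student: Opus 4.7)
The plan is to run a greedy $K_{a,b,c}$-removal on $(V_1,V_2,V_3)$, leveraging the fact that a single copy of $K_{a,b,c}$ can be embedded with its three vertex classes distributed to $V_1,V_2,V_3$ in any of up to six different orderings. If one were to use only the canonical assignment $(a,b,c)\mapsto(V_1,V_2,V_3)$, then $V_3$ would be exhausted well before $V_1$ and $V_2$, leaving a linear surplus of vertices uncovered; a mixture of orientations is needed to consume vertices from the three sides at proportional rates.

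Concretely, I would first solve a small linear program: for the (up to) six orderings $\sigma$ of $(a,b,c)$ into $(V_1,V_2,V_3)$, find non-negative quotas $y_\sigma$ so that the total planned usage of each $V_i$ equals $|V_i|$. Feasibility of this LP is equivalent to the vector $(|V_1|,|V_2|,|V_3|)$ lying in the convex cone generated by the six permutations of $(a,b,c)$, which with $S=|V_1|+|V_2|+|V_3|$ and $s=a+b+c$ is described by $aS/s\le|V_i|\le cS/s$ for $i=1,2,3$. Both inequalities follow from the hypotheses $a\le b\le c$, $|V_1|\le|V_2|\le|V_3|$ and $|V_1|/a\ge|V_2|/b\ge|V_3|/c$. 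For example, $|V_1|\ge aS/s$ rearranges to $|V_1|/a\ge(|V_2|+|V_3|)/(b+c)$, which follows from $|V_2|\le b|V_1|/a$ and $|V_3|\le c|V_1|/a$; the remaining bounds are analogous. So non-negative $y_\sigma$ summing to $S/s$ exist.

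Next I would realise this quota greedily using regularity. At any stage, as long as the surviving sides $V_i'\subseteq V_i$ satisfy $|V_i'|\ge p|V_i|$ with $p\ge 2\e/d$, the restricted triple $(V_1',V_2',V_3')$ inherits $(\e/p,d)$-regularity with regularity parameter at most $d/2$, so by a standard counting/supersaturation argument (in the style of Proposition \ref{supersaturation}) one finds many copies of $K_{a,b,c}$ with the three vertex classes placed into $V_1',V_2',V_3'$ in any prescribed order. I would interleave the orientations according to $\lfloor y_\sigma\rfloor$ so that the sides are consumed at the prescribed rates, stopping a side once its residual falls below $\tfrac{c}{a}\e|V_i|$. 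Summing these per-side residuals gives a total uncovered count of at most $\tfrac{c}{a}\e(|V_1|+|V_2|+|V_3|)$, as required.

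The main obstacle I anticipate is the scheduling in the greedy phase: ensuring that no side crosses the regularity threshold before its planned quota is filled, and controlling the error from rounding the fractional quotas $y_\sigma$ to integers so that it contributes only $O(\e|V_i|)$ waste on each side. The factor $c/a$ in the final bound is exactly what absorbs the worst-case cumulative slack between the smallest role $a$ and the largest role $c$.
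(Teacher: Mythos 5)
Your approach is genuinely different from the paper's, and its structural core---the LP over the six orientations of $(a,b,c)$, with feasibility characterized by $aS/s\le|V_i|\le cS/s$ where $S=|V_1|+|V_2|+|V_3|$---is correct: both bounds do follow from \eqref{triplecondition} and $|V_1|\le|V_2|\le|V_3|$ exactly as you sketch (for $i=2$ one uses $|V_1|\le|V_2|\le|V_3|$ together with the $i=1,3$ bounds). The paper instead runs a single greedy pass in which the residual classes are \emph{re-sorted by size before every step}; it then removes one copy of $K_{a,b,c}$ in canonical orientation from the sorted triple, except that when the residuals are within $c-a$ of each other it removes a balanced $K_{k,k,k}$ instead. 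An induction shows the invariant $\tfrac{|U_1^j|}{a}\ge\tfrac{|U_2^j|}{b}\ge\tfrac{|U_3^j|}{c}$ survives each step, and the leftover bound $\tfrac{k}{a}\e m\le\tfrac{c}{a}\e S$ drops out of this invariant the moment the smallest residual falls below $\e m$. Both proofs implement "consume the three sides at proportional rates''; your version makes the role of the cone generated by permutations explicit, while the paper's re-sorting trick makes the plan self-correcting so that no scheduling analysis is needed.

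Two concrete issues in your sketch need repair. First, you invoke inherited $(\e/p,d)$-regularity and ask for $p\ge 2\e/d$; when $d=2\e$ (permitted by the hypothesis $2\e\le d$) this reads $p\ge 1$ and your greedy cannot remove a single copy. Inherited regularity is unnecessary here: the definition of $(\e,d)$-regularity of $(V_1,V_2,V_3)$ already controls the density on all subsets $A_i\subseteq V_i$ with $|A_i|\ge \e|V_i|$, so one applies it directly to the residual triples, as the paper does with the uniform threshold $\e m$. Second, your stopping rule "stop a side once its residual falls below $\tfrac{c}{a}\e|V_i|$'' gives total leftover $\tfrac{c}{a}\e S$ only if the three sides reach their thresholds simultaneously; that holds under exactly proportional consumption, but any actual interleaving incurs per-step slack of order $k$, so you must argue that the rounding of the $y_\sigma$ and the interleaving keep each residual within $O(k)$ of its planned trajectory (this is $o(\e m)$ for $m$ large, hence absorbable, but the argument is not spelled out, and it is precisely the bookkeeping that the paper's re-sorting dispenses with). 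Neither gap is fatal, but both must be closed for the proof to stand.
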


\begin{proof} We will greedily pick vertex disjoint $K_1,K_2,\dots$, $K_s$ until $|V_i \setminus \bigcup_{\ell=1}^s V(K_{\ell}) |<\e m$ for some $i\in [3]$, where each $K_{\ell}$ is a copy of $K_{a,b,c}$ or $K_{k,k,k}$ by the algorithm described below. 
This gives rise to a $K_{a,b,c}$-tiling because each copy of $K_{k,k,k}$ consists of three vertex disjoint copies of $K_{a,b,c}$. 
Our algorithm is as follows. 
For $i\in [3]$, let $U_i^0=V_i$. For $j\in [s]$, let 
\[
\left\{U_1^j, U_2^j, U_3^j \right\} = \left\{V_i\setminus \bigcup_{\ell=1}^jV(K_{\ell}): i\in [3] \right\}  \text{  such that  } |U_1^j|\le |U_2^j| \le |U_3^j|, 
\]
and $U^j = U_1^{j} \cup U_2^{j} \cup U_3^{j}$. 
In other words, $U_1^j, U_2^j, U_3^j$ are the subsets of $V_1, V_2, V_3$ obtained from removing the vertices of $K_1, \dots, K_j$ and arranged in the ascending order of size.
Suppose that we have already found $K_1,\dots, K_j$ and $|U_1^j|\ge \e m$.
We let $K_{j+1}$ be a copy of $K_{k,k,k}$ from $U^{j}$ if
\begin{align}\label{equaltriple}
|U_3^{j}|- |U_1^{j}| \le c - a;
\end{align}
otherwise we let $K_{j+1}$ be a copy of $K_{a,b,c}$ with $a$ vertices from $U_1^{j}$, $b$ vertices from $U_2^{j}$, and $c$ vertices from $U_3^{j}$. In either case this is possible because $|U_i^{j}|\geq \e m$ for $i\in [3]$; by the regularity, we have $d(U_1^{j}, U_2^{j}, U_3^{j})\geq d-\e\ge \e$ and
\[
e(U_1^{j},U_2^{j},U_3^{j})\geq \e |U_1^{j}||U_2^{j}||U_3^{j}|\geq \e^4 m^3.
\]
By Proposition \ref{supersaturation}, we can find a copy of $K_{k,k,k}$ or  $K_{a,b,c}$ from $U^{j}$. The algorithm terminates when $|U_1^s|< \e m$.  We need to show that  $|U^s|\le \frac ca \e (|V_1| + |V_2| + |V_3|)$. By \eqref{triplecondition}, $|V_1|+|V_2|+|V_3|\ge \frac kc m$ and thus $\frac ca \e (|V_1| + |V_2| + |V_3|)\ge \frac{k}{a}\e m$. So it suffices to show that $|U^s|\le \frac{k}{a}\e m$.

First, assume that \eqref{equaltriple} holds for some $0\le j<s$. In this case $K_{j+1}\cong K_{k,k,k}$ and
\[
|U_3^{j+1}| - |U_1^{j+1}| = |U_3^{j}|- |U_1^{j}| \leq c - a.
\]
Therefore $K_{\ell} \cong K_{k,k,k}$ for all $\ell > j$ and consequently  $|U_3^{s}| - |U_1^{s}|\le c -a$. Since $|U_1^s| < \e m$, it follows that $|U^s|< 3\e m +2(c-a)$. If $a=c$, then $|U^s| \le 3\e m=\frac ka \e m$ and we are done. Otherwise $\frac ka\ge 3+\frac 1a$. Since $m$ is large enough, it follows that $|U^s|< (3+\frac 1a)\e m\le \frac ka \e m$, as desired.

Second, assume that \eqref{equaltriple} fails for all $0\le j <s$. We claim that for all $0\le j\leq s$,
\begin{align}\label{regulartriple}
\frac{|U_1^j|}a \ge \frac{|U_2^j|}b \ge \frac{|U_3^j|}c.
\end{align}
This suffices because $|U_1^s|< \e m$ and \eqref{regulartriple} with $j=s$ together imply that $|U^s| \le (1+ \frac ba + \frac ca ) |U_1^s| < \frac ka \e m$.

Let us prove $\eqref{regulartriple}$ by induction. The $j=0$ case follows from \eqref{triplecondition} and the assumption $|V_1|\le |V_2|\le |V_3|$. Suppose that \eqref{regulartriple} holds for some $j\ge 0$.
By our algorithm, $K_{j+1}$ is a copy of $K_{a,b,c}$ with $a$ vertices from $U_1^{j}$, $b$ vertices from $U_2^{j}$, and $c$ vertices from $U_3^{j}$.  Let $\tilde{U}_i^j=U_i^{j}\setminus V(K_{j+1})$ for $i\in [3]$ and thus ${|\tilde{U}_1^{j}|}/a={|{U}_1^{j}|}/a-1$, ${|\tilde{U}_2^{j}|}/b= {|{U}_2^{j}|}/b-1$ and ${|\tilde{U}_3^{j}|}/c={|{U}_3^{j}|}/c-1$. 
By the inductive hypothesis,
\begin{equation}\label{tripleclaim1}
\frac{|\tilde{U}_1^{j}|}a \ge \frac{|\tilde{U}_2^{j}|}b \ge \frac{|\tilde{U}_3^{j}|}c.
\end{equation}

Since $|U_i^{j}| \ge \e m\ge b+c$ for all $i\in[3]$,
\[
b^2-a^2\leq (b-a)|U_1^{j}| \leq b|U_2^{j}|-a|U_1^{j}|
\]
and
\[
c^2-b^2\leq (c-b)|U_2^{j}| \leq c|U_3^{j}|-b|U_2^{j}|
\]
which implies that
\begin{align}\label{tripleclaim2}
\frac{|\tilde{U}_2^{j}|}a \ge \frac{|\tilde{U}_1^{j}|}b,\, \quad \text{ and } \quad \frac{|\tilde{U}_3^{j}|}b \ge \frac{|\tilde{U}_2^{j}|} c.
\end{align}

Now we separate cases according to the order of $|\tilde{U}_1^{j}|$, $|\tilde{U}_2^{j}|$ and $|\tilde{U}_3^{j}|$.  Since $|\tilde{U}_3^j| - |\tilde{U}_1^j| = |U_3^{j}| - |U_1^{j}| - (c-a) > 0$, we only have three cases.

\noindent\textbf{Case 1.} $|\tilde{U}_1^{j}| \le |\tilde{U}_2^{j}| \le |\tilde{U}_3^{j}|$. Then \eqref{regulartriple} for $j+1$ follows from \eqref{tripleclaim1} immediately.

\noindent\textbf{Case 2.} $|\tilde{U}_2^{j}| \le |\tilde{U}_1^{j}| \le |\tilde{U}_3^{j}|$. Together with \eqref{tripleclaim1} and \eqref{tripleclaim2}, we derive that
\[
\frac{|\tilde{U}_2^{j}|}a \ge \frac{|\tilde{U}_1^{j}|}b \ge \frac{|\tilde{U}_2^{j}|}b \ge \frac{|\tilde{U}_3^{j}|}c.
\]

\noindent\textbf{Case 3.} $|\tilde{U}_1^{j}| \le |\tilde{U}_3^{j}| \le |\tilde{U}_2^{j}|$. Together with \eqref{tripleclaim1} and \eqref{tripleclaim2}, we derive that
\[
\frac{|\tilde{U}_1^{j}|}a \ge \frac{|\tilde{U}_2^{j}|}b \ge \frac{|\tilde{U}_3^{j}|}b \ge \frac{|\tilde{U}
_2^{j}|}c.
\]
This implies that \eqref{regulartriple} holds for $j+1$ and we are done.
\end{proof}

When $a=b=c$, the proof of Lemma~\ref{lem:alm_til} is a standard application of the regularity method.
This was given implicitly in \cite{Khan1} and stated as \cite[Lemma 4.4]{LM1} without a proof. For completeness, we include the proof here.

\begin{proof}[Proof of Lemma~\ref{lem:alm_til} when $a=b=c$]
Let $0< 4\e = d \ll \min\{\r, \a\}$ and $t_0 = 1/\e$. 
Suppose $T$ and $n_2$ are the parameters returned by Proposition~\ref{lem:deg} with $\delta = 5/9 + \r$.
Let $H$ be a 3-graph on $n$ vertices with $\delta_1(H)\ge (\frac59 + \r) \binom{n}2$ for some sufficiently large $n\ge n_2$. 
We apply Proposition~\ref{lem:deg} and obtain an $(\e,t)$-regular partition $\cP$ with $t_0<t<T$ and a cluster hypergraph $\R=\R(\e, d, \cP)$ satisfying $\delta_1(\R)\ge (\frac59 + \r -\e-d)\binom t2$.  Note that each cluster is of size $n/t\ge n/T$. 
Suppose that $t\equiv r$ mod 3 for some $r \in \{0,1,2\}$. Let $\R'$ be the induced subgraph of $\R$ on clusters $V_{r+1}, \dots, V_t$.
Then $\delta_1(\R') \ge \delta_1(\R) - 2t \ge \left(\frac59 + \frac{\r}2 \right) \binom{t}2$.
We apply \cite[Theorem 6]{HPS}\footnote{We may alternatively use the exact result in \cite{Khan1, KOT}.}
to $\R'$ and get a perfect matching $M$. For each edge $e=\{V_{i}, V_{j}, V_{l}\}\in M$, Proposition~\ref{proalmost} provides a $K_{a, b, c}$-tiling that covers all but at most $\e(|V_i| + |V_j| + |V_l|)$ vertices of $V_i \cup V_j \cup V_l$. The union of these $K_{a, b, c}$-tilings covers all but at most
\[
|V_0| + \e(|V_1| + \cdots + |V_t|) + |V_1| + |V_2| \le 2\e n + 2n/t \le 4\e n\le  \alpha n
\]
vertices of $V(H)$, as desired.
\end{proof}

We assume that $a<c$ in the next two subsections.

\subsection{Fractional homomorphic tilings}

To obtain a large $K_{a,b,c}$-tiling in $H$ when $a<c$, we follow the idea of Bu\ss, H\`an and Schacht \cite{BHS} considering a fractional homomorphism from $K_{a,b,c}$ to the cluster hypergraph $\R$. Let us first define a fractional hom($K_{a,b,c}$)-tiling. As in previous sections, we simply write $K_{a, b, c}$ as $K$.

\begin{definition}\label{frac}
Given a 3-graph $H= (V, E)$, 
a function $h:V\times E\rightarrow [0,1]$ is called a fractional hom$(K)$-tiling of $H$ if
\begin{itemize}
 \item[(1)] $h(v,e)= 0$ if $v\not\in e$, 
 \item[(2)] $h(v)=\sum_{e\in E}h(v,e)\leq 1$,
 \item[(3)] for every $e\in E$ there exists a labeling $e=uvw$ such that $h(u,e)\le h(v,e)\le h(w,e)$ and
 \begin{align*}
 &\frac{h(u,e)}{a}\geq \frac{h(v,e)}b\geq \frac {h(w,e)}{c}.
 \end{align*}
 \end{itemize}
Given $e= uvw\in E$, we simply write $h(u,v,w)=(h(u,e), h(v,e),h(w,e))$. We denote by $h_{\min}$ the smallest non-zero value of $h(v,e)$
and by $w(h)$ the (total) weight of $h$:
\[
w(h)=\sum_{(v,e)\in V\times E}h(v,e).
\]
\end{definition}

For example, suppose that the vertex classes of $K$ are $X, Y , Z$ with $|X|=a$, $|Y|=b$ and $|Z|=c$. We obtain
a fractional hom$(K)$-tiling $h$ by letting $h(x,y,z)= \frac1{abc}(a, b, c)=(\frac1{bc},\frac1{ac},\frac1{ab})$ for every $xyz\in E(K)$ with $x\in X, y\in Y, z\in Z$.\footnote{In general, we write  $\lambda(x_1, x_2, x_3)= (\lambda x_1, \lambda x_2, \lambda x_3)$.}
 Then $w(h)=k$ (the largest possible) and $h_{\min}= \frac1{bc}$. We later refer to $(\frac1{bc},\frac1{ac},\frac1{ab})$ as the \emph{standard weight} of an edge of $K$ and refer to the function mentioned above as the \emph{standard weight function} on $K$. 

The following proposition shows that a fractional hom($K$)-tiling in the cluster hypergraph can be ``converted" to an integer $K$-tiling in the original hypergraph. 

\begin{proposition}\label{almost.frac}
Let $1\le a\le b\le c$ be integers.
Given $\e,\phi>0$, $d\ge 2\e/\phi$, and integer $T>0$, there exists $n_3\in \mathbb{Z}$ such that the following holds for all $0<t\le T$ and $n\ge n_3$.
Let $H$ be a $3$-graph on $n$ vertices with an $(\e, t)$-regular partition $\cP$ and a cluster hypergraph $\R=\R(\e, d,\cP)$. Suppose that there is a fractional hom$(K)$-tiling $h$ of $\R$ with $h_{\min}\geq \phi$. Then there exists a $K$-tiling of $H$ that covers at least $\left(1-2c\e/\phi \right) w(h) n/t $ vertices.
\end{proposition}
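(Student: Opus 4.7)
The plan is to use the fractional hom$(K)$-tiling $h$ of $\R$ as a blueprint: for each edge $e=\{V_i,V_j,V_l\}$ of $\R$, labelled as $(V_u,V_v,V_w)$ so that the inequalities in Definition~\ref{frac}(3) hold, I carve out disjoint subsets $A_u^e,A_v^e,A_w^e$ from $V_u,V_v,V_w$ respectively, of sizes roughly $h(u,e)m$, $h(v,e)m$, $h(w,e)m$, where $m=\lfloor n/t\rfloor$. Then I apply Proposition~\ref{proalmost} to each triple and take the union.

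First, the $A_u^e$'s can be made pairwise disjoint within each cluster: condition (2) of Definition~\ref{frac} gives $\sum_{e\ni u}h(u,e)=h(u)\le 1$, so taking $|A_u^e|=\lfloor h(u,e)m\rfloor$ fits inside $V_u$. By further shaving off at most $b+c$ vertices per edge, I can enforce both $|A_u^e|\le |A_v^e|\le |A_w^e|$ and $|A_u^e|/a\ge |A_v^e|/b\ge |A_w^e|/c$, matching the hypothesis of Proposition~\ref{proalmost}; these rounding adjustments inherit from the analogous inequalities among $h(u,e),h(v,e),h(w,e)$ with a loss of $O(1)$ per edge.

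Next, regularity is inherited. Since $h_{\min}\ge 1/(bc^2)$ and $d\ge 2bc^2\e$ gives $\e\le 1/(bc^2)\le h_{\min}$, every $A_u^e$ has size at least $h_{\min}m\ge \e|V_u|$. The standard inheritance property of weak regularity then shows that $(A_u^e,A_v^e,A_w^e)$ is $(\e/h_{\min},d')$-regular, hence $(bc^2\e,d')$-regular for some $d'\ge d-\e\ge d/2$. Because $2(bc^2\e)\le d\le 2d'$, the hypothesis of Proposition~\ref{proalmost} is met for the sub-triple, producing a $K$-tiling of $A_u^e\cup A_v^e\cup A_w^e$ missing at most
\[
\frac{c}{a}\cdot bc^2\e\cdot (|A_u^e|+|A_v^e|+|A_w^e|) \;\le\; bc^3\e\,(|A_u^e|+|A_v^e|+|A_w^e|)
\]
vertices.

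Finally, summing over all $e\in E(\R)$ (the resulting $K$-tilings are vertex-disjoint since the $A_u^e$'s are), the total coverage is at least
\[
(1-bc^3\e)\sum_{e\in E(\R)}(|A_u^e|+|A_v^e|+|A_w^e|) \;\ge\; (1-bc^3\e)\bigl(w(h)\,m - O(e(\R))\bigr),
\]
using $|A_u^e|\ge h(u,e)m - O(1)$. Because $e(\R)\le t^3$ and $m=\lfloor n/t\rfloor$, the combined rounding error is $O(t^3)$, which is negligible compared to $w(h)n/t$ once $n$ is chosen sufficiently large relative to $t$. Absorbing this slack upgrades the multiplicative factor from $1-bc^3\e$ to $1-2bc^3\e$, giving the claimed bound $(1-2bc^3\e)\,w(h)\,n/t$. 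The one part that needs care is the size bookkeeping after flooring and the minor re-adjustments needed to restore the strict ordering required by Proposition~\ref{proalmost}, but each adjustment costs $O(1)$ vertices per edge of $\R$, so nothing more delicate than $n\gg t$ is needed.
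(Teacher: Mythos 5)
Your proposal follows essentially the same route as the paper: partition each cluster into disjoint subsets proportional to the fractional weights, use inheritance of weak regularity (with $h_{\min}\ge 1/(bc^2)$ guaranteeing the sub-parts have size at least an $\e$-fraction of the cluster), and invoke Proposition~\ref{proalmost} for each edge carrying positive weight. However, one step is false as written: you set $m=\lfloor n/t\rfloor$ and claim that subsets of total size $\sum_{e\ni u}\lfloor h(u,e)m\rfloor$ fit inside $V_u$, but after the weak regularity lemma sets aside an exceptional set $V_0$ with $|V_0|\le\e n$, each cluster actually has size $\ell=(n-|V_0|)/t$, which can be as small as $(1-\e)n/t<m$; when $h(u)=1$ the proposed subsets would overflow $V_u$. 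The paper works with $\ell$ from the start, and the factor $(1-\e)$ from $\ell\ge(1-\e)n/t$ is precisely where most of the slack from $1-bc^3\e$ down to $1-2bc^3\e$ is spent --- the $O(t^3)$ flooring error that you highlight is negligible once $n\gg t$, but the cluster-size deficit is not. Replacing $m$ by $\ell$ (and scaling the $A_u^e$ accordingly) repairs the argument with no other changes, and then it matches the paper's proof step for step.
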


\begin{proof}
Let $\R'$ be the subhypergraph of $\R$ consisting of the hyperedges $e=uvw\in E(\R')$ with $h(u,e)$, $h(v,e)$, $h(w,e)\geq h_{\min}\ge \phi$. For each $u\in V(\R')$, let $V_u$ be the corresponding cluster of $H$. Since $\cP$ is an $(\e, t)$-regular partition, all the clusters have size $\ell$ for some $\ell \ge (1 - \e) n/t$.
In each $V_u$ we find disjoint subsets $V_u^e$ of size $h(u,e) \ell$ for all $e\in E(\R')$ with $u\in e$ -- this is possible because $\sum_{e\in E(\R')} h(u, e) \le 1$. 
Note that every edge $e=uvw\in E(\R')$ corresponds to an $(\e, d')$-regular triple $(V_u, V_v,V_w)$ for some $d'\ge d$. 
Hence for every $e=uvw\in E(\R')$, $(V_u^e,V_v^e,V_w^e)$ is $(\e/\phi, d')$-regular with at least $\phi \ell\ge (1 - \e)\phi n/t \ge (1 - \e)\phi n_3/T$ vertices in each part.
Because of Definition~\ref{frac} (3), the assumptions that $d\ge 2\e/\phi$ and $(1 - \e)\phi n_3/T$ is sufficiently large, we can apply Proposition \ref{proalmost} and obtain a $K$-tiling covering at least
\begin{align*}
\left(1-\frac{c}{a}\cdot \frac{\e}{\phi} \right) h(e) \ell \ge (1- c \e/\phi) h(e) (1 - \e) \frac{n}{t} \ge \left(1 - 2c\e/\phi \right)h(e) \frac{n}{t}
\end{align*}
vertices of $V_u \cup V_v \cup V_w$, where $h(e)=h(u,e)+h(v,e)+h(w,e)$.
Repeating this to all hyperedges of $\R'$, we obtain a $K$-tiling that covers at least
\[
\sum_{uvw\in E(\R')} \left(1- 2c\e/\phi \right) h(e) \frac{n}{t} = \left(1- 2c\e/\phi \right) w(h) \frac{n}{t}
\]
vertices of $H$.
\end{proof}

The key of the proof of Lemma~\ref{lem:alm_til} is that if a maximum $K$-tiling in $\R$ is not large enough, then we use the minimum degree condition to find a large fractional hom$(K)$-tiling of $\R$, which gives a large $K$-tiling in $H$ by Proposition~\ref{almost.frac}. The following two propositions show that we can find a fractional hom$(K)$-tiling $h$ on one or two copies of $K$ together with one or two vertices outside such that $w(h)$ is larger than the standard weight on these copies of $K$ alone.

Given a copy $K_1$ of $K$ and two vertices $u,u'\notin V(K_1)$, let $\L_1(K_1, u, u')$ denote the family of all $3$-graphs on $\{u, u'\}\cup V(K_1)$ whose edge set contains $E(K_1)$ and at least $a+1$ triples $u u' v$ with $v\in V(K_1)$.

\begin{proposition}\label{prop.frac.1copy}
Let $1\le a\le b\le c$ be integers with $a<c$ and $k=a+b+c$. 
Let $K_1$ be a copy of $K_{a, b, c}$ and let $u,u'\notin V(K_1)$ be two vertices.
For any $3$-graph $L\in \L_1(K_1, u, u')$, there is a fractional hom$(K)$-tiling $h$ of $L$ with $w(h)\geq k+\frac1{abc}$ and $h_{\min}\geq \frac1{bc^2}$.
\end{proposition}

\begin{proof}
Suppose the vertex classes of $K_1$ are $X, Y, Z$ with $|X|=a$, $|Y|=b$, and $|Z|=c$. Since $\deg(u u')\ge a+1=|X|+1$, we have $N(u u',Y\cup Z)\neq \emptyset$.

If there exists $z\in N(u u', Z)$, then we pick $x\in X$ and $y\in Y$ and assign weights $h(z, u, u')=(\frac1{bc},\frac1{ac},\frac1{ab})$,
\[
h(x, y, z)=\left(\frac1{bc}-\frac a{bc^2},\frac1{ac}-\frac1{c^2},\frac1{ab}-\frac1{bc} \right) = \frac{c-a}{abc^2} \left(a, b, c \right),
\]
and assign the standard weight to all other edges of $K_1$. Then $h$ is a fractional hom($K$)-tiling of $L$ with
$w(h)=k+\frac1{ab}+\frac1{ac}-\frac a{bc^2}-\frac1{c^2}\geq k+\frac1{abc}$
and $h_{\min} =\frac{c-a}{bc^2} \ge \frac1{bc^2}$.

Otherwise $N(u u', Z)=\emptyset$, then there exists $y\in N(u u',Y)$. First assume $a<b$. We assign $h(y, u, u')=(\frac1{bc},\frac1{ac},\frac1{ab})$,
\[
h(x, y, z)=\left(\frac1{bc}-\frac a{b^2c},\frac1{ac}-\frac1{bc},\frac1{ab}-\frac1{b^2} \right) = \frac{b-a}{ab^2c} \left(a, b, c \right)
\]
for some $x\in X$ and $z\in Z$, and the standard weight to all other edges. Then $h$ is a fractional hom($K$)-tiling with $w(h)= k+\frac1{ab}+\frac1{ac}-\frac a{b^2c}-\frac1{b^2}\geq k+\frac1{abc}$ and $h_{\min} =\frac{b-a}{b^2c} \ge \frac1{bc^2}$.
Second, we assume $a=b$. By the degree condition, we have $N(u u', X)\neq \emptyset$. Pick $x\in N(u u',X)$ and $z\in Z$.
By assigning $h(x, u, u')=h(y, u, u')=h(x, y, z)=(\frac1{2ac},\frac1{2ac},\frac1{2a^2})$  and the standard weight to all other edges, we get a fractional hom($K$)-tiling with $w(h)= k+\frac1{a^2}+\frac1{ac}-\frac1{2a^2}\geq k+\frac1{abc}$ and $h_{\min} = \frac1{2ac}\ge \frac{1}{bc^2}$ as $c\ge 2$.
\end{proof}

Given two vertex disjoint copies $K_1, K_2$ of $K$ and a vertex $u\not\in V(K_1)\cup V(K_2)$, let $\L_2(K_1, K_2, u)$ denote the family of all $3$-graphs on $\{u\} \cup V(K_1)\cup V(K_2)$ whose edge set contains $E(K_1)\cup E(K_2)$ and at least $\max\{a^2+2a(b+c), (a+b)^2\}+1$ triples $u v w$ with $v\in V(K_1)$ and $w\in V(K_2)$.

The following proposition shows that any 3-graph $L\in \L_2(K_1, K_2, u)$ has a hom$(K)$-tiling with weight greater than $2k$. In its proof we assign weights to an edge as follows. Suppose $0<\lambda\le 1$, then $(\frac ac \lambda, \frac bc \lambda, \lambda)$ satisfies (3) in Definition \ref{frac}. Furthermore, given $\mu_1, \mu_2 \ge 0$ such that $\frac ac \lambda +\mu_1\le \frac bc \lambda \le \lambda - \mu_2$, then $(\frac ac \lambda + \mu_1, \frac bc \lambda, \lambda-\mu_2)$ satisfies (3) in Definition \ref{frac} as well.

\begin{proposition}\label{propfractional}
Let $1\le a\le b\le c$ be integers with $a<c$ and $k=a+b+c$. 
Let $K_1, K_2$ be two vertex disjoint copies of $K_{a, b, c}$ and let $u\notin V(K_1)$ be a vertex.
For any $3$-graph $L\in \L_2(K_1,K_2,u)$, there exists a fractional hom$(K)$-tiling of $L$ with $w(h)\geq 2k+ \frac1{abc^2}$ and $h_{\min}\geq \frac1{bc^2}$.
\end{proposition}
\begin{proof}
For $i=1,2$, denote the vertex classes of $K_i$ by $X_i$, $Y_i$, $Z_i$ with $|X_i| =a$, $|Y_i|=b$, and $|Z_i|=c$.
Let $L_u$ be the bipartite graph on $V(K_1)\cup V(K_2)$ such that two vertices $v\in V(K_1)$ and $w\in V(K_2)$ are adjacent if and only if $uvw$ is an edge of $L$. 
Then $L_u$ satisfies the following properties.

\begin{enumerate}[(i)]
\item Since $\deg_{L}(u)\ge a^2+2a(b+c)+1$, $L_u$ must have an edge not incident to $X_1\cup X_2$.
\item Since $\deg_{L}(u)\ge (a+b)^2+1$, $L_u$ must have an edge incident to $Z_1\cup Z_2$.
\end{enumerate}

Let $\lambda=\frac1{abc}$.
Our proof is now divided into cases based on the values of $a, b$ and $c$.

\noindent \textbf{Case 1.} $b<c$.

First we assume that there is $z_1 z_2\in L_u$ for $z_1\in Z_1$ and $z_2\in Z_2$.
Let $x_i\in X_i$, $y_i\in Y_i$ for $i=1,2$.
In this case let $h(u, z_1, z_2)=(\lambda,\lambda,\lambda)$ and $h(x_1, y_1, z_1) = h(x_2, y_2, z_2) = (\frac1{bc}, \frac1{ac}, \frac1{ab} - \lambda)$.
Other edges of $K_1$ or $K_2$ receive the standard weight $(\frac1{bc},\frac1{ac},\frac1{ab})$. \emph{In the rest of the proof, any edge of $K_1$ or $K_2$ not specified receives the standard weight.}
Therefore we get a fractional hom$(K)$-tiling of $L$ with $w(h)= 2k + \lambda$ and $h_{\min}= \lambda$.
We thus assume $L_u[Z_1,Z_2]= \emptyset$ and proceed in two subcases.

\smallskip
\noindent \textbf{Case 1.1.} $a<b<c$.
We first assume that there exists $z_1 y_2\in L_u$ for $z_1\in Z_1$ and $y_2\in Y_2$.
Let $x_i\in X_i$ for $i=1,2$, $y_1\in Y_1$ and $z_2\in Z_2$.
In this case we let $h(y_2, z_1, u)=(\frac ac\lambda,\frac bc\lambda,\lambda)$, $h(x_1, y_1, z_1) = (\frac1{bc}, \frac1{ac}, \frac1{ab} - \frac bc \lambda)$, and $h(x_2, y_2, z_2) = (\frac1{bc}, \frac1{ac} - \frac ac \lambda, \frac1{ab} -\frac ab \lambda)$.
So we get a fractional hom$(K)$-tiling of $L$ with $w(h)= 2k+(1-\frac ab)\lambda\geq 2k+\frac1b\lambda$ and $h_{\min}= \frac ac\lambda$.

We thus assume that $L_u[Z_1, Y_2]= \emptyset$ and by symmetry, $L_u[Y_1, Z_2]= \emptyset$. By (i), it follows that $L_u[Y_1, Y_2]\neq \emptyset$. By (ii), without loss of generality, assume that $L_u[Z_1, X_2]\neq \emptyset$. Suppose $y_1y_2, z_1 x_2\in L_u$ with $y_1\in Y_1$, $y_2\in Y_2$, $z_1\in Z_1$ and $x_2\in X_2$.
Let $x_1\in x_1$ and $z_2\in Z_2$.
We let $h(y_1, y_2, u)=(\frac bc\lambda,\frac bc \lambda, \lambda)$, $h(x_2, u, z_1) = (\frac ac \lambda, \frac b{c}\lambda, \lambda)$,
$h(x_1, y_1, z_1) = \left(\frac1{bc}, \frac1{ac} - \frac bc \lambda, \frac1{ab} - \lambda \right)$, and $h(x_2, y_2, z_2) = \left(\frac1{bc} - \frac ac \lambda, \frac1{ac} - \frac bc \lambda, \frac1{ab} - \lambda \right)$.
So we get a fractional hom$(K)$-tiling of $L$ with $w(h)= 2k+\frac bc \lambda$ and $h_{\min}= \frac ac\lambda$.

\smallskip
\noindent \textbf{Case 1.2.} $a=b<c$.
We first assume that both $L_u[Z_1, X_2]\neq \emptyset$ and $L_u[Z_1, Y_2]\neq \emptyset$. Suppose $z_1x_2, z_1' y_2\in L_u$ with $z_1, z_1'\in Z_1$, $x_2\in X_2$ and $y_2\in Y_2$ (we may have $z_1=z_1'$).
We assign the weights $h(z_1, x_2, u) = h(z_1', y_2, u) = (\frac ac \lambda,\frac ac \lambda, \lambda)$.
If $a\ge 2$, then pick $x_1, x_1'\in X_1$, $y_1, y_1'\in Y_1$ and $z_2\in Z_2$, and assign $h(x_1, y_1, z_1)=h(x_1', y_1', z_1') = \left(\frac1{ac}, \frac1{ac}, \frac1{a^2} - \frac ac \lambda \right)$ and $h(x_2, y_2, z_2) = \left(\frac1{ac} - \frac ac \lambda, \frac1{ac} - \frac ac \lambda, \frac1{a^2} - \lambda \right)$.
If $a=1$, then pick $x_1\in X_1$, $y_1\in Y_1$ and $z_2\in Z_2$, and assign $h(x_2, y_2, z_2) = \left(\frac1{ac} - \frac{a}c \lambda, \frac1{ac} - \frac{a}c \lambda, \frac1{a^2} - \lambda \right)$ and 
\[
h(x_1, y_1, z_1)=h(x_1, y_1, z_1') = \begin{cases}
\left(\frac1{ac}, \frac1{ac}, \frac1{a^2} - \frac {2a}{c} \lambda \right)\text{ if }z_1=z_1', \\
\left(\frac1{ac}, \frac1{ac}, \frac1{a^2} - \frac {a}{c} \lambda \right) \text{ if }z_1\neq z_1'.
\end{cases}
\]
In all cases we get a fractional hom$(K)$-tiling of $L$ with $w(h)= 2k+ \lambda$ and $h_{\min}= \frac ac\lambda$.

We may thus assume that at least one of $L_u[Z_1, X_2]$ and $L_u[Z_1, Y_2]$ is empty, and by symmetry, at least one of $L_u[X_1, Z_2]$ and $L_u[Y_1, Z_2]$ is empty. Since $a=b$, $X_i$ and $Y_i$ ($i=1, 2$) play the same role.
Without loss of generality, assume that $L_u[Z_1, Y_2]=L_u[Y_1, Z_2]=\emptyset$.
Furthermore, we observe that $L_u[Z_1, X_2]\neq \emptyset$ and $L_u[X_1, Z_2]\neq \emptyset$ -- otherwise, as $L_u[Z_1, Z_2]= \emptyset$, it follows that $\deg_{L}(u)\le 4a^2 + ac < a^2 + 2a(b+c)$, a contradiction.

Suppose $z_1 x_2, x_1 z_2\in L_u$, where $z_1\in Z_1$, $x_2\in X_2$, $x_1\in X_1$, $z_2\in Z_2$.
By (i), there exists $y_1 y_2\in L_u$, where $y_1\in Y_1$, $y_2\in Y_2$ (see Figure 1).
We assign the weights $h(u, x_2, z_1) = h(u, x_1, z_2) = h(y_1, y_2, u) = (\frac ac \lambda,\frac ac \lambda, \lambda)$ and $h(x_1, y_1, z_1)=h(x_2, y_2, z_2)=(\frac1{ac} - \frac ac \lambda, \frac1{ac} - \frac ac \lambda, \frac1{a^2} - \lambda)$.
This gives a fractional hom$(K)$-tiling of $L$ with $w(h)= 2k+ \lambda+\frac {2a}c \lambda$ and $h_{\min}= \frac ac\lambda$.
Note that $h(u)=\frac {2a}c\lambda +\lambda=\frac{2a+c}{a^2 c^2}\le 1$ because $a\ge 1$ and $c\ge 2$.
Thus this weight assignment is possible.

\begin{figure}
\begin{center}
\begin{tikzpicture}
[inner sep=2pt,
   vertex/.style={circle, draw=blue!50, fill=blue!50},
   ]
\draw (0,2) ellipse (20pt and 10pt);
\draw (0,1) ellipse (20pt and 10pt);
\draw (0,0) ellipse (30pt and 15pt);
\draw (3,2) ellipse (20pt and 10pt);
\draw (3,1) ellipse (20pt and 10pt);
\draw (3,0) ellipse (30pt and 15pt);
\node at (-1,2) {$X_1$};
\node at (-1,1) {$Y_1$};
\node at (-1.3,0) {$Z_1$};
\node at (4,2) {$X_2$};
\node at (4,1) {$Y_2$};
\node at (4.3,0) {$Z_2$};
\node at (0,2) [vertex, label=left:$x_1$] {};
\node at (0,1) [vertex, label=left:$y_1$] {};
\node at (0,0) [vertex, label=left:$z_1$] {};
\node at (3,2) [vertex, label=right:$x_2$] {};
\node at (3,1) [vertex, label=right:$y_2$] {};
\node at (3,0) [vertex, label=right:$z_2$] {};
\draw (0,2) -- (3,0);
\draw (0,1) -- (3,1);
\draw (0,0) -- (3,2);
\draw[dashed] (0,0) -- (3,0);
\draw[dashed] (0,1) -- (3,0);
\draw[dashed] (0,0) -- (3,1);
\end{tikzpicture}

\caption{$L_u$ in the last subcase of Case 1.2.}
\end{center}
\end{figure}
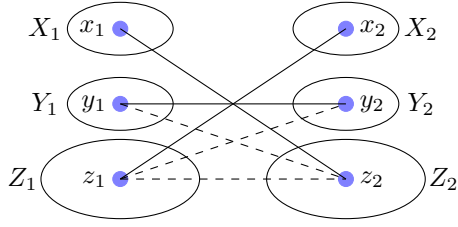

\medskip
\noindent \textbf{Case 2.} $a<b=c$.

Since $b=c$, $Y_i$ and $Z_i$ ($i=1, 2$) play the same role. Thus by (i), without loss of generality, assume that there exists $z_1z_2\in L_u$ for $z_1\in Z_1$ and $z_2\in Z_2$. Furthermore, generalizing (ii), we know that
there must be an edge incident to $Y_1\cup Y_2$ and without loss of generality, say that edge is incident to $Y_1$.
We now proceed with three cases.

\smallskip
\noindent \textbf{Case 2.1.} There exists $y_1 x_2\in L_u$ where $y_1\in Y_1$ and $x_2\in X_2$.
Pick $x_1\in X_1$ and $y_2\in Y_2$. 
We assign $h(u, z_1, z_2) = (\lambda, \lambda, \lambda)$, $h(x_2, y_1, u) = (\frac ac\lambda, \lambda, \lambda)$, $h(x_1, y_1, z_1) = \left(\frac1{c^2}, \frac1{ac} - \lambda, \frac1{ac} - \lambda \right)$, and $h(x_2, y_2, z_2) = \left(\frac1{c^2} - \frac ac \lambda, \frac1{ac} - \lambda, \frac1{ac} - \lambda \right)$.
Thus, we get a fractional hom$(K)$-tiling of $L$ with $w(h)= 2k+ \lambda$ and $h_{\min}= \frac ac\lambda$.

\smallskip
\noindent \textbf{Case 2.2.} There exists $y_1 y_2\in L_u$ where $y_1\in Y_1$ and $y_2\in Y_2$.
Pick $x_i\in X_i$ for $i=1,2$. 
We assign the weights $h(u, z_1, z_2) = h(u, y_1, y_2) = (\lambda, \lambda, \lambda)$ and
$h(x_1, y_1, z_1) = h(x_2, y_2, z_2) = \left(\frac1{c^2}, \frac1{ac} - \lambda, \frac1{ac} - \lambda \right)$
and get a fractional hom$(K)$-tiling of $L$ with $w(h)= 2k+ 2\lambda$ and $h_{\min}= \lambda$.

\smallskip
\noindent \textbf{Case 2.3.} There exists $y_1 z_2'\in L_u$ where $y_1\in Y_1$ and $z_2'\in Z_2$ (it is possible to have $z_2=z_2'$).
We assign the weights $h(z_2, u, z_1) = h(z_2', u, y_1) = (\frac ac\lambda, \lambda, \lambda)$.
Pick $x_1\in X_1$, $x_2\in X_2$ and distinct $y_2, y_2'\in Y_2$, which is possible as $b>a\ge 1$.
Let $h(x_1, y_1, z_1) = \left(\frac1{c^2}, \frac1{ac} - \lambda, \frac1{ac} - \lambda \right)$ and
$h(x_2, y_2, z_2) = h(x_2, y_2', z_2') = \left(\frac1{c^2}, \frac1{ac} - \frac ac \lambda, \frac1{ac} - \frac ac \lambda \right)$.
Thus, we get a fractional hom$(K)$-tiling of $L$ with $w(h)= 2k+ 2\lambda - \frac {2a}c \lambda \geq 2k+\frac 2c\lambda$ and $h_{\min}= \frac ac\lambda$.

In all cases we obtain a fractional hom$(K)$-tiling with $w(h)\ge 2k+\frac{\lambda}c = 2k+\frac{1}{abc^2}$ and $h_{\min}\ge \frac ac \lambda=\frac1{bc^2}$.
\end{proof}

\subsection{Proof of Lemma \ref{lem:alm_til} when $a<c$}
Let $H$ be a $3$-graph on $n$ vertices. Given $0\le \b \le1$, a $K$-tiling of $H$ is called \emph{$\b$-deficient} if it covers all but at most $\b n$ vertices of $V(H)$.

\begin{proposition}\label{deficient}
Given $0<d\le 3/5$ and $\b, \rho>0$, there exists an $n_0$ such that the following holds. If every $3$-graph $H$ on $n> n_0$ vertices with $\delta_1(H)\geq d\binom n2$ has a $\b$-deficient $K$-tiling, then every $3$-graph $H'$ on $n' >\max\{n_0, 5\}$ vertices with $\delta_1(H')\geq (d-\rho)\binom {n'}2$ has a $(\beta+2k\rho)$-deficient $K$-tiling.
\end{proposition}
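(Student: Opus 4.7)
The plan is to deduce this monotonicity statement from the hypothesis by a vertex-addition trick: take $H'$ and adjoin a set $T$ of $t$ new vertices, declaring every triple incident to $T$ to be an edge. This produces a 3-graph $H$ on $n = n' + t$ vertices whose minimum degree is large enough to apply the hypothesis, at the cost of only a controllably small loss of coverage when we later discard the copies of $K$ that touch $T$.

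Concretely, for each $v\in V(H')$ we have
\[
\deg_H(v) \ge (d-\rho)\binom{n'}{2} + t(n'-1) + \binom{t}{2},
\]
and for each $u\in T$ trivially $\deg_H(u)=\binom{n-1}{2}\ge d\binom{n}{2}$. Using $\binom{n}{2}=\binom{n'}{2}+tn'+\binom{t}{2}$, the inequality $\delta_1(H)\ge d\binom{n}{2}$ reduces to
\[
t\bigl((1-d)n' - 1\bigr) + (1-d)\binom{t}{2} \ge \rho \binom{n'}{2}.
\]
Since $d\le 3/5$, i.e.\ $1-d\ge 2/5$, any $t$ in the window $\tfrac{\rho n'}{2(1-d)} \lesssim t \le 2\rho n'$ satisfies this; I would choose $t = \lceil 3\rho n'/2\rceil$, which lies in the window once $n'>5$, and also ensures $n=n'+t>n_0$.

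Next, I would invoke the hypothesis on $H$ to obtain a $\beta$-deficient $K$-tiling $\mathcal{T}$ of $H$, leaving at most $\beta n$ vertices uncovered. Discarding from $\mathcal{T}$ every copy of $K$ that meets $T$ produces a $K$-tiling $\mathcal{T}'$ of $H'$. Because the copies in $\mathcal{T}$ are vertex-disjoint and $|T|=t$, at most $t$ copies are discarded, each containing at most $k-1$ vertices of $V(H')$. Therefore the number of vertices of $V(H')$ not covered by $\mathcal{T}'$ is at most
\[
\beta n + (k-1) t \;\le\; \beta n' + (\beta + k - 1) t \;\le\; \beta n' + kt \;\le\; (\beta + 2k\rho)\, n',
\]
using $t\le 2\rho n'$, which gives the desired $(\beta+2k\rho)$-deficient tiling.

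The only real obstacle is the arithmetic of choosing $t$: it must be large enough to lift the minimum degree from $d-\rho$ up to $d$ (forcing $t\gtrsim \rho n'/(2(1-d))$), yet small enough that the deleted copies contribute at most $2k\rho n'$ to the deficiency (forcing $t\le 2\rho n'$). These two requirements are compatible precisely because $1-d$ is bounded below; the hypothesis $d\le 3/5$ provides a comfortable gap, whereas the same argument would degrade as $d\to 1$.
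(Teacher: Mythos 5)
Your proposal is correct and takes essentially the same approach as the paper: adjoin a blob of roughly $2\rho n'$ universal vertices to lift the minimum degree from $(d-\rho)\binom{n'}{2}$ to $d\binom{n}{2}$, apply the hypothesis, and discard the copies of $K$ meeting the new vertices. The only difference is cosmetic: the paper fixes $t=2\rho n'$, which makes both the degree calculation and the bound $t\le 2\rho n'$ hold already for $n'\ge 5$, whereas your $t=\lceil 3\rho n'/2\rceil$ requires $n'$ somewhat larger for these estimates, a harmless rounding issue.
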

\begin{proof}
Let $H'$ be a $3$-graph on $n'$ vertices with $\delta_1(H')\geq (d-\rho)\binom {n'}2$. By adding a set $A$ of $2\rho n'$ new vertices and all the triples of $V(H') \cup A$ that intersect $A$ as edges, we obtain a $3$-graph $H$ on $n=n'+ 2\rho n' $ vertices. Thus
\begin{align*}
\delta_1(H)
&= \delta_1(H') +2\rho n'(n'-1)+\binom{2\rho n'}2 \geq (d-\rho)\binom{n'}2+ 4\rho \binom{n'}{2} +\binom{2\rho n'}2.
\end{align*}
Note that $3\rho \binom{n'}{2} \ge 2d \rho n'^2$ because  $d\le 3/5$ and $n'\ge 5$. Thus, $\delta_1(H)\ge d\binom{n'}2+2d\rho {n'}^2+d\binom{2\rho n'}2=d \binom n2$. By assumption, $H$ has a $\b$-deficient $K$-tiling. After removing at most $2\rho n'$ copies of $K$ that intersect $A$, we obtain a $(\beta+2k\rho)$-deficient $K$-tiling of $H'$.
\end{proof}

\medskip
\begin{proof}[Proof of Lemma \ref{lem:alm_til} when $a<c$]
Since $a<c$, we have $k\ge 4$.
Let $\delta=\max\{1-(\frac {b+c}{k})^2,(\frac {a+b}{k})^2\}$.
Since $a\le b\le c$, it follows that $\delta \le \max\{ 5/9, 4/9\} = 5/9$.
Without loss of generality, assume that $0<\r\le \min\{3/5 - \delta, 2\delta, a/(3k)\}$.
Assume for a contradiction that there is an $\a$ such that for all $n_0$ there is some $3$-graph $H$ on $n>n_0$ vertices with $\delta_1(H)\geq (\delta+\gamma)\binom n2$ but which does not contain an $\a$-deficient $K$-tiling. Let $\a_0$ be the supremum of all such $\a$.

Let $\e\ll \r\a_0$. By the definition of $\a_0$, there is an integer $n_0$ such that
\begin{equation}
\label{eq:a0+e}
\text{all $3$-graphs $H$ on $n>n_0$ vertices with $\delta_1(H)\geq (\delta+\gamma)\binom n2$ have an $(\a_0+\e)$-deficient $K$-tiling.}
\end{equation}
We may also assume that $n_0$ is sufficiently large so that we can apply Proposition~\ref{supersaturation} with $r=3$, $m=1$, $l_1=a$, $l_2=b$, $l_3=c$ on $3$-graphs of order at least $\a_0 n_0/2$.
Our goal is to show that there exists an $n_1$ such that all $3$-graphs $H$ on $n>n_1$ vertices with $\delta_1(H)\geq (\delta+\gamma)\binom n2$ have an $(\a_0-\e)$-deficient $K$-tiling, thus contradicting the definition of $\a_0$.

Let $n_2$ and $T$ be the integers returned from Proposition~\ref{lem:deg} with inputs $\e$, $d=2b c^2 \e$, $t_0=\max\{n_0, k/\e\}$. 
Let $n_3$ be the integer returned from Proposition~\ref{almost.frac} with inputs $\e, \phi=1/(bc^2), d$ and $T$.
Let $n_1=\max\{n_0, n_2, n_3\}$ and let $H$ be a $3$-graph on $n>n_1$ vertices with $\delta_1(H)\geq (\delta+\gamma)\binom n2$. We assume that $H$ does \emph{not} contain an $(\a_0-\e)$-deficient $K$-tiling -- otherwise we are done.
After applying Proposition~\ref{lem:deg} to $H$ with the constants chosen above, we get
an $(\e,t)$-regular partition $\cP$ with $t_0<t<T$ and a cluster hypergraph $\R=\R(\e, d, \cP)$ on $t>t_0$ vertices with $\delta_1(\R)\geq (\delta+\gamma - (2bc^2+1)\e) \binom t2$. By \eqref{eq:a0+e} and assumption $\delta + \gamma \le 3/5$, we can apply Proposition \ref{deficient} and obtain an $(\alpha_0 + \e+2k(2bc^2+1)\e)$-deficient $K$-tiling of $\R$.
Let $\M=\{K_1,K_2,\dots,K_m\}$ be a largest $K$-tiling in $\R$
and let $U$ be the set of uncovered vertices.

\begin{claim}\label{gainweight}
Let $h$ be a fractional hom$(K)$-tiling of $\R$ with $h_{\min}\geq \frac1{bc^2}$. Then $w(h)<(1-\a_0+\sqrt\e/2)t\le  mk+\sqrt{\e} t$.
\end{claim}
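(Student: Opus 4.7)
The plan is to verify the two inequalities in the claim separately. The right-hand one is essentially a reformulation of the fact that $|U|/t$ is already small, while the left-hand strict inequality is a contradiction argument powered by Proposition~\ref{almost.frac}.

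For the right-hand inequality $(1-\a_0+\sqrt\e/2)t\le mk+\sqrt\e\,t$, rearranging gives $|U|=t-mk\le(\a_0+\sqrt\e/2)t$, which I would establish using the maximality of $\M$. In the paragraph preceding the claim we already applied Proposition~\ref{deficient} to \eqref{eq:a0+e} to obtain that $\R$ itself admits an $(\a_0+\e+2k(2bc^2+1)\e)$-deficient $K$-tiling. Since $\M$ is a largest $K$-tiling of $\R$, it covers at least as many vertices as that tiling, so $|U|\le(\a_0+\e+2k(2bc^2+1)\e)\,t$. The error $\e(1+2k(2bc^2+1))$ is at most $\sqrt\e/2$ once $\e$ is small enough, which follows from the hierarchy $\e\ll\r\a_0$ stipulated at the start of the proof.

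For the left-hand strict inequality I would argue by contradiction: suppose $w(h)\ge(1-\a_0+\sqrt\e/2)t$. The cluster hypergraph $\R=\R(\e,d,\cP)$ was produced with density parameter $d=2bc^2\e$, and the given fractional hom$(K)$-tiling has $h_{\min}\ge 1/(bc^2)$, so Proposition~\ref{almost.frac} applies and yields a $K$-tiling of $H$ covering at least
\[
(1-2bc^3\e)\,w(h)\,\frac{n}{t}\ge (1-2bc^3\e)(1-\a_0+\sqrt\e/2)\,n
\]
vertices of $H$. The number of uncovered vertices is therefore at most
\[
\bigl(\a_0-\sqrt\e/2+2bc^3\e(1-\a_0+\sqrt\e/2)\bigr)n,
\]
and since $\e$ is sufficiently small, this is at most $(\a_0-\e)n$. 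This produces an $(\a_0-\e)$-deficient $K$-tiling of $H$, contradicting the standing assumption (made at the start of the $a<c$ case of Lemma~\ref{lem:alm_til}) that no such tiling exists.

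I do not expect a real obstacle here; the claim is bookkeeping. The only care is in the choice of the $\sqrt\e/2$ buffer, which must exceed the $O(\e)$ loss incurred when Proposition~\ref{almost.frac} converts a fractional hom$(K)$-tiling on $\R$ into an integer $K$-tiling on $H$, yet remain absorbable into the gap between $\a_0$ and $\a_0-\e$. Both conditions are guaranteed by the hierarchy $\e\ll\r\a_0$ together with $\e$ being small in terms of $b,c$. The point of Claim~\ref{gainweight} is precisely to record that $w(h)\le mk+O(\sqrt\e\,t)$ for every fractional hom$(K)$-tiling of $\R$ with $h_{\min}\ge 1/(bc^2)$; the subsequent weight-gain arguments based on Propositions~\ref{prop.frac.1copy} and~\ref{propfractional}, which exhibit fractional hom$(K)$-tilings whose weight exceeds $mk$ by an $\e$-independent positive constant, will then contradict this bound and complete the proof of Lemma~\ref{lem:alm_til}.
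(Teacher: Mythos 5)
Your proposal is correct and follows the same two-part strategy as the paper: the right-hand inequality is obtained by bounding $|U|$ via the $(\a_0+\e+2k(2bc^2+1)\e)$-deficient tiling of $\R$ and the maximality of $\M$, and the left-hand strict inequality is obtained by contradiction through Proposition~\ref{almost.frac}, absorbing the $O(\e)$ conversion loss into the $\sqrt\e/2$ buffer. The bookkeeping matches the paper's.
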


\begin{proof}
We know that $|U|\le (\alpha_0 + \e+2k(2bc^2+1)\e)t \le (\alpha_0 + 5k b c^2\e)t$. As $\e \ll 1$,  it follows that
\[
mk+\sqrt\e t \ge (1 - \a_0 - 5kbc^2\e)t+\sqrt \e t\ge (1-\a_0+\sqrt\e/2)t.
\]
So it suffices to show that $w(h)<(1-\a_0+\sqrt\e/2)t$. Suppose this is not the case. By Proposition \ref{almost.frac}, there is a $K$-tiling of $H$ that covers at least
\begin{align*}
\left(1- 2bc^3 \e \right)w(h) \frac nt
\geq \left(1-2bc^3\e\right) (1 - \a_0 +\sqrt \e/2)t \frac nt \geq\left(1-\a_0+\e\right)n
\end{align*}
vertices (as $\e\ll 1$). Therefore it is an $(\a_0-\e)$-deficient $K$-tiling, contradicting our assumption on $H$.
\end{proof}

In the rest of the proof we will derive a contradiction to Claim \ref{gainweight}. Immediately Claim \ref{gainweight} implies that
\begin{equation}\label{leftover}
|U|\geq \frac {\alpha_0}2t
\end{equation}
otherwise $\mathcal M$ gives a fractional hom$(K)$-tiling $h$ with $w(h)= mk\ge (1- {\a_0}/2)t \ge (1-\a_0 +\sqrt\e/2)t$, as $\e \ll \a_0$.

Let $E_3=\{e\in E(\R): e\subseteq U\}$ and $E_2=\{e\in E(\R): |e\cap U|=2\}$.

\begin{claim}\label{edgeswithu}
 $|E_3|\leq \r \binom {|U|}3/2$ and $|E_2|\leq \delta\binom {|U|}2 mk$.
\end{claim}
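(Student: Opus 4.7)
The plan is to prove the two bounds separately: the first by supersaturation tied to the maximality of $\M$, and the second by exhibiting a weight-gaining fractional hom$(K)$-tiling that contradicts Claim~\ref{gainweight}. For $|E_3|$, if $|E_3|>\r\binom{|U|}{3}/2$, then $\R[U]$ has more than $\r|U|^3/12$ edges; applying Proposition~\ref{supersaturation} to $\R[U]$ with $r=3$, $(l_1,l_2,l_3)=(a,b,c)$, $\mu=\r/12$, and the trivial one-part partition of $U$ yields at least one copy of $K_{a,b,c}$ inside $U$, which could be added to $\M$, contradicting its maximality.

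For $|E_2|$, the key arithmetic input is the lower bound
\[
\delta\;\ge\;1-\left(\frac{b+c}{k}\right)^{2}=\frac{a(a+2b+2c)}{k^2},
\]
which implies $(\delta k-a)/(k-a)\ge a/k$. Suppose for contradiction that $|E_2|>\delta mk\binom{|U|}{2}$. For each $j\in[m]$ let $f_j$ denote the number of pairs $\{u_1,u_2\}\in\binom{U}{2}$ with $\deg_\R(u_1u_2,V(K_j))\ge a+1$. Since every codegree is at most $k$,
\[
|E_2|=\sum_{j=1}^m\sum_{\{u_1,u_2\}\in\binom{U}{2}}\deg_\R(u_1u_2,V(K_j))\;\le\;am\binom{|U|}{2}+(k-a)\sum_j f_j,
\]
and the hypothesis, combined with the bound on $\delta$ above, forces $\sum_j f_j>(a/k)\,m\binom{|U|}{2}$.

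Next I would greedily extract $S$ pairwise disjoint triples $(K_{j_s},u_{s,1},u_{s,2})$ with distinct copies, distinct $U$-vertices, and $\deg_\R(u_{s,1}u_{s,2},V(K_{j_s}))\ge a+1$. After $s$ rounds at most $s\binom{|U|}{2}+2sm|U|$ good triples have been eliminated (the first term accounting for used copies, the second for used $U$-vertices), so the process runs for $S=\Omega(\min(m,|U|))=\Omega(\alpha_0 t)$ steps, using $|U|\ge\alpha_0 t/2$ from \eqref{leftover}. For each selected triple, $\R[V(K_{j_s})\cup\{u_{s,1},u_{s,2}\}]$ lies in $\L_1(K_{j_s},u_{s,1},u_{s,2})$, so Proposition~\ref{prop.frac.1copy} supplies a local fractional hom$(K)$-tiling of weight $\ge k+1/(abc)$ with $h_{\min}\ge 1/(bc^2)$. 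Combined with the standard weight function on the remaining members of $\M$, these local tilings yield a global fractional hom$(K)$-tiling $h^*$ of $\R$ with
\[
w(h^*)\;\ge\;mk+\frac{S}{abc}.
\]
Using $mk\ge(1-\alpha_0-5kbc^2\e)t$ and the hierarchy $\e\ll\r\alpha_0$ (so that $\sqrt{\e}$ is negligible compared to the gain $\Omega(\alpha_0/(abc))$), this lower bound exceeds $(1-\alpha_0+\sqrt{\e}/2)t$, contradicting Claim~\ref{gainweight}.

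The main obstacle will be making the greedy extraction quantitatively tight: carefully tracking how many good triples are eliminated by past choices, and confirming that the resulting $S$ makes $S/(abc)$ strictly exceed the slack $(\sqrt{\e}/2+O(\e))t$ in Claim~\ref{gainweight}. This is where the precise hierarchy $\e\ll\r\alpha_0$ chosen at the start of the proof of Lemma~\ref{lem:alm_til} becomes essential, since the supply $\Omega(m|U|^2)$ of good triples must survive the $\Theta(\min(m,|U|))$ greedy deletions.
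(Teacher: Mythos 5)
Your bound on $|E_3|$ is exactly the paper's argument. For $|E_2|$ you take a genuinely different route to the same target: where the paper fixes a \emph{maximum} matching $\A_0$ in the set $\A$ of good triples $(i,u,u')$ and lower-bounds $|\A_0|$ by counting \emph{non-edges} with two ends in $U$, you count $E_2$ directly via the codegree dichotomy $\deg_\R(u_1u_2,V(K_j))\le a$ or $\le k$, getting a lower bound on the supply $\sum_j f_j=|\A|$ of good triples, and then extract a matching greedily. The arithmetic step $(\delta k-a)/(k-a)\ge a/k$ is correct and plays exactly the role of the paper's inequality $(1-\delta)k\le (k-a)^2/k$. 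Your greedy extraction yields a matching of size $\Omega(\min(m,|U|))$, which is in fact better than the $\gamma\alpha_0 m$ the paper proves, and the conversion via Proposition~\ref{prop.frac.1copy} to a weight-gaining fractional hom$(K)$-tiling is the same.

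There is, however, a real gap in the final quantitative step. You assert $S=\Omega(\min(m,|U|))=\Omega(\alpha_0 t)$, citing only $|U|\ge\alpha_0 t/2$; but the $\min$ is controlled by whichever of $m,|U|$ is smaller, and you have no corresponding lower bound on $m$ of order $\alpha_0 t$. Your later input $mk\ge(1-\alpha_0-5kbc^2\e)t$ only gives $m=\Omega\bigl((1-\alpha_0)t/k\bigr)$, and $1-\alpha_0$ is a quantity the hierarchy $\e\ll\gamma\alpha_0$ does not control; if $\alpha_0$ is close to $1$, the gain $S/(abc)$ could fail to dominate $\sqrt{\e}\,t$ without an additional stipulation such as $\e\ll 1-\alpha_0$. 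The paper sidesteps this entirely by using the \emph{first part of the claim} to establish an $\alpha_0$-independent lower bound on $m$: if $|U|>\frac{b+c}{k}t$, then $\delta_1(\R[U])\ge\tfrac{\gamma}{2}\binom{t}{2}$, forcing $|E_3|>\gamma\binom{|U|}{3}/2$ and contradicting the $|E_3|$ bound; hence $|U|\le\frac{b+c}{k}t$, so $mk\ge\frac{a}{k}t$ and $m\ge\frac{a}{k^2}t$. You should insert this step (or explicitly strengthen the hierarchy) before concluding that the gain beats $\sqrt{\e}\,t$; everything else in your argument then goes through.
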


\begin{proof}
By \eqref{leftover} and Proposition \ref{supersaturation}, we have $|E_3|\leq \r \binom {|U|}3/2$, as otherwise there exists a copy of $K$ in $U$, contradicting the maximality of $\mathcal M$.

Suppose, to the contrary, that $|E_2|> \delta\binom {|U|}2 mk$.
Let $\A$ be the set of all triples $i u u'$, $i\in [m]$, $u\ne u'\in U$ such that $u u'$ is adjacent to at least $a+1$ vertices in $K_i$. By the definition of $\L_1$, $i u u'\in \A$ if and only if $\R[V(K_i)\cup \{u, u'\}]\in \L_1(K_i, u, u')$. Let $\A_0$ be a largest matching in $\A$.
By the maximality of $\A_0$, for any $i\in [m] \setminus V(\A_0)$ and any $u\ne u' \in U\setminus V(\A_0)$,  at least $k-a$ vertices of $K_i$ are not adjacent to $u u'$.
Counting the number of non-edges $e\not\in E(\R)$ with $|e\cap U|=2$, we have
\[
(k-a)(m-|\A_0|)\binom{|U|-2|\A_0|}{2}\le \binom {|U|}2 mk - |E_2| < (1-\delta) \binom{|U|}2 m k.
\]
Since $(1-\delta) k \le (\frac{b+c}k)^2 k =\frac{(k-a)^2}k$, it follows that
\begin{equation}
\label{eq:A0}
(m-|\A_0|)\binom{|U|-2|\A_0|}{2}\le \frac{k-a}k m\binom {|U|}2.
\end{equation}
We claim that $|\A_0|\ge \r \a_0 m$. Indeed, \eqref{leftover} implies that $|U|\ge \a_0 t/2 \ge \a_0 mk/2\ge 2\a_0 m$ (as $k\ge 4$).
If $|\A_0|< \r \a_0 m$, then $m-|\A_0|\ge (1-\r \a_0)m$ and $|U|-2|\A_0|\ge |U| - 2\r \a_0 m\ge (1-\r)|U|$.
Thus \eqref{eq:A0} implies that
\[
\frac{k-a}k m\binom {|U|}2\ge (1-\r \a_0) m\binom{(1-\r)|U|}{2}\ge (1-\r \a_0)(1-2\r) m\binom{|U|}{2}> (1-3\r)m\binom{|U|}2
\]
contradicting $\r \le  \frac{a}{3k}$.
Now let $\A'\subseteq \A_0$ be of size $\r \a_0 m$.
By Proposition \ref{prop.frac.1copy}, for each member of $\A'$, there is a fractional hom($K$)-tiling $h'$ of $\R[V(K_i)\cup \{u, u'\}]$ with $w(h')\geq k+\frac1{abc}$ and $h'_{\min}\geq \frac1{bc^2}$.
This gives rise to a fractional hom($K$)-tiling $h$ of $\R$ with $h_{\min}\geq \frac1{bc^2}$ and $w(h)\geq mk+\gamma\alpha_0 m/(abc)$.

To complete the proof, we need a lower bound for $m$. Recall that $\delta_1(\R)\ge (1-\left(\frac{b+c}{k}\right)^2+\r - (2bc^2 +1)\e)\binom t2$. Thus if $|U|> \frac {b+c}k t$, then $\binom{|U|}2\ge (\frac{b+c}{k})^2 \binom{t}2 - t$ and
\[ 
\delta_1(\R[U])\ge \delta_1(\R) - \binom t2 + \binom{|U|}2 > (\r - (2bc^2 +1)\e )\binom t2 - t \ge  \frac{\r}2 \binom t2,
\]
where the last inequality holds because $t\ge t_0 \ge 1/\e$.
This implies that $|E_3|> \frac13|U| \r\binom t2/2>\r \binom{|U|}3/2$, contradicting the first part of Claim~\ref{edgeswithu}. Therefore $|U|\le \frac {b+c}k t$ and $|V(\M)|=mk\ge \frac{a}k t$, which gives $m\ge \frac a{k^2}t$. The fractional hom($K$)-tiling $h$ of $\R$ thus satisfies
\[
w(h)\geq mk+ \frac{\gamma\alpha_0 m}{abc} \ge mk + \frac{\r \a_0 t}{k^2 bc} > mk+\sqrt{\e}t,
\]
as $\e \ll \gamma\a_0$, contradicting Claim \ref{gainweight}.
\end{proof}

Let $\T$ be the set of all triples $uij$, $u\in U$, $i \ne j\in [m]$ such that there are at least $\delta k^2+1$ edges $u v w$ of $\R$ with $v\in V(K_i)$ and $w\in V(K_j)$.
Since $\delta k^2 +1=\max\{a^2+2a(b+c), (a+b)^2\}+1$, by the definition of $\L_2$, $uij \in \T$ if and only if 
 $\R[V(K_i) \cup V(K_j)\cup \{u\}] \in \mathcal{L}_2(K_i,K_j,u)$. Let $\T_0$ be a largest matching in $\T$.

\begin{claim}
$|\T_0|\geq \frac{\gamma\alpha_0}{6k}t$.
\end{claim}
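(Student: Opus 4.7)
The plan is to argue by contradiction, in the same spirit as the proof of Claim~\ref{edgeswithu} part~2. Suppose $|\T_0|<\gamma\alpha_0 t/(6k)$. Set $U':=U\setminus V(\T_0)$ and $M':=[m]\setminus V(\T_0)$. Combining the hypothesis with \eqref{leftover} and the bound $|U|\le (b+c)t/k$ established inside the proof of Claim~\ref{edgeswithu} (which forces $m\ge at/k^2$), one checks that $|U'|/|U|$ and $|M'|/m$ are both $1-o_\gamma(1)$. Maximality of $\T_0$ forces: for every $u\in U'$ and every pair of distinct $i,j\in M'$, the triple $(u,i,j)$ is not in $\T$, so the link of $u$ in $\R$ contains at most $\delta k^2$ edges spanning $V(K_i)$ and $V(K_j)$.

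First, I would estimate $|E_1^{\mathrm{cross}}|$, the number of edges of $\R$ having exactly one vertex in $U$ and the other two in distinct clusters of $\M$, from two sides. A degree-sum lower bound, together with Claim~\ref{edgeswithu}'s bounds on $|E_3|$ and $|E_2|$ and the inequality $|U|m\binom{k}{2}\le |U|tk/2$, yields
\[
|E_1^{\mathrm{cross}}| \ge |U|(\delta+\gamma/2)\binom{t}{2} - 3|E_3| - 2|E_2| - |U|m\binom{k}{2}.
\]
Sorting the triples $(u,\{i,j\})$ by their edge count $e_{ij}(u)$ gives the matching upper bound
\[
|E_1^{\mathrm{cross}}| \le |\T|\,k^2 + \bigl(|U|\tbinom{m}{2}-|\T|\bigr)\delta k^2.
\]
Combining them and using $mk=t-|U|$, one obtains (via the identity $(\delta+\gamma/2)\binom{t}{2}-\delta\binom{m}{2}k^2=\tfrac{\delta}{2}|U|(2t-|U|)+\tfrac{\gamma}{4}t^2+O(t)$, and choosing the constant in the $|E_3|$ bound small enough) a lower bound of the shape $|\T|\ge \Omega(\alpha_0\gamma t^3/k^2)$.

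Next I would extract a matching. Each triple in $\T$ consists of one vertex of $U$ and an unordered pair in $[m]$, so the maximum vertex degree of $\T$ (viewed as a $3$-uniform hypergraph) is at most $\max\{\binom{m}{2},|U|(m-1)\}\le (b+c)t^2/k^2$. The standard inequality $\nu(\T)\ge |\T|/(3\Delta(\T))$ (by the fractional relaxation plus LP-rounding for $3$-uniform hypergraphs) then produces $|\T_0|\ge \gamma\alpha_0 t/(6k)$, contradicting the hypothesis.

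The hard part will be tracking constants tightly. The precise coefficient $1/(6k)$ relies on (a)~using $|U|\le (b+c)t/k$ to enlarge the LHS of the key inequality beyond the bare $\gamma t^2/4$ surplus via the extra term $\tfrac{\delta}{2}|U|(2t-|U|)$; and (b)~the sharp bound $\Delta(\T)\le (b+c)t^2/k^2$ in the matching extraction. The smallness of the constant hidden in the $|E_3|$ bound, and the Claim~\ref{edgeswithu} bound on $|E_2|$, are needed only to absorb error terms.
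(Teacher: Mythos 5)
Your outline follows the paper's two-step strategy: a degree-sum estimate gives a lower bound on $|\T|$, and then maximality of $\T_0$ converts this into a lower bound on $|\T_0|$. The degree-sum step is essentially the paper's computation (your $|E_1^{\mathrm{cross}}|$ upper bound is a mild sharpening of the paper's inequality, since $|\T|k^2(1-\delta)+|U|\binom{m}{2}\delta k^2\le |\T|k^2+|U|\binom{m}{2}\delta k^2$). Two remarks, though.

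First, the contradiction framing is vestigial: you introduce $U'=U\setminus V(\T_0)$ and $M'=[m]\setminus V(\T_0)$ and observe that every triple in $U'\times\binom{M'}{2}$ misses $\T$, but this observation is never used in the subsequent estimates — both the $|E_1^{\mathrm{cross}}|$ bounds and the matching extraction are phrased in terms of $|\T|$ and $\Delta(\T)$, not in terms of $U'$ and $M'$. In fact the paper's proof (and, in substance, yours) is a direct argument, not a contradiction.

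Second, and more seriously, the constant tracking in your matching-extraction step does not reach $\tfrac{1}{6k}$. The lower bound $|\T|\ge \tfrac{\gamma}{2k^2}\binom{t}{2}|U|$ is right, but with the generic inequality $|\T_0|\ge |\T|/(3\Delta(\T))$ and your proposed $\Delta(\T)\le (b+c)t^2/k^2$ one gets only
\[
|\T_0|\ \ge\ \frac{\tfrac{\gamma}{2k^2}\binom{t}{2}|U|}{3\cdot\tfrac{(b+c)t^2}{k^2}}\ \approx\ \frac{\gamma |U|}{12(b+c)}\ \ge\ \frac{\gamma\alpha_0 t}{24(b+c)},
\]
and since $a\le k/3$ forces $b+c\ge 2k/3 > k/4$, this is strictly smaller than $\tfrac{\gamma\alpha_0 t}{6k}$ for every admissible $(a,b,c)$ — so $(b+c)t^2/k^2$ is not the "sharp" bound here. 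The bound that does recover the advertised constant is the AM--GM estimate $|U|(m-1)< (mk)|U|/k \le (mk+|U|)^2/(4k) \approx t^2/(4k)$, yielding $\Delta(\T)\le t^2/(4k)$ and hence $|\T_0|\ge \gamma|U|/(3k)\ge \gamma\alpha_0 t/(6k)$. Alternatively, the paper avoids the generic $|\T|/(3\Delta)$ bound altogether and exploits the tripartite structure of $\T$: each of the $2|\T_0|$ cluster-vertices of $V(\T_0)$ has degree at most $(m-1)|U|\le \binom{t}{2}/(2k)$, and each of the $|\T_0|$ $U$-vertices of $V(\T_0)$ has degree at most $\binom{m}{2}\le \binom{t}{2}/k^2$, which gives $|\T|\le |\T_0|\binom{t}{2}(k+1)/k^2$ and thus $|\T_0|\ge \gamma|U|/(2k+2)\ge \gamma\alpha_0 t/(6k)$. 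I will add that your weaker bound $\Omega(\gamma\alpha_0 t/(b+c))$ would still be enough for the downstream application (contradicting Claim~\ref{gainweight}), so your route does "work" — but it does not prove the claim with the stated constant.
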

\begin{proof}
We first derive a lower bound for $|\T|$ by considering $\sum_{u\in U}{\deg_\R(u)}$.
First partition the edges of $\R$ intersecting $U$ based on whether they contain one, two or three vertices of $U$.
Next we partition the edges $u x y$ of $\R$ with $u\in U$ and $x\in V(\K_i), y\in V(\K_j)$ (i.e., the edges of $\R$ with exactly one vertex in $U$) into three classes: (1) those with $i=j$, there are at most $\binom {k}2 m|U|$ such edges; (2) those with $i\neq j$ and $uij\notin \T$, there are at most $\delta k^2 |U|\binom m2$ such edges; (3) those with $i\neq j$ and $uij\in \T$, there are at most  $k^2|\T|$ such edges. Consequently,
\[
|U|\delta_1(\R) \leq \sum_{u\in U}{\deg_\R(u)} \leq 3|E_3|+2|E_2|+\binom {k}2 m|U|+\delta k^2 |U|\binom m2 + k^2|\T|.
\]
By Claim \ref{edgeswithu}, it follows that
 \begin{align*}
|U|\delta_1(\R) &\leq |U| \left(\frac{\gamma}2 \binom{|U|}2 +\delta |U|mk+\binom {k}2 m+\delta k^2\binom m2\right) +k^2 |\T |\\
& \le |U| \left( \delta \binom{|U|}2+\delta |U|mk +\delta k^2\binom m2 +\binom {k}2 m \right) +k^2 |\T | \quad \text{as } \r \le 2\delta \\
&\le |U| \left( \delta \binom{t}{2} + \frac{kt}{2} \right) +k^2 |\T|.
\end{align*}
On the other hand, $\delta_1(\R)\geq (\delta+\gamma - (2bc^2+1)\e)\binom t2$. Using $t\ge k/\e$ and $\e \ll \r$, we derive that
 $k^2 |\T| \geq |U| \cdot \frac{\gamma}2\binom t2$ or $|\T|\geq \frac{\gamma}{2 k^2} \binom {t}2 |U|$.

By the maximality of $\T_0$, all triples of $\T$ are covered by $V(\T_0)$. Since at most $\binom m2|\T_0|$ triples of $\T$ are covered by $V(\T_0)\cap U$ and at most $2|\T_0| (m-1) |U|$ triples of $\T$ are covered by $V(\T_0)\setminus U$, it follows that $|\T|\leq 2|\T_0| (m-1) |U| + \binom m2|\T_0|$. 
Since $mk-k+|U|=t-k$, we have $(mk - k)|U|\le (t-k)^2/4 \le \binom t2/2$. Consequently,
\[
|\T| \le \frac{|\T_0|}{k} \binom{t}{2} + \frac{ |\T_0|}{k^2}\binom {t}2.
\]
Together with
$|\T|\geq \frac{\gamma}{2 k^2} \binom {t}2 |U|$, we derive that $|\T_0|\geq \frac{\gamma|U|}{2k+2}\geq \frac{\r\a_0 t}{6k}$ using \eqref{leftover}.
\end{proof}

For every $u i j \in \T_0$, Proposition \ref{propfractional} provides a fractional hom($K$)-tiling $h$ of $\R[\{u\} \cup V(K_i) \cup V(K_j)]$ with $w(h)\geq 2k+ \frac1{abc^2}$ and $h_{\min}\geq \frac1{bc^2}$. Furthermore, for every $K_i\in \M$ with $i\not\in V(\T_0)$, we assign the standard weight on $K_i$. Hence, the union of all these fractional hom($K$)-tilings gives a fractional hom($K$)-tiling of $\R$ with $h_{\min}\geq \frac1{bc^2}$ and
\[
w(h)\geq \left(2k+ \frac1{abc^2}\right)|\T_0|+k(m-2|\T_0|)= mk+\frac1{abc^2}|\T_0|\ge mk+\sqrt{\e}t,
\]
as $\e \ll \gamma\a_0$, contradicting Claim \ref{gainweight}.
This completes the proof of Lemma \ref{lem:alm_til}.
\end{proof}

\section{Concluding Remarks}

In this paper, we investigate the minimum vertex degree conditions for tiling complete 3-partite 3-graphs $K$. Our result is best possible, up to the error term $\r n^2$. We remark that in some cases (\eg, $K= K_{1, 1, t}$ for $t\ge 2$) it seems possible to remove the error term and obtain exact results -- this was done for $K_{1, 1, 2}$ in \cite{Czy14, HZ3}. In general, in order to obtain an exact result, we need to have a stability version of the almost tiling lemma and a stability version of the absorbing lemma, together with an analysis of the 3-graphs that look like extremal examples. In many cases, when analyzing extremal examples, we need to know $\ex_1(n, K)$, the vertex-degree Tur\'an number for $K$, which is a challenging question in general. (The \emph{generalized Tur\'an number} $\ex_d(n, F)$ of an $r$-graph $F$ is the smallest integer $t$ such that every $r$-graph $H$ of order $n$ with $\delta_d(H) \ge t+1$ contains a copy of $F$.)

\smallskip

When proving the lower bound of Theorem~\ref{thm:main}, we introduced the covering barrier. In general, given an $r$-graph $F$, let $c_d(n, F)$ denote the minimum integer $c$ such that every $r$-graph $H$ of order $n$ with $\delta_d(H) \ge c$ has the property that every vertex of $H$ is covered by some copy of $F$. When $F$ is a graph, it is not hard to see that $c_1(n, F) = (1 - 1/(\chi(F) - 1) + o(1))n$: the lower bound follows from the $(\chi(F) -1)$-partite Tur\'an graph, and the upper bound can be derived after applying the Regularity Lemma to $V(H)\setminus \{v\}$ for an arbitrary vertex $v$ (see \cite{Zang_thesis} for details). Given an $r$-graph $F$, trivially
\begin{equation}
\label{eq:tau}
 \ex_d(n, F) < c_d(n, F) \le t_{d}(n, F).
\end{equation}
We know that $c_1(n, F) = \ex_1(n, F) + o(n)$ for all 2-graphs $F$. 
Construction~\ref{cons:t} and Lemma~\ref{erdos} together show that $c_1(n, K_{a, b, c}) = (6 - 4\sqrt{2} + o(1)) \binom n2$ if $2\le a\le b\le c$, while Theorem~\ref{thm:main} shows that $t_1(n, K_{a, b, c}) = (6 - 4\sqrt{2} + o(1)) \binom n2$ for certain $a, b, c$ (for example, $K_{2, 3, 6}$).  This shows that the upper bound for $c_d(n ,F)$ in \eqref{eq:tau} could be asymptotically tight.
For small 3-graphs $F$, determining $c_{2}(n, F)$ seems easier than determining $ \ex_2(n, F)$ or $t_{2}(n, F)$ (known as two difficult problems) -- see \cite{FaZh15} for recent progress. 

\smallskip

Let us give the following constructions of space barriers for complete $r$-partite $r$-graph tilings for arbitrary~$r$.
\begin{construction}
Fix positive integers $i < r$ and $a_1\le \cdots \le a_r$. Let $s= a_1+\cdots + a_r$ and $H_i$ be an $n$-vertex $r$-graph with $V(H_i)=A_i\cup B_i$ and $|A_i|=(a_1+\cdots + a_i) n/s-1$ such that $E(H_i)$ consists of all $r$-tuples containing at least $i$ vertices of $A_i$.
\end{construction}

To see why $H_i$ does not contain a $K_{a_1,\dots, a_r}$-factor, we observe that for each copy of $K_{a_1,\dots, a_r}$, at least $i$ color classes of it are subsets of $A_i$, and thus at least $a_1+\cdots + a_i$ vertices of it are in $A_i$. Since $|A_i| < (a_1+\cdots + a_i) n/s$, there is no $K_{a_1,\dots, a_r}$-factor of $H_i$. Thus the minimum $d$-degree threshold for a $K_{a_1,\dots, a_r}$-factor is greater than
$\max_{i\in [r-1]}\delta_{d}(H_i)$. Note that $\delta_{d}(H_{r-d+1})=0$ since any $d$-set in ${B_{r-d+1}}$ has degree zero. Thus, $\max_{i\in [r-1]}\delta_{d}(H_i)=\max_{i\in [r-d]}\delta_{d}(H_i)$. This means that there are $r-d$ space barriers, \eg,  there is only one construction for the $(r-1)$-degree case, and there are two constructions for the vertex degree threshold in $3$-graphs.

Since our main idea of proving Lemma \ref{lem:alm_til} (see also \cite{HZ1}) is to analyze the bipartite link graph of any uncovered vertex on two existing copies of $K$ in the partial tiling, new ideas are needed to attack the general vertex degree tiling problem. On the other hand, this also suggests that it seems possible to generalize Lemma \ref{lem:alm_til} to the one of tiling $r$-partite $r$-graphs under minimum $(r-2)$-degree.

\medskip
Another direction to extend the result of this paper is to study the minimum vertex degree conditions for non-complete 3-partite 3-graphs. 
Clearly if $F$ is a spanning subgraph of $K_{a,b,c}$ then $t_1(n, F)\le t_1(n, K_{a,b,c})$.
Note that there may be more than one choice of $K_{a,b,c}$ that contains $F$ as a spanning subgraph.
One of the referees pointed out the following example, which shows that 
\begin{equation}
\label{eq:last}
t_1(n, F)< \min t_1(n, K_{a,b,c})
\end{equation}
for some $F$, where the minimum is taken over all $K_{a,b,c}$ that contain $F$ as a spanning subgraph.
Indeed, take a copy of $K_{1,2,3}$ and denote $u$ as the vertex in the vertex class of size one.
Add three new vertices $x, y, z$ and new edges $u x y$ and $u x z$, and denote the resulting graph by $F$.
Then $K_{1,4,4}$ and $K_{1,3,5}$ are the only choices of $K_{a,b,c}$ that contain $F$ as a spanning subgraph.
By Theorem~\ref{thm:main}, $t_1(n, K_{1,4,4}) = (\frac49 + o(1))\binom{n}{2}$ and $t_1(n, K_{1,3,5}) = (\frac12 + o(1))\binom{n}{2}$.
On the other hand, $K_{2,7,9}$ has a perfect $F$-tiling. By Theorem~\ref{thm:main}, we have $t_1(n, F)\le t_1(n, K_{2,7,9}) = (6 - 4\sqrt{2} + o(1)) \binom n2$, which implies \eqref{eq:last}.

\section*{Acknowledgment}
We thank Richard Mycroft for valuable discussion.
We also thank two referees for their careful reading and detailed comments that improve the presentation of the paper. 
In particular, we are grateful to one referee for showing us the example that implies \eqref{eq:last}.

\bibliographystyle{plain}
\bibliography{Jan2014}

\begin{thebibliography}{10}

\bibitem{AFHRRS}
N.~Alon, P.~Frankl, H.~Huang, V.~R{\"o}dl, A.~Ruci{\'n}ski, and B.~Sudakov.
\newblock Large matchings in uniform hypergraphs and the conjecture of {E}rd{\H
  o}s and {S}amuels.
\newblock {\em J. Combin. Theory Ser. A}, 119(6):1200--1215, 2012.

\bibitem{AY96}
N.~Alon and R.~Yuster.
\newblock {$H$}-factors in dense graphs.
\newblock {\em J. Combin. Theory Ser. B}, 66(2):269--282, 1996.

\bibitem{BHS}
E.~Bu{\ss}, H.~H{\`a}n, and M.~Schacht.
\newblock Minimum vertex degree conditions for loose {H}amilton cycles in
  3-uniform hypergraphs.
\newblock {\em J. Combin. Theory Ser. B}, 103(6):658--678, 2013.

\bibitem{Czy14}
A.~Czygrinow.
\newblock Minimum degree condition for {$C_4$}-tiling in 3-uniform hypergraphs.
\newblock {\em unpublished manuscript}.

\bibitem{CDN}
A.~Czygrinow, L.~DeBiasio, and B.~Nagle.
\newblock Tiling 3-uniform hypergraphs with ${K}_4^3-2e$.
\newblock {\em Journal of Graph Theory}, 75(2):124--136, 2014.

\bibitem{CzKa}
A.~Czygrinow and V.~Kamat.
\newblock Tight co-degree condition for perfect matchings in 4-graphs.
\newblock {\em Electron. J. Combin.}, 19(2):Paper 20, 16, 2012.

\bibitem{erdos}
P.~Erd\H{o}s.
\newblock On extremal problems of graphs and generalized graphs.
\newblock {\em Israel Journal of Mathematics}, 2(3):183--190, 1964.

\bibitem{FaZh15}
Victor Falgas-Ravry and Yi~Zhao.
\newblock Codegree thresholds for covering 3-uniform hypergraphs.
\newblock {\em SIAM J. Discrete Math.}, 30(4):1899--1917, 2016.

\bibitem{FuZh14}
Z.~F\"uredi and Y.~Zhao.
\newblock On the size of shadows under minimum degree condition.
\newblock {\em manuscript}.

\bibitem{HaSz}
A.~Hajnal and E.~Szemer{\'e}di.
\newblock Proof of a conjecture of {P}. {E}rd{\H o}s.
\newblock In {\em Combinatorial theory and its applications, {II} ({P}roc.
  {C}olloq., {B}alatonf\"ured, 1969)}, pages 601--623. North-Holland,
  Amsterdam, 1970.

\bibitem{HPS}
H.~H\`an, Y.~Person, and M.~Schacht.
\newblock On perfect matchings in uniform hypergraphs with large minimum vertex
  degree.
\newblock {\em SIAM J. Discrete Math}, 23:732--748, 2009.

\bibitem{Han14_poly}
J.~Han.
\newblock Decision problem for perfect matchings in dense $k$-uniform
  hypergraphs.
\newblock {\em Trans. Amer. Math. Soc., accepted}.

\bibitem{Han16_mat}
J.~Han.
\newblock Perfect matchings in hypergraphs and the erd{\H{o}}s matching
  conjecture.
\newblock {\em SIAM J. Discrete Math}, 30:1351--1357, 2016.

\bibitem{HZ1}
J.~Han and Y.~Zhao.
\newblock Minimum vertex degree threshold for loose {H}amilton cycles in
  3-uniform hypergraphs.
\newblock {\em J. Combin. Theory Ser. B}, 114:70--96, 2015.

\bibitem{HZ3}
J.~Han and Y.~Zhao.
\newblock Minimum vertex degree threshold for {$\mathcal{C}^3_4$}-tiling.
\newblock {\em J. Graph Theory}, 79(4):300--317, 2015.

\bibitem{KM1}
P.~Keevash and R.~Mycroft.
\newblock A geometric theory for hypergraph matching.
\newblock {\em Mem. Amer. Math. Soc.}, 233(1098):vi+95, 2015.

\bibitem{Khan1}
I.~Khan.
\newblock Perfect matchings in 3-uniform hypergraphs with large vertex degree.
\newblock {\em SIAM J. Discrete Math.}, 27(2):1021--1039, 2013.

\bibitem{Khan2}
I.~Khan.
\newblock Perfect matchings in 4-uniform hypergraphs.
\newblock {\em J. Combin. Theory Ser. B}, 116:333--366, 2016.

\bibitem{KSS-AY}
J.~Koml{\'o}s, G.~S{\'a}rk{\"o}zy, and E.~Szemer{\'e}di.
\newblock Proof of the {A}lon-{Y}uster conjecture.
\newblock {\em Discrete Math.}, 235(1-3):255--269, 2001.
\newblock Combinatorics (Prague, 1998).

\bibitem{KO}
D.~K\"uhn and D.~Osthus.
\newblock Loose {Hamilton} cycles in 3-uniform hypergraphs of high minimum
  degree.
\newblock {\em Journal of Combinatorial Theory. Series B}, 96(6):767--821,
  2006.

\bibitem{KuOs-survey}
D.~K{\"u}hn and D.~Osthus.
\newblock Embedding large subgraphs into dense graphs.
\newblock In {\em Surveys in combinatorics 2009}, volume 365 of {\em London
  Math. Soc. Lecture Note Ser.}, pages 137--167. Cambridge Univ. Press,
  Cambridge, 2009.

\bibitem{KuOs09}
D.~K{\"u}hn and D.~Osthus.
\newblock The minimum degree threshold for perfect graph packings.
\newblock {\em Combinatorica}, 29(1):65--107, 2009.

\bibitem{KOT}
D.~K{\"u}hn, D.~Osthus, and A.~Treglown.
\newblock Matchings in 3-uniform hypergraphs.
\newblock {\em J. Combin. Theory Ser. B}, 103(2):291--305, 2013.

\bibitem{LM2}
A.~Lo and K.~Markstr{\"o}m.
\newblock Minimum codegree threshold for {$(K_4^3-e)$}-factors.
\newblock {\em J. Combin. Theory Ser. A}, 120(3):708--721, 2013.

\bibitem{LM1}
A.~Lo and K.~Markstr\"om.
\newblock {$F$}-factors in hypergraphs via absorption.
\newblock {\em Graphs and Combinatorics}, 31(3):679--712, 2015.

\bibitem{My14talk}
R.~Mycroft.
\newblock Packing in hypergraphs.
\newblock Presentation at the Extremal Combinatorics workshop of ICMS,
  Edinburgh, UK, 2014.

\bibitem{My14}
R.~Mycroft.
\newblock Packing {$k$}-partite {$k$}-uniform hypergraphs.
\newblock {\em J. Combin. Theory Ser. A}, 138:60--132, 2016.

\bibitem{RRS06}
V.~R\"odl, A.~Ruci\'nski, and E.~Szemer\'edi.
\newblock A {D}irac-type theorem for 3-uniform hypergraphs.
\newblock {\em Combinatorics, Probability and Computing}, 15(1-2):229--251,
  2006.

\bibitem{RRS09}
V.~R{\"o}dl, A.~Ruci{\'n}ski, and E.~Szemer{\'e}di.
\newblock Perfect matchings in large uniform hypergraphs with large minimum
  collective degree.
\newblock {\em J. Combin. Theory Ser. A}, 116(3):613--636, 2009.

\bibitem{Sze}
E.~Szemer{\'e}di.
\newblock Regular partitions of graphs.
\newblock In {\em Probl\`emes combinatoires et th\'eorie des graphes ({C}olloq.
  {I}nternat. {CNRS}, {U}niv. {O}rsay, {O}rsay, 1976)}, volume 260 of {\em
  Colloq. Internat. CNRS}, pages 399--401. CNRS, Paris, 1978.

\bibitem{TrZh13}
A.~Treglown and Y.~Zhao.
\newblock Exact minimum degree thresholds for perfect matchings in uniform
  hypergraphs {II}.
\newblock {\em J. Combin. Theory Ser. A}, 120(7):1463--1482, 2013.

\bibitem{TrZh16}
Andrew Treglown and Yi~Zhao.
\newblock A note on perfect matchings in uniform hypergraphs.
\newblock {\em Electron. J. Combin.}, 23(1):Paper 1.16, 14, 2016.

\bibitem{Zang_thesis}
C.~Zang.
\newblock {\em Matchings and Tilings in hypergraphs}.
\newblock PhD thesis, Georgia State University, 2016.

\bibitem{zsurvey}
Y.~Zhao.
\newblock Recent advances on {D}irac-type problems for hypergraphs.
\newblock In {\em Recent trends in combinatorics}, volume 159 of {\em IMA Vol.
  Math. Appl.}, pages 145--165. Springer, [Cham], 2016.

\end{thebibliography}

\end{document}